\documentclass[10pt,twoside,a4paper]{article}

%% Search for "ADAPT" to find the places where to insert paper specific informations
%%
%% Before submitting the article, we may extract the used references from our database into
%% a separate bib-file (filename_biber.bib) and use this bib-file for generating the references.
%% To this end, the command
%%    make final
%% modifies this tex-file by removing all lines with "%FINALREMOVE" and replaces "%FINALADDBIB" by "\addbibresource{filename_biber.bib}"
%% and stores this modified file as "filename_final.tex" which should compile together with "filename_biber.bib" without our bibliography database.

%%%%%%%%%%%%%%%%%%%%%%%%%%%%%%
%%% Packages
%%%%%%%%%%%%%%%%%%%%%%%%%%%%%%

%%%%%%%%%%%%%%%%%%%%%%%
%%% inputenc: To allow for unicode characters
%%%%%%%%%%%%%%%%%%%%%%%
\usepackage[utf8]{inputenc}
%%% End: inputenc %%%

%%%%%%%%%%%%%%%%%%%%%%%
%%% fontenc: use a T1 font
%%%%%%%%%%%%%%%%%%%%%%%
\usepackage[T1]{fontenc}
%%% End: inputenc %%%

%%%%%%%%%%%%%%%%%%%%%%%
%%% amsmath: AMS latex extensions
%%%%%%%%%%%%%%%%%%%%%%%
\usepackage{amsmath}
\numberwithin{equation}{section}
%%% End: amsmath %%%

%%%%%%%%%%%%%%%%%%%%%%%
%%% amssymb: Provides the symbol \square
%%%%%%%%%%%%%%%%%%%%%%%
\usepackage{amssymb}
%%% End: amssymb %%%

%%%%%%%%%%%%%%%%%%%%%%%%%%%%%%%%%%%%%%%%%%%%%%%%%%
%%% Microtype: enables better font spacing
%%%%%%%%%%%%%%%%%%%%%%%%%%%%%%%%%%%%%%%%%%%%%%%%%%
\usepackage{microtype}
%%% end

%%%%%%%%%%%%%%%%%%%%%%%%%%%%%%%%%%%%%%%%%%%%%%%%%%
%%% biblatex: configure bibliography
%%%%%%%%%%%%%%%%%%%%%%%%%%%%%%%%%%%%%%%%%%%%%%%%%%
\usepackage[backend=biber,maxnames=15]{biblatex}
\addbibresource{VesElbMinKraDre23.bib}

\DefineBibliographyStrings{english}{%
	backrefpage = {cited on page},
	backrefpages = {cited on pages},
}

% write the title always in quotes
\DeclareFieldFormat[report]{title}{``#1''}
\DeclareFieldFormat[book]{title}{``#1''}
\AtEveryBibitem{\clearfield{url}}
\AtEveryBibitem{\clearfield{note}}
%%% end

%%%%%%%%%%%%%%%%%%%%%%%%%%%%%%%%%%%%%%%%%%%%%%%%%%
%%% graphicx: Embed images
%%%%%%%%%%%%%%%%%%%%%%%%%%%%%%%%%%%%%%%%%%%%%%%%%%
\usepackage{graphicx}
\graphicspath{{images/}}
%%% end

%%%%%%%%%%%%%%%%%%%%%%%%%%%%%%%%%%%%%%%%%%%%%%%%%%
%%% tikz: Drawing library
%%%%%%%%%%%%%%%%%%%%%%%%%%%%%%%%%%%%%%%%%%%%%%%%%%
\usepackage{tikz}
\usepackage{pgfplots}
\pgfplotsset{compat=newest}
%%% end

%%%%%%%%%%%%%%%%%%%%%%%%%%%%%%
%%% Title and Author
%%%%%%%%%%%%%%%%%%%%%%%%%%%%%%
%% ADAPT
\title{Quantitative Parameter Reconstruction from Optical Coherence Tomographic Data}

\author{Leopold Veselka$^1$\\{\footnotesize\href{mailto:leopold.veselka@univie.ac.at}{leopold.veselka@univie.ac.at}}
\and Peter Elbau$^1$\\{\footnotesize\href{mailto:peter.elbau@univie.ac.at}{peter.elbau@univie.ac.at}}
\and Leonidas Mindrinos$^3$\\{\footnotesize\href{mailto:leonidas.mindrinos@aua.gr}{leonidas.mindrinos@aua.gr}}
\and Lisa Krainz$^2$\\{\footnotesize\href{mailto:lisa.krainz@meduniwien.ac.at}{lisa.krainz@meduniwien.ac.at}}
\and Wolfgang Drexler$^2$\\{\footnotesize\href{mailto:wolfgang.drexler@meduniwien.ac.at}{wolfgang.drexler@meduniwien.ac.at}}}
\date{}
%%% End

%%%%%%%%%%%%%%%%%%%%%%%%%%%%%%%%%%%%%%%%%%%%%%%%%%
%%% hyperref
%%%%%%%%%%%%%%%%%%%%%%%%%%%%%%%%%%%%%%%%%%%%%%%%%%
\usepackage[pdftex,colorlinks=true,linkcolor=blue,citecolor=green,urlcolor=blue,bookmarks=true,bookmarksnumbered=true]{hyperref}
%% ADAPT
\hypersetup
{
    pdfauthor={Leopold Veselka, Lisa Krainz, Leonidas Mindrinos, Wolfgang Drexler, Peter Elbau},
    pdfsubject={},
    pdftitle={Quantitative Parameter Reconstruction from Optical Coherence Tomography},
    pdfkeywords={}
}

%%% end

%%%%%%%%%%%%%%%%%%%%%%%%%%%%%%%%%%%%%%%%%%%%%%%%%%
%%% ntheorem, aliascnt: for theorem environments
%%%%%%%%%%%%%%%%%%%%%%%%%%%%%%%%%%%%%%%%%%%%%%%%%%
\usepackage[hyperref,amsmath,thmmarks]{ntheorem}
\usepackage{aliascnt}

\newtheorem{lemma}{Lemma}[section]

\newaliascnt{proposition}{lemma}
\newtheorem{proposition}[proposition]{Proposition}
\aliascntresetthe{proposition}

\newaliascnt{corollary}{lemma}

\aliascntresetthe{corollary}

\newaliascnt{theorem}{lemma}
\newtheorem{theorem}[theorem]{Theorem}
\aliascntresetthe{theorem}

\newaliascnt{remark}{lemma}
\newtheorem{remark}[remark]{Remark}
\aliascntresetthe{remark}

\theorembodyfont{\normalfont}
\newaliascnt{definition}{lemma}

\aliascntresetthe{definition}

\newaliascnt{assumption}{lemma1}
\newtheorem{assumption}[assumption]{Assumption}
\aliascntresetthe{assumption}

\newaliascnt{method}{lemma}

\aliascntresetthe{method}

\theoremstyle{nonumberplain}
\theoremseparator{:}
\theoremheaderfont{\normalfont\itshape}

\theoremsymbol{\ensuremath{\square}}
\newtheorem{proof}{Proof}
%%% end

%%%%%%%%%%%%%%%%%%%%%%%%%%%%%%%%%%%%%%%%%%%%%%%%%%
%%% geometry: setting pagewidth
%%%%%%%%%%%%%%%%%%%%%%%%%%%%%%%%%%%%%%%%%%%%%%%%%%
\usepackage[a4paper,centering,bindingoffset=0cm,marginpar=2cm,margin=2.5cm]{geometry}
%%% end

%%%%%%%%%%%%%%%%%%%%%%%%%%%%%%%%%%%%%%%%%%%%%%%%%%
%%% titlesec: configure layout of sections
%%%%%%%%%%%%%%%%%%%%%%%%%%%%%%%%%%%%%%%%%%%%%%%%%%
\usepackage[pagestyles]{titlesec}
\titleformat{\section}[block]{\large\sc\filcenter}{\thesection.}{0.5ex}{}[]
\titleformat{\subsection}[runin]{\bf}{\thesubsection.}{0.5ex}{}[.]

% Define the pagestyle
%% ADAPT
\newpagestyle{headers}%
{%
	\headrule
	\sethead%
		[\footnotesize\thepage]%
		[\footnotesize\sc Authors]%
		[]%
		{}%
		{\footnotesize\sc Title}%
		{\footnotesize\thepage}
	\setfoot{}{}{}
}
%\pagestyle{headers}
%%% end

%%%%%%%%%%%%%%%%%%%%%%%%%%%%%%%%%%%%%%%%%%%%%%%%%%
%%% caption: configure layout of figure captions
%%%%%%%%%%%%%%%%%%%%%%%%%%%%%%%%%%%%%%%%%%%%%%%%%%
\usepackage[font=footnotesize,format=plain,labelfont=sc,textfont=sl,width=0.75\textwidth,labelsep=period]{caption}
%%% end

\usepackage{subcaption}

\usepackage{siunitx}

%%%%%%%%%%%%%%%%%%%%%%%%%%%%%%%%%%%%%%%%%%%%%%%%%%
%%% configure the paragraph layout
%%%%%%%%%%%%%%%%%%%%%%%%%%%%%%%%%%%%%%%%%%%%%%%%%%
\postdisplaypenalty= 1000
\widowpenalty = 1000
\clubpenalty = 1000
\displaywidowpenalty = 1000
\setlength{\parindent}{0pt}
\setlength{\parskip}{1ex}

%%% end

%%%%%%%%%%%%%%%%%%%%%%%%%%%%%%%%%%%%%%%%%%%%%%%%%%
%%% dsfont: symbols for number spaces
%%%%%%%%%%%%%%%%%%%%%%%%%%%%%%%%%%%%%%%%%%%%%%%%%%
\usepackage{dsfont}
\usepackage{bm}
\newcommand{\N}{\mathds{N}}

\newcommand{\R}{\mathds{R}}
\newcommand{\C}{\mathds{C}}
%%% end

%%%%%%%%%%%%%%%%%%%%%%%%%%%%%%%%%%%%%%%%%%%%%%%%%%
%%% Declare Math Operators
%%%%%%%%%%%%%%%%%%%%%%%%%%%%%%%%%%%%%%%%%%%%%%%%%%
% Real part
\let\RE\Re
\let\Re=\undefined
\DeclareMathOperator{\Re}{\RE e}
% Imaginary part
\let\IM\Im
\let\Im=\undefined
\DeclareMathOperator{\Im}{\IM m}
%\DeclareMathOperator{\u}{\underline{m}}
% Support
\DeclareMathOperator{\supp}{supp}
%divergence

\DeclareMathOperator{\argmin}{argmin}

% Sign

\DeclareMathOperator{\sign}{sgn}
\DeclareMathOperator{\sincs}{si}

% Absolute Value

\newcommand{\un}[1]{\underline{#1}}
% Norm

% Set

% Inner Product

%%% end

\newcommand{\lk}[1]{\underline{#1}}
\newcommand{\bk}[1]{\bar{#1}}

%%%%%%%%%%%%%%%%%%%%%%%%%%%%%%%%%%%%%%%%%%%%%%%%%%
%%% Define some symbols
%%%%%%%%%%%%%%%%%%%%%%%%%%%%%%%%%%%%%%%%%%%%%%%%%%
% Euler number
\newcommand{\e}{\mathrm e}
% Imaginary unit (command \i is stored in \ii)

% Derivative

%%% end

%%%%%%%%%%%%%%%%%%%%%%%%%%%%%%%%%%%%%%%%%%%%%%%%%%
%%% Adding comments
%%%%%%%%%%%%%%%%%%%%%%%%%%%%%%%%%%%%%%%%%%%%%%%%%%
%%% end

\usepackage{xcolor}
\usepackage{chngcntr}
\counterwithout{equation}{section}
%%% end

%%%%%%%%%%%%%%%%%%%%%%%%%%%%%%
%%% Document
%%%%%%%%%%%%%%%%%%%%%%%%%%%%%%
\begin{document}
	
\maketitle
\thispagestyle{empty}
\begin{center}
\hspace*{5em}
\parbox[t]{12em}{\footnotesize
\hspace*{-1ex}$^1$University of Vienna\\
Oskar-Morgenstern-Platz 1\\
A-1090 Vienna, Austria}\hfil 
\parbox[t]{17em}{\footnotesize
\hspace*{-1ex}$^2$Medical University of Vienna\\
Waehringer Guertel 18-20\\
A-1090 Vienna, Austria}

\parbox[t]{17em}{\footnotesize
\hspace*{-1ex}$^3$Agricultural University of Athens \\
Department of Natural Resources, Development and Agricultural Engineering\\
Athens, Greece}
\end{center}

\abstract{ 
Quantitative tissue information, like the light scattering properties, is considered as a key player in the detection of cancerous cells in medical diagnosis. A promising method to obtain these data is optical coherence tomography (OCT). In this article, we will therefore discuss the refractive index reconstruction from OCT data, employing a Gaussian beam based forward model. 
We consider in particular samples with a layered structure, meaning that the refractive index as a function of depth is well approximated by a piece-wise constant function. For the reconstruction, we present a layer-by-layer method where in every step the refractive index is obtained via a discretized least squares minimization. For an approximated form of the minimization problem, we present an existence and uniqueness result. \\
The applicability of the proposed method is then verified by reconstructing refractive indices of layered media from both simulated and experimental OCT data.            
}
\section{Introduction}\label{sec:introduction}

Optical coherence tomography is a non-invasive, high-precision imaging modality with micrometer resolution based on the interferometric measurement \cite{FerMenWer88} of backscattered light. Since its invention in the 1990's \cite{HuaSwaLinSchuStiCha91}, different optical coherence tomographic systems, see \cite{DreFuj08}, for example, have been developed. All share the basic working principle, which can be described as follows: 

Laser light in the near infrared region is sent into the system. A beamsplitter then separates the light in two parts. One is directed into the sample arm and the second into the reference arm. Depending on the type of the system, the object in the sample arm is then either raster scanned, meaning that depth profiles on a lateral grid across the object are obtained, or illuminated at once. In both cases, the backscattered light by the object is then coupled into the system again and transported to the detector. There, the combined intensity of the light from the sample and the reference arm, where the light is backreflected by a perfect mirror, is detected.   
   
Due to its (ultra-)high resolution property and its very fast acquisition rate of up to more than 50000 raster scans per second, OCT, together with its adaptions and its extensions like polarization-sensitive OCT (PS-OCT) \cite{Bau17,BoeMilGemNel97,HeeHuaSwaFuj92}, optical coherence angiography \cite{KhaMehKhaIqbRas17,LiuDre19} or optical coherence elastography (OCE) \cite{KenKenSam14}, has become an outstanding technology in imaging of biological tissues, especially in the field of ophthalmology.

Nowadays, the structural information which is provided by such an OCT system is already successfully used for medical diagnosis. However, the obtained information is mainly of qualitative nature. For medical diagnosis it is worth to have supplementary quantitative information, like the optical (scattering) properties of the object of interest, which are considered as future key makers in the field of medical diagnosis. 

The quantification of such physical properties, in our case the refractive index, from OCT data, commonly represents an ill-posed problem, which is interesting from a mathematical perspective. This is the scope of this article. In its most general form, the parameter quantification in (Fourier domain based) OCT is a severely ill-posed problem dealing with the reconstruction of the optical properties, which we describe by the refractive index which is a function of space (three dimensions) and wavenumber (one dimension), from maximally three-dimensional (two spatial dimensions and one for the wavenumber) measurement data \cite{ElbMinSch15}.
The insufficient amount of available data has driven the need for modeling possibilities of the direct problem, which are commonly expressed by assumptions on the object. A general overview of existing modeling possibilities for the direct problem in OCT is presented, for example, in \cite{ElbMinSch15} or \cite{ElbMinVes23}. 

The inverse problem in OCT can be seen as an inverse electromagnetic scattering problem, especially if one considers the ideal case of having direct access to the backscattered sample. It has been treated, also non-specifically related to OCT, in a theoretical context, for example, in \cite{ColKre98} and under a number of (simplifying) assumptions, like having a weakly scattering medium, in \cite{MacArrMun23}.    

The aim of this article is to provide a reconstruction method from experimental data obtained by a swept-source OCT system, a specific form of Fourier domain OCT, which is based on a raster scanning of the object with focused and practically monochromatic laser light centered at multiple wavelengths. Due to this raster scanning process, the (inverse) scattering problem can be restricted to the narrow illuminated region, where the refractive index is typically considered only as a depth dependent function.       

One of the very first attempts on reconstructing a depth dependent refractive index from an OCT depth profile by Fourier transform has been presented in \cite{Fer96}. The reconstruction under the, in this case, crucial assumption of a weakly scattering object (so that the Born approximation of the electromagnetic wave equation is valid) has also been treated in \cite{RalMarCarBop06}, for example, where additionally a Gaussian beam model for the incident laser light has been employed.

Another assumption on the object, which substantially simplifies the mathematical model, is that the object shows a multi-layer structure. This case typically can be identified in OCT images of the human retina \cite{HeeIzaSwaHuaSchu95} and in human skin imaging. For such a multi-layer structure, at least locally in the illuminated spot, the depth-dependent refractive index can be simplified to a piecewise constant function. The corresponding inverse problem has been examined in \cite{BruCha05,ElbMinVes21,TomWan06} and in \cite{ElbMinVes20} where additional inclusions have been discussed. 

In this article, we consider the multi-layer structure of the sample with a Gaussian beam model, presented in \cite{VesKraMinDreElb21}, which resembles more efficiently the laser light illumination. Within this context, we treat the inverse problem of quantifying the refractive index by using a layer-by-layer method, where in each step the reconstruction concentrates on a pair of parameters, the refractive index and the width of the layer. 

Hereby, the reconstruction is formulated as an $\ell^2$-minimization problem in the Fourier domain between the Gaussian beam prediction model and the data. That is, we match the absolute values of both in order to avoid high-frequency components which are typically present. In \cite{ElbMinVes21}, the reconstruction within this setting for the case of a plane wave incident field only gave a unique solution when artificially adding plausible bounds on each parameter. We can omit these bounds in our analysis on the existence and uniqueness of solutions of the present minimization problem.

The analysis itself is complicated by the compact form of the Gaussian model. There the directional dependent reflection coefficients containing information about the refractive indices hinder a deeper investigation. For this reason, the analysis is divided into two parts: Firstly, we consider an approximation of the original model, where the reflection coefficient is assumed to be directional independent. We show that the modeling error caused by this approximation is small and bounded from above. For the approximation we then show the existence and the uniqueness of a solution under the strong but necessary condition that the data is in the range of the prediction. 
The distance in every step is obtained by the biggest overlap of the prediction model with the data once the refractive index has been calculated.  

In order to support our arguments, we show the functionality of the proposed method by reconstructing the refractive indices from both simulated and experimental data. While reconstructions from simulated data, partially under very theortic assumptions, have been discussed several times, the reconstruction using experimental data has been treated fairly rare.   
The assumption on the object to have layered structure is a very strong assumption which turns the inverse problem to the well-posed side. Hence, regularization methods which are typically related to problems with noise, are not considered in this context. 

The outline of this article is as follows: In \autoref{sec:mathmodel} and \autoref{sec:PhysParam} the major parts of the Gaussian beam model introduced in \cite{VesKraMinDreElb21} are summarized. The section ends by giving an explicit representation of a single OCT depth profile. Based on this representation, we introduce the inverse problem which we want to treat layer-by-layer. We discuss this in \autoref{sec:layer}. That the actual reconstruction can be formulated as a layer-by-layer method is justified in \autoref{sec:approximation} and its reformulation as $\ell^2$-minimization problem is shown in \autoref{sec:def_minimization}. Before we present in \autoref{sec:inverse_EaU} a characterization of existence and uniqueness of the solutions to an approximated version of the original problem, we show that the approximation error between the functionals is almost negligible. We conclude the section on the refractive index quantification by presenting a purely theoretical method to also retrieve uniqueness. The analysis on the minimum with respect to the width is presented in \autoref{sec:width} in form of purely visual arguments by showing the behavior of the minimization functional with respect to the width. There, we consider simulated data with and without noise. 
Finally, in \autoref{sec:numerics} the results of the reconstruction from simulated and experimental data of a three-layer object are presented.

\section{OCT Forward Model}
\label{sec:mathmodel}
The image formation in standard OCT systems nowadays is based on raster scanning the object of interest. This means that strongly focused laser light is directed by movable galvanometric mirrors to spots located on an imaginary lateral (horizontal) grid on the object's top surface. For every position then an OCT measurement, a depth profile along the vertical axis through the spot showing the microstructure inside the object, is recorded.   
     
Latest publications in the field of OCT have pointed out that modeling such focused laser light by a single plane wave is not sufficient enough in capturing several system relevant aspects. The huge influence of the focus and the beam width, the diameter of the laser in the focus spot, for example, are a few modeling aspects which are lost when simulating the measurements of an OCT system using single plane waves. For this reason we base our analysis on a Gaussian beam model for the incident field, which is said to model laser light in an accurate way. Hereby, we adapt for our purpose the forward model which has been presented in \cite{VesKraMinDreElb21}. There, a model is provided which includes all relevant system parameters and allows a good approximation of an actual measurement. 
 
In this section we summarize briefly the main parts of an OCT system, namely the light scattering, the fiber coupling and finally the measurement. Hereby, we use the fact that the object is raster scanned, which allows us to model in the following the light propagation for each raster scan independently. At the end of this section we provide an equation for a single raster scan (A-scan) data. This will be used as the data for the corresponding inverse problem. 

The main parts are specifically modeled for a swept-source OCT system \cite{DreFuj08}, since the data used for the numerical experiments in \autoref{sec:numerics} has been obtained by this system type. Within such a system almost monochromatic laser light centered at multiple wavelengths in a certain spectrum is used for the illumination of the object. For each of these wavelengths the scattered light is detected by a narrow-bandwidth interferometer. This finally gives a complete measurement for each wavelength within the spectrum.  

\subsection*{Field of incidence}
We model the light propagation from the point on where the laser light has entered the sample or the reference arm respectively. The presentation restricts to the modeling of the sample arm. The  reference arm, which is modeled analogously with the difference that the object is a perfectly reflecting mirror, is considered as a special case.  

For each raster scan we model the incident illumination as an electromagnetic wave $\hat E:\R\times\R^3\to\C^3,$ a function of the wavenumber $k\in\R$ and the spatial coordinate $x\in\R^3,$ satisfying the system of Maxwell's equations in vacuum, that is (in Fourier space)  
\begin{equation}
\label{eq:helmholtz}
\Delta  \hat E(k,x) + k^2 n_0^2 \hat E(k,x) = 0\quad\text{and}\quad\nabla\cdot \hat E(k,x)=0 \quad \text{for all}\quad  k\in\R,\ x\in\R^3,
\end{equation}
where $n_0 = 1$ is the free-space refractive index. Since we consider a swept-source OCT system, $ \hat E$ is assumed to fulfill the support condition
\[
\supp \hat E(\,\cdot\, ,x) \subset (k_0-\epsilon_0,k_0+\epsilon_0)\quad\text{for all}\quad x\in\R^3,
\]    
for a sufficiently small parameter $\epsilon_0 > 0.$ The experiment is repeated for multiple wavelengths $k_0$ in a spectrum $\mathcal S = [k_1,\, k_2].$
The delta-like support in wavenumber allows us to model each experiment, including the backscattering of the light by the sample, independently. This means we can solve the Helmholtz equation for the electric field for each $k\in\mathcal S$ separately. However, there is no difference to the mathematical formulation for a broadband illumination, an electric field which solves the Helmholtz equation for all $k\in\mathcal S$ simultaneously. Hence, we formulate everything simply for a broadband illumination.

In order to specify the Gaussian beam for the field of incidence, we impose additionally an initial condition at a hyperplane $\{x\in\R^3\ |\ x_3 = r_0\}$, which we locate in the focus of the beam. Hence, by $r_0\in\R$ we denote the focus position, which we always assume to be along the vertical line $\R e_3,$ where $(e_i)_{i=1}^3$ denotes the standard basis of $\R^3.$ For a function $f_k:\R^2\to \C$ with compact support in $D_{k}(0)\subset \R^2,$ the ball with radius $k$ and center zero, and a vector $\eta \in\mathbb S^1\times \{0\},$ we then write
\begin{equation}
\label{eq:incident_plane}
\mathcal F_{\bar x}\left(\hat E\right)(k,\kappa,r_0) = f_k(\kappa) \eta \quad\text{for all}\quad \kappa\in\R^2,
\end{equation}
where $\bar x = (x_1,\,x_2)$ and where 
\[
\mathcal F_{\bar x}(u)(\kappa) = \int_{\R^2} u(\bar x) e^{-i \bar x\cdot\kappa} d \bar x 
\]
denotes the two-dimensional Fourier transform with respect to the variable $\bar x$. 

We consider in the following illumination from the top only. Hence, we eliminate one major propagation direction from the obtained solution $\hat E$ \cite{VesKraMinDreElb21} to the combined problem of \autoref{eq:helmholtz} and \autoref{eq:incident_plane} and consider only downward propagating waves, that is in direction $-e_3.$ We denote this part by $E,$ which for any fixed wavenumber $k\in\mathcal S$ is then represented by       
\begin{equation}
\label{eq:solution}
 E(k,\bar x,x_3) = \frac{1}{4\pi^2} \int_{D_k(0)} g(\kappa) e^{-i \sqrt{k^2-|\kappa|^2}(x_3-r_0)} e^{i \kappa\cdot \bar x} d \kappa, \quad (\bar x,x_3)\in\R^3
\end{equation}
where
\[
g(\kappa) = \frac{1}{2} f_k(\kappa) \left(\eta - \left(\eta_3 - \frac{\kappa\cdot \bar \eta}{\sqrt{k^2-|\kappa|^2}}\right)e_3\right),\quad \bar\eta=(\eta_1,\eta_2).
\]
\begin{remark}
The incident wave in \autoref{eq:solution} represents a weighted superposition of plane waves $e^{i K\cdot x},$ where the wave vector $K$ represents the propagation direction of each plane wave. We note that every wave direction $K$ in \autoref{eq:solution} is implicitely given as a function of $\kappa,$ that is 
\begin{equation}
\label{eq:wavevector}
K(\kappa) = (\kappa,-\sqrt{k^2-|\kappa|^2}).
\end{equation}  
We suppress this dependence in the following.
\end{remark}
We complete the Gaussian beam incident field by specifying the weight function $f_k$ in the focal plane. Hereby, we use Gaussian weights only.  
\begin{assumption}
\label{as:one}
In the following, we restrict our attention to Gaussian distributions, meaning that we consider in \autoref{eq:incident_plane} functions of the form    
\[
f_k(\kappa) = e^{-|\kappa|^2 a}, \quad\text{for all}\quad \kappa\in\R^2,
\] 
where $a>0$ is such that $\|f_k - \bm\chi_{D_{k}(0)}f_k \|_{L^1(\R^2)},$ where $\bm\chi_{\mathcal U}$ is the characteristic function of a set $\mathcal U,$ is almost negligible. 
\end{assumption}
The Gaussian distribution in this case pronounces only those directions $\kappa\in D_k(0)$ which have a small radial deviation from zero, others are excluded from the integral in \autoref{eq:solution}. 
Hence, we can consider the quotient $|\kappa|/k$ as a small parameter for which the paraxial approximation expressed by the linearization of the square-root 
\begin{equation}
\label{eq:paraxial}
\sqrt{k^2-|\kappa|^2} = k - \frac{|\kappa|^2}{2k} + o\left(\frac{|\kappa|^2}{k}\right)
\end{equation}
is valid. We want to include this fact also in the representation of the polarization vector.    
\begin{assumption}
\label{as:two}
Restricting to $\eta = e_2,$ we consider an approximated polarization vector
\[
\tilde g(\kappa) = \frac{1}{2} f_k(\kappa) e_2,\quad \kappa\in\R^2,
\] 
with $f_k$ as in \autoref{as:one} in our model, which reduces the electric field to its transverse components.    
\end{assumption}
We combine \autoref{eq:solution} with \autoref{as:one} and \autoref{as:two} and denote the resulting incident field by $ E^{(0)} = \tilde E^{(0)} e_2,$ with scalar complex field
\begin{equation}
\label{eq:incident_field}
\tilde E^{(0)}(k,x) = \frac{1}{8\pi^2} \int_{D_{k}(0)} e^{-|\kappa|^2 a}e^{i \sqrt{k^2-|\kappa|^2}r_0} e^{i K\cdot x} d \kappa, \quad k\in\R,\ x\in\R^3.
\end{equation}

\subsection*{Light scattering}
The field in \autoref{eq:incident_field} radiates the sample, which we denote by $\Omega.$ We characterize the optical properties of $\Omega$ by the refractive index represented by a function $n:\Omega\subset\R^3\to [1,\infty).$ The assumption that $n$ is a real function refers to the case of a sample where the scattering dominates over the thus neglected absorption. Additionally, because of the small bandwidth of the laser, we assume that $n$ is constant with respect to the wavenumber in the spectrum $\mathcal S.$

The main assumption however is that $\Omega$ shows a multi-layered structure, meaning that $\Omega$ consists of a finite union of subsets $\Omega_j,$ where each is characterized by a homogeneous refractive index $n_j\geq 1,$ with $n_j\neq n_{l},\, l\in\{j-1,\, j+1\}.$ We claim that all layers are parallel and therefore share a single unit normal vector $\nu_\Omega = (\sin\theta_\Omega,0,\cos\theta_\Omega),$ for a small angular value of $\theta_\Omega\in\R,$ which we assume for simplicity to be orthogonal to the polarization vector $\eta=e_2$ and which is pointing outward the object. 

To summarize, we consider a sample given by 
\begin{equation}
\label{eq:sample}
\Omega = \bigcup_{j=1}^J \Omega_j,\quad \Omega_j = \left\{x\in\R^3\ |\ a_{j+1}\leq \left( x\cdot \nu_\Omega\right)\leq a_{j}\right\},\quad n(x) = \sum_{j=1}^J \bm \chi_{\Omega_j}(x) n_j,
\end{equation}    
for a sequence of coefficients $(a_j)_j\subset \R.$ The total number of layers $J\in\N$ hereby is an unknown but arbitrary finite number. We additionally assign to every layer $\Omega_j$ its width, which is given by the positive real number $d_j = a_j - a_{j+1}.$ 
The light $E:\R\times\R^3\to\C^3$  in presence of the sample, \autoref{eq:sample}, is then modeled as a solution of the vectorial Helmholtz equation for all $x\in\R^3$:
\[
\nabla\times(\nabla \times E)(k,x) - k^2 \tilde{n}(x)^2 E(k,x) = 0 \quad \text{with}\ \ \tilde{n}(x)= \begin{cases} n_0, & x\in\R^3\setminus \Omega,\\ n(x), & x\in\Omega. \end{cases}
\]
By taking the divergence of this equation, we see that the assumption of piecewise homogeneous layers implies that $\nabla\cdot E=0$ in every set $\Omega_j$, so that $\nabla\times(\nabla \times E)=-\Delta E$ and the equation reduces to the simpler Helmholtz equation (see \autoref{eq:helmholtz}) for the second component $E_j$ (the first and third components vanish by our choice of incident field) of the field $E$ inside the layer $\Omega_j$, similar to \cite{Fer96,MarRalBopCar07}, where $k^2n_0^2$ is replaced by $k^2 \tilde{n}^2.$ At the boundaries between the single layers, we claim that the homogeneous parts of the electric field fulfill the continuity conditions
\begin{equation*}
E_j(k,x) = E_{j+1}(k,x),\quad \nabla E_j(k,x)\cdot \nu_\Omega = \nabla E_{j+1}(k,x) \cdot \nu_\Omega, 
\end{equation*}
for all $x\in\partial \Omega_j \cap \partial \Omega_{j+1}$ and $j\in \{1,\dots,J\}.$ We finally denote by $E^{(s)}$ the second component of the backscattered field $E - E^{(0)}$ from the object. 

Since the Fresnel equations allow us to get an explicit solution for the case of one discontinuity, we can iteratively construct a solution for a finite number of layers as a series of Fresnel solutions, which physically corresponds to multiple light reflections at the boundaries, see \cite{BruCha05,ElbMinVes21}.
Since multiply relfected light only contributes minorly, we ignore it in the model for the backreflected sample field. However, we keep in mind that these multiple reflections, at least up to a finite order, can be added to the model at any time. The assumption of a single reflection model makes it possible to use the transmitted part of the light at a certain layer directly as an incident field to the next layer.   

A layer-by-layer scheme, similar to the one in \cite{ElbMinVes21}, allows us to give an explicit representation of the backscattered field $E^{(s)}.$ The incident field hereby is decomposed into its plane wave parts. For each, we use the Fresnel formulas, see \cite[Chapter~7]{Jac98}, to determine the reflection coefficients (as functions of the propagation direction of the incoming plane wave) which we denote by $r_j:\R^2\to \R$ for every interface $\partial\Omega_{j-1}\cap\partial\Omega_j$ (with $\Omega_0=\Omega_{J+1}=\R^3\setminus\Omega$). 
Finally, the reflected fields are combined again and we obtain for $x,$ with $x_3 > x_{\Omega,3},$ where $x_\Omega$ is a point on the top surface $\{x_\Omega\in\R^3:x_\Omega\cdot\nu_\Omega=a_1\},$ the backscattered field  
\begin{multline}
\label{eq:scatfield}
E^{(s)}(k,x) = \frac{1}{8\pi^2} \sum_{j=1}^{J+1} \int_{\R^2} r_j(\kappa) \left(r_{\leq j-1}(\kappa) e^{i k \Psi_j(\kappa)}\right) e^{-|\kappa|^2 a} \\
\times e^{i \sqrt{k^2-|\kappa|^2}r_0} e^{i (K-\Phi(K))\cdot x_\Omega } e^{i \Phi(K)\cdot x} d \kappa,
\end{multline}
with 
\begin{equation}
\label{eq:reflection_coe}
r_j(\kappa) = \frac{n_{j-1} \cos\theta^{j-1}_t(\kappa) - \sqrt{n_j^2-n_{j-1}^2+n_{j-1}^2 \cos^2\theta^{j-1}_t(\kappa)}}{n_{j-1}\cos\theta^{j-1}_t(\kappa) + \sqrt{n_j^2-n_{j-1}^2+n_{j-1}^2 \cos^2\theta^{j-1}_t(\kappa)}}
\end{equation}
and where we denote by $K=K(\kappa)=(\kappa,-\sqrt{k^2-|\kappa|^2})$ the wave vector (introduced in \autoref{eq:wavevector}) and by $\Phi(K)=K - 2\left( K\cdot\nu_\Omega\right)\nu_\Omega$ the wave direction of the reflection of a plane wave with incident vector $K$ at an interface with unit normal vector $\nu_\Omega$. Moreover, we introduced for $j\geq 1$ the transmission coefficients and the (transmission) phase factors
\begin{equation}
\label{eq:phase}
r_{\leq j-1}(\kappa) = \prod_{l=1}^{j-1} (1-r_l^2(\kappa)), \quad\quad  \Psi_{j}(\kappa) = 2\sum_{l=1}^{j-1} n_l d_l \cos\theta^{l}_t(\kappa),
\end{equation}
where the angles $\theta^{j}_t(\kappa)$ of transmission between the layers $j$ and $j+1$ for an incident plane wave with wave vector $K(\kappa)$ can be iteratively calculated via Snell's law
\begin{equation}
\label{eq:angleoft}
\theta^{j}_t(\kappa) = \arcsin \left( \frac{n_j}{n_{j+1}}\sin\big(\theta^{j-1}_t(\kappa)\big) \right) \quad\text{with}\quad \theta^{0}_t(\kappa) = \arccos\left(-\tfrac{K}{|K|}\cdot\nu_\Omega\right).
\end{equation}
\begin{remark}
\label{eq:dimless}
We remark that since $|K(\kappa)| = k,$ the quotient 
\[
\frac{K(\kappa)}{|K(\kappa)|} = \left(\frac{\kappa}{k},-\sqrt{1-\frac{|\kappa|^2}{k^2}}\right)
\] 
in the definition of the angle of incidence $\theta_t^0$ (in \autoref{eq:angleoft}) is actually a function of the dimensionless parameter $\frac{\kappa}{k}.$ This implies that the same is true also for the angle of transmission $\theta_t^j$ and the phase factor $\Psi_j.$ Hence, we will always write $\Psi_j(\frac{\kappa}{k})$ in the following. 
\end{remark}

\subsection*{Fiber coupling}
The backscattered light is then coupled into a single-mode fiber and later transfered to the detector. This coupling is done by using a scan lens which discards all (plane wave) parts in \autoref{eq:scatfield} of which wave direction strongly deviates from the third unit normal $e_3.$ The maximal deviation is described by the (maximal) angle of acceptance, which we denote by $\theta.$ The set of incident directions yielding an accepted wave vector is then given by 
\begin{equation}
\label{eq:accepwavevec}
\mathcal B = \left\{\kappa\in\R^2\ \Big|\ \arccos\left( \frac{1}{k} \left(\Phi(K)\cdot e_3 \right) \right) \leq \theta  \right\}\subset \R^2. 
\end{equation}
This means that the area of integration in \autoref{eq:scatfield} is reduced to the set $\mathcal B\subset\R^2.$ 

Because of the restriction to this for typical cases small set the deviation in the directions $\kappa$ is limited and small. Hence, also the reflection coefficients are only varying slighty (with $\kappa$) which motivates us to consider an approximated form of \autoref{eq:scatfield}, especially in context of the inverse problem in \autoref{sec:approximation} and \autoref{sec:def_minimization}, where the reflection coefficients are directional independent.\\
For example, if $\theta_\Omega = 0,$ the accepted directions are given by $\mathcal B = D_{k \sin\theta}(0),$ the ball centered at zero with radius $k\sin\theta.$ The inclination angle of $K(\kappa)$ with $\nu_\Omega$ is maximal if $|\kappa| = k \sin \theta,$ which means that $\kappa$ is on the boundary of $\mathcal B.$ The corresponding reflection coefficient is given by
\begin{equation*}
r(\kappa) = \frac{n \cos\theta^0_t(\kappa) - \sqrt{(n')^2-n^2\sin^2(\theta_t^0(\kappa))}}{n \cos\theta^0_t(\kappa) + \sqrt{(n')^2-n^2\sin^2(\theta_t^0(\kappa))}} = \tilde r\left(\theta^0_t(\kappa)\right)
\end{equation*}
(for a single boundary reflection between $n$ and $n'$), which we then can write as a function of the inclination angle $\theta^0_t.$ For $|\kappa| = k\sin\theta,$ we get $\theta_t^0=\theta$ and we can use the approximation
\[
\tilde r(\theta) \approx \frac{n \cos\theta - n' + \tfrac{n^2}{n'}(1-\cos^2\theta)}{n \cos\theta + n' - \tfrac{n^2}{n'}(1-\cos^2\theta)}.
\]
Under the assumption of $\cos\theta\approx 1,$ which is a good approximation in our setting as $\theta$ is around $2^\circ,$ the reflection coefficient is approximated by the constant $\frac{n-n'}{n+n'},$ the classical Fresnel coefficient \cite{Jac98}.  
  
In general, the angular values of $\theta$ and $\theta_\Omega$ are considered to be small, which allows us to assume that the variation of the reflection coefficient on $\mathcal B$ is also well approximated by a constant coefficient. Next, we show that for small angular values the difference between the reflection coefficient (as a function of the angle of incidence), see \autoref{eq:reflection_coe}, and its approximating constant tends to zero quadratically. 
  
\begin{lemma}
\label{lem:ref_approx}
Let $n,n'\in\R$ and let $r^\dagger = \displaystyle\frac{n-n'}{n+n'}\in\R.$ Further, let $r:[-\pi/2,\pi/2]\to\R$ be defined by 
\[
r(y) = \frac{n\cos(y) - \sqrt{n'^2-n^2 + n^2 \cos^2(y)}}{n\cos(y) + \sqrt{n'^2-n^2 + n^2 \cos^2(y)}}.
\] 
Then for small values of $y$ we have that 
\[
\lim_{y\to 0} \left|\frac{r^\dagger- r(y)}{y}\right| = 0.
\]
\end{lemma}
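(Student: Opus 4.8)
The plan is to recognize that the stated limit is nothing but the assertion that $r$ is differentiable at $y=0$ with $r'(0)=0$, once we have checked that $r$ itself takes the value $r^\dagger$ there. First I would verify that $r(0)=r^\dagger$: setting $y=0$ gives $\cos 0 = 1$, so the radicand becomes $n'^2$ and (using that $n,n'>0$ in our refractive-index setting, so that $\sqrt{n'^2}=n'$) one obtains $r(0)=\frac{n-n'}{n+n'}=r^\dagger$. Consequently
\[
\frac{r^\dagger - r(y)}{y} = -\frac{r(y)-r(0)}{y},
\]
and the claim reduces to showing that this difference quotient tends to zero, i.e. that $r$ is differentiable at $0$ with vanishing derivative.

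Next I would establish that $r$ is in fact smooth in a neighborhood of $0$. Rewriting the radicand as $n'^2-n^2+n^2\cos^2 y = n'^2 - n^2\sin^2 y$, it is strictly positive for $|\sin y|<n'/n$, hence on a neighborhood of $0$; and the denominator $n\cos y + \sqrt{n'^2-n^2\sin^2 y}$ equals $n+n'>0$ at $y=0$ and so stays bounded away from zero nearby. Thus $r$ is a composition and quotient of smooth functions with nonvanishing denominator near $0$, and in particular differentiable there.

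The key observation, which makes the computation of $r'(0)$ immediate, is that $r$ depends on $y$ only through $\cos y$ (equivalently, only through $\sin^2 y$). Hence $r$ is an even function, $r(-y)=r(y)$, and a differentiable even function necessarily has derivative zero at the origin (since $r'(0)=-r'(0)$ forces $r'(0)=0$). Therefore $r'(0)=0$, and combining with $r(0)=r^\dagger$ yields $\lim_{y\to 0}\left|\frac{r^\dagger - r(y)}{y}\right| = |r'(0)| = 0$, as claimed.

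I do not expect any genuine obstacle here: the only points requiring care are the sign convention ensuring $\sqrt{n'^2}=n'$ in the evaluation $r(0)=r^\dagger$, and the positivity of the radicand together with the nonvanishing of the denominator guaranteeing differentiability. The evenness argument then bypasses any explicit differentiation, so no tedious computation of $r'$ is needed; should one prefer, $r'(0)=0$ can equally be confirmed by differentiating directly and noting that every resulting term carries a factor of $\sin y$, which vanishes at $y=0$.
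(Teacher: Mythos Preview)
Your proof is correct and follows essentially the same route as the paper: both arguments hinge on the observation that $r$ is an even function of $y$, so its Taylor expansion at $0$ contains only even powers and in particular $r'(0)=0$. The paper additionally records the explicit quadratic bound $|r^\dagger-r(y)|\le\frac{|n^3-2nn'^2|}{2|n'|(n+n')^2}\,y^2$, but for the stated limit your differentiability argument suffices.
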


\begin{proof}
We determine the Taylor expansion of $r$ locally around the point $y_0 = 0.$ Obviously, $r(y) = r(-y),$ for all $y\in\R,$ which makes $r$ an even function. Thus, for its Taylor expansion we expect only even exponents. 

Estimating now the difference between $r^\dagger$ and $r$ for small values of $y$ gives 
\[
|r^\dagger- r(y)| \leq \frac{|n^3-2n n'^2|}{2|n'|(n+n')^2} y^2,  
\]
which proves the result.
\end{proof}
Applying this formula in our context, we obtain the strongest deviation from $r^\dagger$ on the (closed) set $\mathcal B$ in the boundary point $\kappa$ with maximal angular deviation $\theta_0= \theta + 2 \theta_\Omega$ from the unit vector $-\nu_\Omega.$ Thus, we obtain a maximal error when approximating $r$ by $r^\dagger$ by the order of $(\theta +2 \theta_\Omega)^2.$ We silently assume that the refraction at the deeper interfaces does not change the angle too much so that this approximation remains valid at all interfaces.

\subsection*{OCT measurement}
At the detector, an interference pattern of the backscattered field from the sample combined with a reference field is measured. This reference field is obtained by the reflection of the incident light by a perfectly reflecting non-tilted mirror, which is modeled as semi-infinite object with (infinitely) large refractive index. We assume that the mirror is located around the focus of the beam where the far-field approximation \cite{Hoe03, ManWol95} is considered to be valid \cite{VesKraMinDreElb21}.  
For the (measurement) direction $s = e_3$ and the distance $\rho\in \R,\ 1\ll \rho,$ we then obtain an interference pattern 
\[
\mathcal C(k)=\Re(E^{(s)}(k,\rho e_3)\overline{E_\infty^{(0)}(k,\rho e_3)})
\] 
where $E_\infty^{(0)}(k,\rho e_3)$ denotes the far field approximation of the reflected incident field. Using the representation of the backscattered field, we obtain
\begin{equation}
\label{eq:interfer}
\mathcal C(k) =  -\frac{k}{16\pi^3 \rho} \sum_{j=1}^{J+1} \int_{\mathcal B} \left(r_j(\kappa) r_{\leq j-1}(\kappa)\right) e^{-|\kappa|^2 a}
 \sin\left(k\left(-|\kappa|^2 \frac{\psi_0}{2k^2} + \frac{\kappa_1}{k}\psi_1 + \Delta_0 + \Psi_{j}\left(\tfrac{\kappa}{k}\right)\right)\right)d \kappa,
\end{equation}
where we used the paraxial approximation, see \autoref{eq:paraxial}. The elements 
\begin{equation}
\label{eq:psi0}
\psi_0 = r_0 - \rho - 2 \cos^2(\theta_\Omega) (x_{\Omega,3}-\rho),\quad \psi_1 = \sin\left(2\theta_\Omega\right)(x_{\Omega,3}-\rho)
\end{equation}
and $\Delta_0$ describe the position of the object with respect to the focus and the phase difference between the sample and the reference mirror respectively, see \cite{VesKraMinDreElb21}. 

The backscattered light is recorded for each wavenumber $k\in\mathcal S$ independently. Collecting these single measurements, leads to the measurement $\mathcal C$ as a function of the wavenumber.
The interference pattern in \autoref{eq:interfer} hereby represents the OCT measurement comprising one A-scan of the object, which is the center of interest in the OCT forward and inverse problem.

We want to assume now that all the parameters of the system are known and that the unknown is the refractive index function $n$ of the medium. 
With the forward operator $\mathcal I\colon\R^{J+1}\times\R^J\to L^2(\mathcal S)$ 
\begin{multline}
\label{eq:forward_integral}
\mathcal I[(n_j)_{j=1}^{J+1},(d_j)_{j=1}^J](k) = \frac{-k}{16\pi^3 \rho} \sum_{j=1}^{J+1} \int_{\mathcal B} \left(r_j(\kappa) r_{\leq j-1}(\kappa)\right) e^{-|\kappa|^2 a}\\\times \sin\left(k \left(-|\kappa|^2\frac{\psi_0}{2k^2} + \frac{\kappa_1}{k}\psi_1 +\Delta_0 + \Psi_{j}\left(\tfrac{\kappa}{k}\right)\right)\right)d \kappa,
\end{multline}
we can thus write the inverse problem of OCT as the determination of $(n_j)_{j=1}^{J+1}$ and $(d_j)_{j=1}^J$ from the measurements $\mathcal C$ via the non-linear equation
\begin{equation}
\label{eq:inverse_problem}
\mathcal I[(n_j)_{j=1}^{J+1},(d_j)_{j=1}^J](k) = \mathcal C(k),\quad k\in\mathcal S. 
\end{equation}

\section{The Physical Parameters}
\label{sec:PhysParam}
So far \autoref{eq:inverse_problem} together with \autoref{eq:forward_integral} has been formulated as a general inverse scattering problem for interferometric measurement data. We now want to simplify the equations by plugging in the typical values of the parameters in an OCT system and keeping only terms of significant size.
In \autoref{tab:parameters}, we listed the working parameters of the OCT system used for our data acquisition. These are kept fixed for the whole modeling process and also the inverse problem.

\begin{table}[hbt!]
\centering
\def\arraystretch{1.25}
\begin{tabular}{ |l|c|c| } 
\hline
Parameter & Symbol & Value \\
\hline
 \hline
spectrum of wavenumbers & $\mathcal S=[k_1,k_2]$ & [\SI{4.78}{\per\micro\metre},\SI{4.9}{\per\micro\metre}] \\
mean of the spectrum & $\bk k=\frac12(k_1+k_2)$ & \SI{4.84}{\per\micro\metre} \\
half bandwidth of the spectrum & $\lk k=\frac12(k_2-k_1)$ & \SI{0.057}{\per\micro\metre} \\
beam width & $2\sqrt{a}$ & \SI{15}{\micro\metre} \\ 
angle of acceptance & $\theta$ & \SI{2.08}{\degree} \\
tilting angle of the object & $\theta_\Omega$ & \SI{1.20}{\degree} \\
distance to the detector & $\rho$ & \SI{63000}{\micro\metre}\\
\hline
\end{tabular}
\caption{Parameters of the experimental setup of the considered OCT system.}
\label{tab:parameters}
\end{table}

\begin{table}[hbt!]
\centering
\def\arraystretch{1.25}
\begin{tabular}{ |c|c| } 
\hline
Parameter & Value \\
\hline
 \hline
$\lk k\sqrt{\gamma}=\lk k\sqrt{a}\sin\theta$ & \num{1.6e-2} \\ 
$\lk k\bk k^{-1}$ & \num{1.1e-2} \\
$\bk k a\psi_0^{-1}$ & \num{4.6e-3}\\
$(\lk k\Delta_0)^{-1}$ & \num{3.9e-3} \\
$\theta^2$ & \num{1.3e-3} \\
$\theta_\Omega^2$ & \num{4.3e-4} \\
\hline
\end{tabular}
\caption{Small quantities in the experimental setup.}
\label{tab:small_parameters}
\end{table}

In particular, we have a rather narrow bandwidth, a small beam width, an almost normal incident angle, a small angle of acceptance, and we measure at a comparatively large distance to the object. This leads to some combinations of and relations between these parameters, which are small compared to one and which we will often simply neglect in the following analysis.

\begin{assumption}
\label{as:three}
We assume that the following quantities can be considered sufficiently small so that they can be safely neglected in the model.

\begin{enumerate}
\item In general, we assume the angular values $\theta$ and $\theta_\Omega$ to differ only slightly from zero, which allows us to consider both quantities as small parameters and to keep in the following only terms which are linear with respect to them. 
\item
We assume that the position of the detector is sufficiently far away from the object, which implies that in the relation in \autoref{eq:psi0} the distance between the focus and the object is dominated by the distance between the object and the detector, yielding that $\psi_0 \approx \rho-x_{\Omega,3},$ with $\rho \gg x_{\Omega,3}.$ In particular, we assume that the ratio $\frac{\bk k a}{\psi_0}$ between half the beam width $\bk k\sqrt a$ (measured in multiples of the averaged wave length) and the distance $\frac{\psi_0}{\sqrt a}$ to the detector (in multiples of half the beam width) is neglible.
\item
Similarly, we assume that the distance $\Delta_0$ between the object in the sample arm and the mirror in the reference arm is so large that the difference $k_2\Delta_0-k_1\Delta_0$ in multiples of the wave lengths between measuring it with the maximal wave vector $k_2$ and the minimal $k_1$ is large, so that we can assume $(\lk k\Delta_0)^{-1}$ to be small.
\item
The ratio between the bandwidth $2\lk k$ of the spectrum and its center $\bk k$ is assumed to be so small that it is enough to keep the zeroth order in $\lk k\bk k^{-1}$.
\item
Finally, we assume that the beam width is so small that the deviation $\lk k\sqrt{a}\sin(\theta)$ of the tilt measured with respect to the different wave lengths in the specturm is sufficiently small to neglect all terms of higher than linear order therein.
\end{enumerate}

We summarize all these small quantities and their values in our experimental setup in \autoref{tab:small_parameters}.
\end{assumption} 
  
\section{A Layer-by-layer Method for the Inverse Problem}
\label{sec:layer}
To avoid having to solve for all the $2J+1$ parameters in \autoref{eq:inverse_problem} at once, we want to try to split the reconstruction, as it was also done in \cite{ElbMinVes21, Som94, SylWinGyl96}, for example, into the subproblems
\begin{equation}\label{eq:inverse_problem_j}
\mathcal I_j[n_j,d_{j-1}](k) = \mathcal C^j(k),\quad\text{for}\quad j=1,\ldots,J,
\end{equation}
where $\mathcal I_j\colon\R\times\R\to L^2(\mathcal S)$ is the $j$th term in the sum of \autoref{eq:forward_integral}, which corresponds to the contribution from the reflection at the boundary between the layers $\Omega_{j-1}$ and $\Omega_j$:
\begin{multline}
\label{eq:integral}
\mathcal I_j[n_j,d_{j-1}](k) = \frac{-k}{16\pi^3 \rho} \int_{\mathcal B} r_j(\kappa) r_{\leq j-1}(\kappa)\\
 \times e^{-|\kappa|^2 a}\sin\left(k \left(-|\kappa|^2\frac{\psi_0}{2k^2} + \frac{\kappa_1}{k}\psi_1 +\Delta_0 + \Psi_{j}\left(\tfrac{\kappa}{k}\right)\right)\right)d \kappa.
\end{multline}
Since also the coefficients $(n_l)_{l=1}^{j-1}$ and $(d_l)_{l=1}^{j-2}$ from the previous interfaces appear in $\mathcal I_j$ via the combined reflection coefficients $r_{\leq j-1}$ and the combined phase factor $\Psi_j$ of the transmissions at the previous interfaces, we want to proceed iteratively by recovering first $n_1$ from \autoref{eq:inverse_problem_j} for $j=1$, and then obtain $(n_j,d_{j-1})$ from the $j$th problem in \autoref{eq:inverse_problem_j} after having already recovered $(n_l)_{l=1}^{j-1}$ and $(d_l)_{l=1}^{j-2}$ from the previous ones.

The difficulty hereby is, however, that we a priori do not have access to the corresponding measurements $\mathcal C^j$. To get an approximation for these, we perform a Fourier transform of \autoref{eq:inverse_problem} with respect to the wave number $k$ (we extend the function from $\mathcal S$ to $\R$ by zero) 
\[
\mathcal F_k(u)(z) = \frac{1}{\sqrt{2\pi}}\int_\R u(k) e^{-ikz} dk
\]
and obtain
\begin{equation}
\label{eq:invp_fft}
\lk k\sqrt{\frac{2}{\pi}} \sum_{j=1}^{J+1} \left(\sincs(\lk k\,.\,)e^{-i \bk k\,.\,}\right)*_z\mathcal F_k(\mathcal I_j) = \lk k\sqrt{\frac{2}{\pi}} \left(\sincs(\lk k\,.\,)e^{-i \bk k\,.\,}\right)*_z\mathcal F_k(\mathcal C),
\end{equation}
with $\lk k$ and $\bk k$ as in \autoref{tab:parameters} and where we call the dual variable to the wave number $k$ the optical distance $z$ and write $*_z$ for the convolution with respect to $z$, see \autoref{fig:data2}. This is a combination of sinc-functions (defined by $\sincs(z) = \sin(z)/z$), which are centered at the frequencies 
\[
\tilde\Delta_j(\tilde \kappa) = -|\tilde \kappa|^2\frac{\psi_0}{2} + \tilde \kappa_1\psi_1 + \Delta_0 +\Psi_j(\tilde \kappa),\ \tilde \kappa = \frac{\kappa}{k},\quad j=1,\dots,J,\ \kappa\in\mathcal B,
\] 
of the sine under the integral in \autoref{eq:interfer}. Since $\mathcal B$ is a small disk close to the origin, we get in a very rough approximation
\[ \tilde \Delta_j(\tilde\kappa)-\tilde\Delta_{j-1}(\tilde\kappa) \approx \tilde \Delta_j(0)-\tilde\Delta_{j-1}(0) = 2n_{j-1}d_{j-1}, \]
so that the distance between the peaks of the sinc-functions corresponds in zeroth order to twice the travel time of the light between the two interfaces.

If in addition to the width of the layers, the size of the spectrum $\mathcal S,$ described by $\lk k$, is sufficiently large (which is the basis of OCT), these peaks can be nicely separated from each other so that we find intervals $\mathcal U_j$ around the points $\tilde\Delta_j(\frac\kappa k)$, $\kappa\in\mathcal B$, so that
\[ \lk k\sqrt{\frac{2}{\pi}} \sum_{l=1}^{J+1} \left(\sincs(\lk k\,.\,)e^{-i \bk k\,.\,}\right)*_z\mathcal F_k(\mathcal I_l) \approx \lk k\sqrt{\frac{2}{\pi}} \left(\sincs(\lk k\,.\,)e^{-i \bk k\,.\,}\right)*_z\mathcal F_k(\mathcal I_j)\quad\text{on}\ \mathcal U_j. \]

We will therefore assume that we can recast the inverse problem from \autoref{eq:inverse_problem} as the iterative procedure, where we start from the top and then go layer by layer deeper inside the object and recover in the $j$th step from the knowledge of $(n_l)_{l=1}^{j-1},(d_l)_{l=1}^{j-2}$ the parameters $(n_j,d_{j-1})$ from
\begin{equation}
\label{eq:inv_layer}
\lk k\sqrt{\frac{2}{\pi}} \left(\sincs(\lk k .)e^{-i \bk k .}\right)*_z\mathcal F_k(\mathcal I_j[n_j,d_{j-1}]) = \lk k\sqrt{\frac{2}{\pi}} \left(\sincs(\lk k .)e^{-i \bk k .}\right)*_z\mathcal F_k\left(\mathcal C^j\right), \quad z\in \mathcal U_j,
\end{equation}
where $\mathcal I_j:\R^2\to L^2(\mathcal S)$ is the forward operator for the $j$th light-layer interface interaction, defined by \autoref{eq:inverse_problem_j} and $\mathcal U_j$ is a suitably chosen interval around the peaks of the data $\mathcal C^j = \mathcal C - \sum_{l=1}^{j-1} \mathcal I_l[n_l,d_{l-1}]$.

We note that the operator $\mathcal I_1$, corresponding to the reflection at the top boundary, only takes the refractive index as an argument and no distance.

\begin{figure}
\centering
\includegraphics[scale = 1.2]{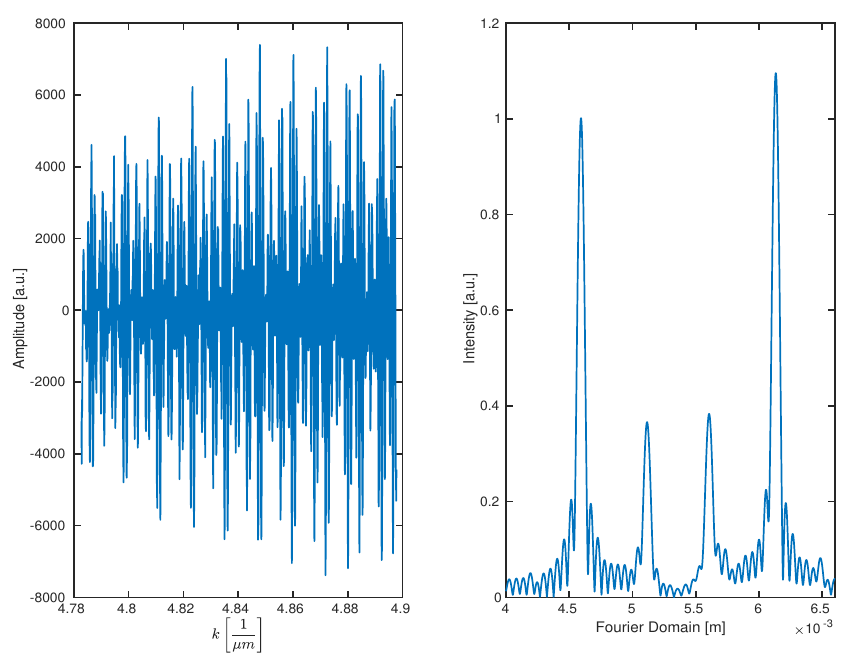}
\vspace{-1em}
\caption{A single A-scan in wavenumber domain showing high-frequent components (left). The absolute value of the dataset after Fourier transform avoiding these high-frequency components in the data (right).}
\label{fig:data2}
\end{figure}

\section{Almost Normal Incidence}
\label{sec:approximation}
We will thus look at a specific step $j$ of the inverse problem \autoref{eq:inv_layer}. To discuss its properties, we want to approximate in $\mathcal I_j$ the refractive index $r_j$ by the expression $r_j^\dag=\frac{n_{j-1}-n_j}{n_{j-1}+ n_j}$, which corresponds to the refractive index a plane wave with normal incidence on the interface would experience. This yields the simplified forward operator
\begin{equation}
\label{eq:integral_approx}
\mathcal I^*_j[n_j,d_{j-1}](k) = \frac{-kr^\dagger_j}{16\pi^3 \rho}\int_{\mathcal B} r_{\leq j-1}(\kappa)e^{-|\kappa|^2 a}\sin\left(k \left(-|\kappa|^2\frac{\psi_0}{2k^2} + \frac{\kappa_1}{k}\psi_1 +\Delta_0 + \Psi_{j}\left(\tfrac{\kappa}{k}\right)\right)\right)d \kappa.
\end{equation}

Since the angle $\theta_\Omega$ is assumed to be small, we can expand this around the value $\theta_\Omega=0$, at which we have $\mathcal B=D_{k\sin\theta}(0)$ and $\psi_1=0$, and get an analytic expression for the leading order term of the forward operator. We will do the calculations for the operator $\mathcal I^*_1$ corresponding to the first interface, which is a bit simpler than those for deeper interfaces, since also the terms $r_{\leq 0}=1$ and $\Psi_1=0$, defined in \autoref{eq:phase}, disappear.

\begin{lemma}
\label{lem:zero_order}
Let $\theta_\Omega = 0$ and let $\mathcal I^*_1$ be defined as in \autoref{eq:integral_approx}. Then, we have that 
\begin{equation}
\label{eq:zero_order}
\mathcal I^*_1[n_1](k) = i \left(\frac{r_1^\dagger k^2}{16\pi^2\rho} \right)\sum_{\epsilon\in\{-1,1\}}\epsilon\frac{e^{i\epsilon k\Delta_0}}{2 a k + i\epsilon \psi_0}\left( 1- e^{-k^2\gamma} e^{  -i\epsilon k  \xi}\right)
\end{equation}
with 
\begin{equation}
\label{eq:new_parameters}
\gamma = a\sin^2(\theta)\quad\text{and}\quad\xi = \frac{\psi_0}{2} \sin^2(\theta).
\end{equation}
\end{lemma}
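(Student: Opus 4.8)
The plan is to reduce the integral defining $\mathcal I^*_1$ to an elementary one by exploiting the simplifications available at $\theta_\Omega=0$ and $j=1$, and then to evaluate it in closed form. First I would specialize the integrand in \autoref{eq:integral_approx}: for $j=1$ the transmission factor $r_{\leq 0}=1$ and the phase $\Psi_1=0$ drop out, while $\theta_\Omega=0$ forces $\psi_1=0$ (by \autoref{eq:psi0}) and turns the acceptance set into the disk $\mathcal B=D_{k\sin\theta}(0)$, as noted in \autoref{sec:approximation}. After multiplying the argument of the sine through by $k$, the operator collapses to
\[
\mathcal I^*_1[n_1](k)=\frac{-kr_1^\dagger}{16\pi^3\rho}\int_{D_{k\sin\theta}(0)}e^{-|\kappa|^2 a}\sin\left(-|\kappa|^2\frac{\psi_0}{2k}+k\Delta_0\right)d \kappa,
\]
an integral that depends on $\kappa$ only through $|\kappa|^2$.

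Second, I would split the sine into its two complex exponentials, writing $\sin\phi=\frac{1}{2i}\sum_{\epsilon\in\{-1,1\}}\epsilon e^{i\epsilon\phi}$ with $\phi=-|\kappa|^2\frac{\psi_0}{2k}+k\Delta_0$. This pulls the $\kappa$-independent factor $e^{i\epsilon k\Delta_0}$ out of the integral and leaves, for each $\epsilon$, a Gaussian in $|\kappa|^2$ with complex coefficient $\alpha_\epsilon=a+i\epsilon\frac{\psi_0}{2k}$. Passing to polar coordinates, the angular integration contributes a factor $2\pi$ and the radial integral $\int_0^{k\sin\theta}e^{-r^2\alpha_\epsilon}r\,dr$ is elementary: the substitution $u=r^2$ evaluates it to $\frac{1}{2\alpha_\epsilon}(1-e^{-k^2\sin^2\theta\,\alpha_\epsilon})$.

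Finally, I would collect the constants. Using $\frac{1}{\alpha_\epsilon}=\frac{2k}{2ak+i\epsilon\psi_0}$ together with $\frac{-1}{i}=i$, the accumulated prefactor simplifies to $i\frac{r_1^\dagger k^2}{16\pi^2\rho}$, and expanding the exponent $k^2\sin^2\theta\,\alpha_\epsilon=k^2\gamma+i\epsilon k\xi$ with $\gamma,\xi$ as in \autoref{eq:new_parameters} reproduces the factor $1-e^{-k^2\gamma}e^{-i\epsilon k\xi}$. Summing over $\epsilon\in\{-1,1\}$ then yields precisely \autoref{eq:zero_order}.

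I do not expect a genuine analytic obstacle here: once the sine is decomposed, the remaining integral is a finite Gaussian moment and the rest is bookkeeping. The only points demanding care are tracking the factor of $i$, so that the global $-1/i$ correctly becomes the leading $i$ in the prefactor, and verifying that the \emph{finite} upper limit $k\sin\theta$ of the radial integral produces exactly the combination $k^2\gamma+i\epsilon k\xi$ in the exponent; this is the step in which the abbreviations $\gamma=a\sin^2\theta$ and $\xi=\frac{\psi_0}{2}\sin^2\theta$ are forced upon us.
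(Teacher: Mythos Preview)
Your proposal is correct and follows essentially the same route as the paper's proof: specialize to $\theta_\Omega=0$ so that $\mathcal B=D_{k\sin\theta}(0)$ and $\psi_1=0$, split the sine into its two complex exponentials, pass to polar coordinates to evaluate the resulting radial Gaussian integral over $[0,k\sin\theta]$, and then collect constants. The paper presents the same decomposition and the same polar-coordinate evaluation, so there is no substantive difference in approach.
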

\begin{proof}
For $\theta_\Omega = 0,$ the set of accepted wave vectors $\mathcal B$ is defined by the ball with radius $k\sin\theta$ and center zero. By using
\[
\sin(x) = \frac{1}{2i} \left(e^{ix} - e^{-ix}\right)
\]
we rewrite \autoref{eq:integral_approx} as 
\begin{align*}
\mathcal I^*_1[n_1] &= i \left(\frac{r_1^\dagger}{16\pi^3\rho}\right) \frac{k}{2} \int_{D_{k\sin\theta}(0)} e^{-|\kappa|^2 a} \left( e^{-|\kappa|^2\frac{i}{2k} \psi_0}e^{i k \Delta_0} -  e^{|\kappa|^2 \frac{i}{2k} \psi_0}e^{-i k \Delta_0}\right) d\kappa \\ 
&= i \left(\frac{r_1^\dagger}{16\pi^3\rho}\right) \frac{k}{2}\sum_{\epsilon\in\{-1,1\}}\epsilon e^{i\epsilon k \Delta_0}\int_{D_{k\sin\theta}(0)} e^{-|\kappa|^2(a+\epsilon\frac{i}{2k} \psi_0)}d\kappa. 
\end{align*} 
By switching to polar coordinates for $\kappa$, we can explicitly calculate this integral and find the desired representation.
\end{proof}

Taking the Fourier transform of \autoref{eq:zero_order} with respect to the wavenumber $k$, we obtain in leading order a representation of $\mathcal F_k(\mathcal I^*_1[n_1])$ as a sum of sinc-functions.
\begin{lemma}
\label{lem:sinc_representation}
Let $\theta_\Omega = 0$ and for $k\in\mathcal S=[k_1,\, k_2]$ let $\mathcal I^*_1$ be defined as in \autoref{eq:integral_approx}. Then its bandlimited Fourier transform is given as the linear combination 
\begin{align}
\label{eq:sinc_app}
\mathcal F_{k}(\mathcal I^*_1[n_1]&\bm\chi_\mathcal S)(z) = -\frac1{8\pi^2\sqrt{2\pi}}\frac{\lk k \bk k^2 r_1^\dagger}{\rho\psi_0}\sum_{\epsilon\in\{-1,1\}}e^{-i\bk k(z-\epsilon\Delta_0)}\Bigg[u_2(z-\epsilon\Delta_0) - \frac{2a\bk k\epsilon}{\psi_0} u_3(z-\epsilon\Delta_0)\\
&- e^{-\bar k^2 \gamma}e^{-i \bar k \epsilon\xi}\left( (u_2(z-\epsilon(\Delta_0-\xi)) +\mathcal O(\lk k\sqrt\gamma))- \frac{2a\bk k\epsilon}{\psi_0}(u_3(z-\epsilon(\Delta_0-\xi)) +\mathcal O(\lk k\sqrt\gamma)) \right)\Bigg].\nonumber
\end{align}
where the functions $u_j:\R\to \C$ are defined by 
\begin{equation}
\label{eq:sinc_der}
u_j(z) = \bk k^{-j}\frac{d^j}{d z^j}\left(\sincs(\lk k z)e^{-i \bar{k} z}\right)e^{i \bar{k} z}
\end{equation}
and we define the parameters $\gamma$ and $\xi$ as in \autoref{eq:new_parameters}.    
\end{lemma}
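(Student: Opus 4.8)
The plan is to push the closed-form expression for $\mathcal I^*_1[n_1]$ from \autoref{lem:zero_order} through the band-limited Fourier transform term by term, turning the phases $e^{\pm i k\Delta_0}$ and $e^{\pm i k(\Delta_0-\xi)}$ into shifted sinc-kernels and the polynomial prefactors in $k$ into derivatives of those kernels. The dictionary I would set up first is the explicit transform $\mathcal F_k(\bm\chi_{\mathcal S})(z) = \lk k\sqrt{2/\pi}\,\sincs(\lk k z)e^{-i\bk k z}$, together with the two elementary rules $\mathcal F_k(e^{i\epsilon k w}u)(z) = \mathcal F_k(u)(z-\epsilon w)$ and $\mathcal F_k(k^j u)(z) = i^j\tfrac{d^j}{dz^j}\mathcal F_k(u)(z)$. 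Comparing with the definition \autoref{eq:sinc_der} of $u_j$, these rules combine to $\mathcal F_k(k^j e^{i\epsilon k w}\bm\chi_{\mathcal S})(z) = \lk k\sqrt{2/\pi}\,(i\bk k)^j u_j(z-\epsilon w)e^{-i\bk k(z-\epsilon w)}$, so that each monomial $k^j$ against a phase produces exactly the building block $u_j$ centred at $\epsilon w$.

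Next I would linearise the only non-polynomial factor, the denominator $\tfrac{k^2}{2ak+i\epsilon\psi_0}$. Writing it as $\tfrac{-i\epsilon}{\psi_0}k^2\big(1-\tfrac{2iak\epsilon}{\psi_0}\big)^{-1}$ and expanding in the small ratio $\tfrac{ak}{\psi_0}$, which is controlled by \autoref{as:three}, gives $\tfrac{-i\epsilon k^2}{\psi_0}+\tfrac{2ak^3}{\psi_0^2}+\dots$, so only the monomials $k^2$ and $k^3$ survive to leading order. Feeding this into the dictionary and collecting the global constants, the group without the Gaussian (the $1$ inside the bracket of \autoref{eq:zero_order}) produces precisely $-\tfrac{1}{8\pi^2\sqrt{2\pi}}\tfrac{\lk k\bk k^2 r_1^\dagger}{\rho\psi_0}e^{-i\bk k(z-\epsilon\Delta_0)}\big(u_2(z-\epsilon\Delta_0)-\tfrac{2a\bk k\epsilon}{\psi_0}u_3(z-\epsilon\Delta_0)\big)$, which I would check reproduces the unshifted summand of \autoref{eq:sinc_app} with the stated prefactor; the relative sign between the $u_2$ and $u_3$ terms is accounted for by $i^2=-1$ and $i^4=1$.

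The remaining, shifted group carries the extra factor $e^{-k^2\gamma}$. Since $\bk k^2\gamma$ is of order one it cannot be expanded, but the spectrum $\mathcal S$ is narrow, so I would freeze the Gaussian at the band centre, $e^{-k^2\gamma}\approx e^{-\bk k^2\gamma}$, pull out the phase $e^{-i\epsilon k\xi}$ which shifts the centre from $\epsilon\Delta_0$ to $\epsilon(\Delta_0-\xi)$, and apply the same dictionary. This is where the error terms $\mathcal O(\lk k\sqrt\gamma)$ enter: the exact band-limited integral of $k^j e^{-k^2\gamma}$ against a phase is an (imaginary) error function rather than a sinc, and replacing it by the frozen-Gaussian sinc leaves a remainder governed by the variation of $e^{-k^2\gamma}$ across $\mathcal S$, i.e. by $\lk k\sqrt\gamma$. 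Tracking that remainder additively on each $u_j$ yields the $+\mathcal O(\lk k\sqrt\gamma)$ corrections appearing in \autoref{eq:sinc_app}.

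The routine part is the dictionary and the matching of constants for the unshifted group; the genuine obstacle is the honest control of the Gaussian-modulated, finite-band integral in the shifted group. Concretely, I expect to need to justify that freezing $e^{-k^2\gamma}$ at $\bk k$ and truncating the denominator expansion after $k^3$ both leave errors that are uniformly of order $\lk k\sqrt\gamma$ on the relevant $z$-interval, so that the clean sinc-derivative form is legitimate to leading order. Establishing that the error-function remainder collapses exactly to the order $\lk k\sqrt\gamma$, rather than a larger combination, is the delicate bookkeeping step; I would carry it out by expanding the error function about its central argument and using $\sqrt\gamma=\sqrt a\sin\theta$ together with the smallness estimates of \autoref{as:three}.
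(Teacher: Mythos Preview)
Your proposal is correct and follows essentially the same route as the paper: linearise $(2ak+i\epsilon\psi_0)^{-1}$ in the small parameter $a\bk k/\psi_0$, convert the resulting monomials $k^2,k^3$ into $z$-derivatives of the band-limited kernel $\sincs(\lk k\,\cdot\,)e^{-i\bk k\,\cdot\,}$, and treat the phases $e^{\pm ik\Delta_0}$, $e^{\pm ik(\Delta_0-\xi)}$ as shifts.

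The only noteworthy difference is in how the Gaussian factor $e^{-k^2\gamma}$ is handled. You propose to freeze it at $k=\bk k$ in $k$-space and control the remainder via the variation of $e^{-k^2\gamma}$ across $\mathcal S$ (or, equivalently, via an imaginary-error-function expansion). The paper instead passes to $z$-space via the Fourier convolution theorem, writing $I^{(2)}_{j,\epsilon}$ as a Gaussian-in-$z$ convolution with the sinc kernel, rescales by $\sqrt\gamma$, and then applies the mean value theorem to $\sincs^{(\tilde j)}$ to extract the $\mathcal O(\lk k\sqrt\gamma)$ error directly. Both arguments yield the same order because $\bk k\sqrt\gamma=\mathcal O(1)$; the convolution route has the minor advantage that the bound comes out in one line from $\|\sincs^{(\tilde j+1)}\|_\infty$ without ever invoking special functions, whereas your error-function bookkeeping requires an extra expansion step.
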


\begin{proof}
The Fourier transform of \autoref{eq:zero_order} yields  
\begin{equation}
\label{eq:Fourier_integral}
\mathcal F_{k}(\mathcal I^*_1[n_1]\bm\chi_\mathcal S)(z) =
 i\left(\frac{r_1^\dagger}{16\pi^3\rho}\right)\sqrt{\frac{\pi}{2}}\sum_{\epsilon\in\{-1,1\}}\epsilon \int_{k_1}^{k_2} \frac{k^2(2 a k- i\epsilon \psi_0)}{4 a^2 k^2 + \psi_0^2}e^{-i k (z-\epsilon\Delta_0)}  \left( 1 - e^{-k^2\gamma}  e^{ -i\epsilon k \xi }  \right) dk
\end{equation}
Because of \autoref{as:three} we have that $ak$ is small compared to $\psi_0$ and we linearize the fraction
\[
\frac{2ak - i\epsilon \psi_0}{4a^2k^2 +\psi_0^2} = \frac{2ak - i\epsilon\psi_0}{\psi_0^2}\left(1+\mathcal O\left(\frac{a^2k^2}{\psi_0^2}\right)\right).
\] 
We split this into the linear and the constant term in $k$ and see that the integrals in \autoref{eq:Fourier_integral} reduce to the two types
\begin{align*}
&I_{j,\epsilon}^{(1)}(z) = \int_{k_1}^{k_2} k^j e^{-i k (z-\epsilon\Delta_0)} dk \quad\text{and} \\
&I_{j,\epsilon}^{(2)}(z) = \int_{k_1}^{k_2} k^j e^{-i k (z-\epsilon\Delta_0)} e^{-k^2\gamma}  e^{ -i\epsilon k \xi }dk
\end{align*}
of integrals for $j\in\{2,3\}$ and $\epsilon\in\{-1,1\}$.

\begin{itemize}
\item
We calculate the first type by writing $k^j$ as derivative with respect to the variable $z$ and obtain with $\bk k=\frac12(k_1+k_2)$ and $\lk k=\frac12(k_2-k_1)$ that
\[ I_{j,\epsilon}^{(1)}(z) = i^j\int_{k_1}^{k_2} \partial_z^j\left(e^{-i k (z-\epsilon\Delta_0)}\right) dk = 2i^j\lk k\,\partial_z^j\big(\sincs(\lk k(z-\epsilon\Delta_0))e^{-i\bk k(z-\epsilon\Delta_0)}\big). \]
\item
For the second integral, we proceed analogously which gives us together with the Fourier convolution theorem
\begin{align*}
I_{j,\epsilon}^{(2)}(z) &= -i^{j-2}\int_{k_1}^{k_2} \partial_z^{j}\big(e^{-i k (z-\epsilon(\Delta_0-\xi))}\big)e^{-k^2\gamma}dk \\
&= -i^{j-2}\frac{\lk k}{\sqrt{\pi\gamma}}\int_{\R}e^{-\frac{\zeta^2}{4\gamma}}\partial_z^{j}\left(\sincs(\lk k(z-\zeta-\epsilon(\Delta_0-\xi)))e^{-i\bk k(z-\zeta-\epsilon(\Delta_0-\xi))}\right) d\zeta.
\end{align*}
We substitute $\tilde\zeta=\frac\zeta{\sqrt{\gamma}}$ to rewrite this in the form
\[ I_{j,\epsilon}^{(2)}(z) = -i^{j-2}\frac{\lk k}{\sqrt\pi}\int_{\R}e^{-\frac14\tilde\zeta^2}\partial_z^{j}\left(\sincs(\lk k(z-\tilde\zeta\sqrt\gamma-\epsilon(\Delta_0-\xi)))e^{-i\bk k(z-\tilde\zeta\sqrt\gamma-\epsilon(\Delta_0-\xi))}\right) d\tilde\zeta. \]
Using that, according to \autoref{as:three}, the term $\lk k\sqrt\gamma$ is much smaller than one and the integral is due to the Gaussian factor $e^{-\frac14\tilde\zeta^2}$ restricted to a domain of order one, we approximate the sinc-function in the integral by its zeroth order in terms of $\tilde \zeta \lk k\sqrt\gamma.$ The mean value theorem for 
\[ 
\big|\sincs^{(\tilde j)}(\lk k(z-\tilde\zeta\sqrt\gamma-\epsilon(\Delta_0-\xi)))-\sincs^{(\tilde j)}(\lk k(z-\epsilon(\Delta_0-\xi)))\big| \le \lk k|\tilde\zeta|\sqrt\gamma\big\|\sincs^{(\tilde j+1)}\big\|_\infty \]
where $\|\sincs^{(\tilde j+1)}\|_\infty\le C,$ yields
\begin{align*}
&\partial_z^{j}\left(\sincs(\lk k(z-\tilde\zeta\sqrt\gamma-\epsilon(\Delta_0-\xi)))e^{-i\bk k(z-\tilde\zeta\sqrt\gamma-\epsilon(\Delta_0-\xi))}\right) \\
&\qquad= \partial_z^{j}\left(\sincs(\lk k(z-\epsilon(\Delta_0-\xi)))e^{-i\bk k(z-\tilde\zeta\sqrt\gamma-\epsilon(\Delta_0-\xi))}\right) + |\tilde\zeta|\bk k^j\sum_{\tilde j=0}^j\mathcal O\Big((\lk k\bk k^{-1})^{\tilde j}\lk k\sqrt\gamma\Big).
\end{align*}
Again by \autoref{as:three}, using that $\lk k\bk k^{-1}$ is small, we only keep terms of zeroth order and obtain  
\begin{align*}
I_{j,\epsilon}^{(2)}(z) &= -i^{j-2}\frac{\lk k}{\sqrt\pi}\Big(\int_{\R}e^{-\frac14\tilde\zeta^2}\partial_z^{j}\Big(\sincs(\lk k(z-\epsilon(\Delta_0-\xi)))e^{-i\bk k(z-\tilde\zeta\sqrt\gamma-\epsilon(\Delta_0-\xi))}\Big) d\tilde\zeta \\
 &\qquad\qquad\qquad+ \bk k^j\mathcal O(\lk k\sqrt\gamma)\int_\R e^{-\frac14\tilde\zeta^2}|\tilde\zeta| d\tilde\zeta\Big ) \\
&= -2i^{j-2}\lk k \left(e^{-\bk k^2\gamma}\partial_z^{j}\left(\sincs(\lk kz-\epsilon(\Delta_0-\xi))e^{-i\bk k(z-\epsilon(\Delta_0-\xi))}\right)+ \bk k^j\mathcal O(\lk k\sqrt{\gamma}) \right). 
\end{align*}
\end{itemize}
Introducing the functions $u_j$ as in \autoref{eq:sinc_der}, we obtain for
\begin{align*}
\mathcal F_{k}(\mathcal I^*_1[n_1]&\bm\chi_\mathcal S)(z) = \left(\frac{r_1^\dagger}{16\pi^3\rho}\right)\sqrt{\frac{\pi}{2}}\sum_{\epsilon\in\{-1,1\}}\left(\frac1{\psi_0}(I_{2,\epsilon}^{(1)}(z)-I_{2,\epsilon}^{(2)}(z))+\frac{2a\epsilon i}{\psi_0^2}(I_{3,\epsilon}^{(1)}(z)-I_{3,\epsilon}^{(2)}(z))\right) \\
&=-\frac{1}{8\pi^2 \sqrt{2\pi}} \frac{\lk k \bk k^2 r_1^\dagger}{\rho\psi_0}\sum_{\epsilon\in\{-1,1\}}e^{-i\bk k(z-\epsilon\Delta_0)}\Bigg[u_2(z-\epsilon\Delta_0) - \frac{2a\bk k\epsilon}{\psi_0} u_3(z-\epsilon\Delta_0)\\
&- e^{-\bar k^2 \gamma}e^{-i \bar k \epsilon\xi}\left( \big(u_2(z-\epsilon(\Delta_0-\xi)) +\mathcal O(\lk k\sqrt\gamma)\big)- \frac{2a\bk k\epsilon}{\psi_0}\big(u_3(z-\epsilon(\Delta_0-\xi)) +\mathcal O(\lk k\sqrt\gamma)\big) \right)\Bigg].
\end{align*}
\quad
\end{proof}

By \autoref{as:three} we have that $\lk k\Delta_0\gg1$, which holds true for the experimental data. We can then ignore for $z>0$ those sinc terms in \autoref{eq:sinc_app} which are centered on the negative axis. The function $\mathcal F_{k}(\mathcal I^*_1[n_1]\bm\chi_\mathcal S)$ is then approximated by 
\begin{align}
\label{eq:assumption}
&\mathcal F_{k}(\mathcal I^*_1[n_1]\bm\chi_\mathcal S)(z) = -\frac1{8\pi^2\sqrt{2\pi}}\frac{\lk k \bk k^2 r_1^\dag}{\rho\psi_0} e^{-i\bar k(z-\Delta_0)}   \Bigg( u_2(z-\Delta_0) - \frac{2a\bk k}{\psi_0} u_3(z-\Delta_0) \\
&- e^{-\bar k^2 \gamma}e^{-i \bar k \xi}\left( (u_2(z-(\Delta_0-\xi)) +\mathcal O(\lk k\sqrt\gamma))- \frac{2a\bk k}{\psi_0}(u_3(z-(\Delta_0-\xi)) +\mathcal O(\lk k\sqrt\gamma)) \right)+\mathcal O\left(\frac1{\lk k\Delta_0}\right)  \Bigg),\nonumber
\end{align}  
for $z>0$, which reflects the minor influence of the sinc-functions $u_j$ which are centered around $-\Delta_0$ at the function value close to the point $z=\Delta_0.$

\begin{lemma} 
\label{lem:dom_terms} 
\begin{enumerate}
\item
For $z+\Delta_0>0$, we have the asymptotic behavior
\begin{align}
\label{eq:dom_terms}
&\left(\mathcal F_{k}(\mathcal I^*_1[n_1]\bm \chi_\mathcal S)\right)(z + \Delta_0) e^{i\bar k z} \nonumber\\
&\qquad= -\frac1{8\pi^2\sqrt{2\pi}}\frac{\lk k\bk k^2r_1^\dagger}{\rho\psi_0} \Bigg[ -\sincs\left(\lk k z\right)+ e^{-\bar{k}^2\gamma}e^{-i \bar k \xi}\sincs\left(\lk k (z+\xi)\right) \\
&- 2i \Big(  \lk k\bk k^{-1}  \big(\sincs'\left(\lk k z\right) - e^{-\bar{k}^2\gamma}e^{-i \bar k \xi} \sincs'\left(\lk k (z+\xi)\right)\big)+ \frac{a\bk k}{\psi_0}\big(\sincs\left(\lk k z\right) -e^{-\bar{k}^2\gamma}e^{-i \bar k \xi} \sincs\left(\lk k (z+\xi)\right)\big)\Big) \nonumber\\
&\qquad+\mathcal O\left(\left(\bk k^{-1}\lk k+\frac{a\bk k}{\psi_0}\right)^2\right)+ \mathcal O(\lk k\sqrt\gamma) \left( 1+ \frac{a\bk k}{\psi_0}\right)+\mathcal O\left(\frac1{\lk k\Delta_0}\right)\Bigg].\nonumber
\end{align}
\item
In particular, we find for the norm in the highest order
\begin{equation}
\label{eq:dom_terms_squared}
\begin{split}
\left|\left(\mathcal F_{k}(\mathcal I^*_1[n_1]\bm \chi_\mathcal S)\right)(z + \Delta_0)\right|^2 &= \frac1{2^7\pi^5}\left(\frac{\un k\bk k^2r_1^\dagger}{\rho\psi_0}\right)^2 \Big| -\sincs\left(\lk k z\right)+ e^{-\bar{k}^2\gamma}e^{-i \bar k \xi}\sincs\left(\lk k (z+\xi)\right) \\
&\qquad+\mathcal O\left(\frac1{\lk k\Delta_0}\right)+\mathcal O(\bk k^{-1}\lk k)+\mathcal O\left(\frac{a\bk k}{\psi_0}\right)+\mathcal O(\lk k\sqrt{\gamma})\left(1+\frac{a\bk k}{\psi_0}\right)\Big|^2.
\end{split}
\end{equation}
\end{enumerate}
\end{lemma}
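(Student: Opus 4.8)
The plan is to start from the approximation \autoref{eq:assumption}, which already collapses $\mathcal F_k(\mathcal I_1^*[n_1]\bm\chi_{\mathcal S})(z)$ into the two sinc-packets located near $z=\Delta_0$ and near $z=\Delta_0-\xi$. First I would perform the shift $z\mapsto z+\Delta_0$ and multiply by $e^{i\bar k z}$. Since the global phase in \autoref{eq:assumption} is exactly $e^{-i\bar k(z-\Delta_0)}$, this multiplication cancels it, so the whole statement of part \textit{(i)} reduces to expanding the functions $u_2$ and $u_3$ of \autoref{eq:sinc_der} to first order in the two small parameters $\lk k\bk k^{-1}$ and $\tfrac{a\bk k}{\psi_0}$, the arguments now being $z$ for the first packet and $z+\xi$ for the second.

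To expand the $u_j$ I would apply the Leibniz rule to $\frac{d^j}{dz^j}\big(\sincs(\lk kz)e^{-i\bar kz}\big)$; multiplying by $\bk k^{-j}e^{i\bar kz}$ then gives the closed form
\[
u_j(z)=\sum_{m=0}^{j}\binom{j}{m}(-i)^{j-m}(\lk k\bk k^{-1})^m\sincs^{(m)}(\lk kz),
\]
so that $u_2(z)=-\sincs(\lk kz)-2i\lk k\bk k^{-1}\sincs'(\lk kz)+\mathcal O((\lk k\bk k^{-1})^2)$ and $u_3(z)=i\sincs(\lk kz)+\mathcal O(\lk k\bk k^{-1})$. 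Inserting these into $u_2(z)-\tfrac{2a\bk k}{\psi_0}u_3(z)$ keeps the constant term $-\sincs(\lk kz)$ together with the two linear corrections $-2i\lk k\bk k^{-1}\sincs'(\lk kz)$ (the $m=1$ term of $u_2$) and $-2i\tfrac{a\bk k}{\psi_0}\sincs(\lk kz)$ (the $m=0$ term of $u_3$), while the $m=2$ term of $u_2$ and every term of $\tfrac{2a\bk k}{\psi_0}u_3$ beyond $m=0$ are of order $(\lk k\bk k^{-1})^2$ and $\tfrac{a\bk k}{\psi_0}\lk k\bk k^{-1}$ and hence fit into $\mathcal O\big((\lk k\bk k^{-1}+\tfrac{a\bk k}{\psi_0})^2\big)$. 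Doing the same for the $(z+\xi)$-packet, which carries the prefactor $e^{-\bar k^2\gamma}e^{-i\bar k\xi}$ and the extra $\mathcal O(\lk k\sqrt\gamma)$ remainders already present in \autoref{eq:assumption}, and subtracting it, the constant pieces give the sinc-difference $-\sincs(\lk kz)+e^{-\bar k^2\gamma}e^{-i\bar k\xi}\sincs(\lk k(z+\xi))$ and the linear pieces group into exactly the bracket multiplying $-2i$ in \autoref{eq:dom_terms}; the $\mathcal O(\lk k\sqrt\gamma)$, the quadratic, and the $\mathcal O(1/(\lk k\Delta_0))$ contributions collect into the stated remainders.

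For part \textit{(ii)} I would simply take the modulus squared of \autoref{eq:dom_terms}. The factor $e^{i\bar kz}$ has unit modulus and drops, and the scalar prefactor squares to $\tfrac1{2^7\pi^5}\big(\tfrac{\lk k\bk k^2r_1^\dagger}{\rho\psi_0}\big)^2$, using $(8\pi^2)^2\cdot2\pi=2^7\pi^5$. By the boundedness of $\sincs$ and $\sincs'$ the entire $-2i(\cdots)$ bracket is of order $\lk k\bk k^{-1}$ and $\tfrac{a\bk k}{\psi_0}$; since the leading sinc-difference is $\mathcal O(1)$, expanding $|A+B|^2$ with $B$ small shows these linear corrections, together with the other remainders, are absorbed into the $\mathcal O$-terms kept inside the modulus in \autoref{eq:dom_terms_squared}.

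I expect the only genuine hurdle to lie in the second paragraph: keeping track of how the $m$-indexed terms of $u_2$ and $u_3$, the argument shift $z\mapsto z+\xi$, and the pre-existing $\mathcal O(\lk k\sqrt\gamma)$ remainders recombine so that precisely the constant and one linear layer survive and everything else lands in the advertised orders $\mathcal O\big((\lk k\bk k^{-1}+\tfrac{a\bk k}{\psi_0})^2\big)$, $\mathcal O(\lk k\sqrt\gamma)(1+\tfrac{a\bk k}{\psi_0})$ and $\mathcal O(1/(\lk k\Delta_0))$. Once part \textit{(i)} is secured, part \textit{(ii)} is essentially immediate.
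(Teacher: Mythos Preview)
Your proposal is correct and follows essentially the same route as the paper: both proofs start from \autoref{eq:assumption}, expand $u_j$ via the Leibniz rule as $u_j(z)=(-i)^j\big(\sincs(\lk kz)+ij\,\lk k\bk k^{-1}\sincs'(\lk kz)+\mathcal O((\lk k\bk k^{-1})^2)\big)$, insert this into $u_2-\tfrac{2a\bk k}{\psi_0}u_3$ at the two arguments $z$ and $z+\xi$, and then discard the cross term $\tfrac{a\bk k}{\psi_0}\cdot\lk k\bk k^{-1}\sincs'$ as quadratic in the small parameters; part \textit{(ii)} is handled identically by absorbing the $-2i(\cdots)$ bracket into the $\mathcal O$-terms.
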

\begin{proof}
\begin{enumerate}
\item
We remark that
\begin{align*}
u_j(z) &= \bk k^{-j}\partial_z^j\left(\sincs(\lk kz)e^{-i\bk kz}\right)e^{i\bk kz} \\
&= \bk k^{-j}\sum_{l=0}^j\binom jl\partial_z^l(\sincs(\lk kz))\partial_z^{j-l}(e^{-i\bk kz})e^{i\bk kz} \\
&= (-i)^j\big(\sincs(\lk kz)+ij\bk k^{-1}\lk k\sincs'(\lk kz)+\mathcal O(\bk k^{-2}\lk k^2)\big).
\end{align*}

With this, we find
\begin{align*}
\Big(u_2(&z) - \frac{2a\bk k}{\psi_0}  u_3(z) \Big) -e^{-\bar k^2 \gamma}e^{-i \bar k \xi}\Big( u_2(z+\xi) - \frac{2a\bk k}{\psi_0}u_3(z+\xi) \Big) \\
&=-\sincs(\lk kz)-2i\bk k^{-1}\lk k\sincs'(\lk kz)-\frac{2a}{\psi_0}\big(i\bk k\sincs(\lk kz)-3\lk k\sincs'(\lk kz)\big) \\
&+e^{-\bar k^2 \gamma}e^{-i \bar k \xi}\left(\sincs(\lk k(z+\xi))+2i\bk k^{-1}\lk k\sincs'(\lk k(z+\xi))+\frac{2a}{\psi_0}\big(i\bk k\sincs(\lk k(z+\xi))-3\lk k\sincs'(\lk k(z+\xi))\big)\right)\\
&\qquad\qquad + \mathcal O(\bk k^{-2}\lk k^2).
\end{align*}

If we neglect herein the fourth and eighth term
\[ \frac{6a\lk k}{\psi_0}\big(\sincs'(\lk kz)-e^{-\bar k^2 \gamma}e^{-i \bar k \xi}\sincs'(\lk k(z+\xi))\big) = \mathcal O\left(\frac{a\bk k}{\psi_0}\,\bk k^{-1}\lk k\right) \]
as they are of the order of the small quantity $\frac{a\bk k}{\psi_0}$ (which is of order $10^{-3}$ in our setting) and plug this into our expression in \autoref{eq:assumption} for $\mathcal F_{k}(\mathcal I^*_1[n_1]\bm \chi_\mathcal S)$, we end up with \autoref{eq:dom_terms}.
\item
We see that the second term in \autoref{eq:dom_terms} is of lower order compared to the first one:
\begin{align}
\label{eq:lower_order} 
\lk k\bk k^{-1}  \big(\sincs'\left(\lk k z\right) - e^{-\bar{k}^2\gamma}e^{-i \bar k \xi}\sincs'&(\lk k(z+\xi))\big) \\ 
&+ \frac{a\bk k}{\psi_0}\big(\sincs\left(\lk k z\right) - e^{-\bar{k}^2\gamma}e^{-i \bar k \xi} \sincs(\lk k (z+\xi))\big) = \mathcal O(\bk k^{-1}\lk k)+\mathcal O\left(\frac{a\bk k}{\psi_0}\right). \nonumber
\end{align}
If we thus neglect this term, we obtain \autoref{eq:dom_terms_squared}.
\end{enumerate}
\end{proof} 

In order to apply these formulas for $j \geq 2,$ we notice that the only difference from \autoref{eq:zero_order} is the term $\displaystyle\sum_{l=1}^{j-1} 2 n_{l} d_l\cos\theta^l_t, $
where the components are calculated iteratively by using the linearization of the square-root for $\kappa\in\mathcal B=D_{k\sin\theta}(0)$ making use of the smallness of the angle of acceptance $\theta$:
\begin{align*}
2n_{1}d_1 \cos \theta_t^{1} &= 2n_{1}d_1 \sqrt{1 - \frac{|\kappa|^2 n_0^2}{n_1^2k^2} } = 2n_1 d_1 - \frac{n_0^2}{n_1}\frac{|\kappa|^2 }{k^2} d_1+\mathcal O(\theta^4), \quad l=1, \\
2n_{l}d_l \cos \theta_t^{l} &= 2n_l d_l - \frac{n_0^2}{n_l}\frac{|\kappa|^2 }{k^2} d_l+\mathcal O(\theta^4), \quad l \geq 2.
\end{align*}

Thus, we can adapt \autoref{eq:dom_terms_squared} to the contribution $\mathcal I^*_j$ from the $j$th interface by introducing the layer dependent variables
\begin{equation}
\label{eq:new_variables}
\begin{aligned}
\Delta_j &= \Delta_0 + 2 \sum_{l=1}^{j-1} n_l d_l, \\
\psi_{0,j} &= \psi_0 + 2\sum_{l=1}^{j-1} \frac{d_l n_0}{n_l}, \\
\xi_j &= \frac{\psi_{0,j}}{2}\sin^2(\theta),
\end{aligned}
\end{equation}
for all $j=1,2,\ldots,J$, where the coefficients for the first interface coincide with the system parameters: $\Delta_1=\Delta_0$, $\psi_{0,1}=\psi_0$, and $\xi_1=\xi$.

Moreover, we use \autoref{lem:ref_approx} to approximate the term $r_{\le j-1}(\kappa)$, defined in \autoref{eq:phase}, in the integral of $\mathcal I_j^*$ by
\[ r_{\le j-1}(\kappa) = r_{\le j-1}^\dag+\mathcal O(\theta^2)\quad\text{with}\quad r_{\le j-1}^\dag=\prod_{l=1}^{j-1}(1-(r^\dag_l)^2). \]
With this, we obtain more generally the formula 
\begin{align}
\label{eq:dom_terms_squared_j}
\Big|\Big(\mathcal F_{k}&(\mathcal I^*_j[n_j]\bm \chi_\mathcal S)\Big)(z)\Big|^2 \\
&= \frac1{2^7\pi^5}\left(\frac{\un k\bk k^2r_j^\dagger r_{\le j-1}^\dag}{\rho\psi_{0,j}}\right)^2 \Bigg| U_j(z)+\mathcal O\left(\frac1{\lk k\Delta_j}\right)+\mathcal O(\bk k^{-1}\lk k)+\mathcal O\left(\frac{a\bk k}{\psi_{0,j}}\right)+ \mathcal O(\lk k\sqrt{\gamma})\left(1+\frac{a\bk k}{\psi_{0,j}}\right) \Bigg|^2 \nonumber\\
&\qquad\qquad+\left(\frac{\un k\bk k^2}{\rho\psi_{0,j}}\right)^2\mathcal O(\theta^2)\nonumber
\end{align}
with the function
\begin{equation}
\label{eq:reference_function}
U_j(z) = -\sincs(\lk k (z-\Delta_j)) + e^{-\bar k^2\gamma} e^{-i \bk k\xi_j}\sincs(\lk k (z-(\Delta_j - \xi_j))),
\end{equation} 
for the Fourier transform of the forward operator $\mathcal I_j^*$. The last error term $\mathcal O(\theta^2)$ herein accounts for the approximation of the combined reflection coefficient $r_{\leq {j-1}}$ by its zeroth order.     

In particular, we can use this explicit expression to estimate how much the signal $\mathcal F_k(\mathcal I^*_j\bm\chi_\mathcal S)$ from the $j$th interface influences the values $\mathcal F_k(\mathcal I^*_l\bm\chi_\mathcal S)$ from the $l$th interface in an interval $\mathcal U_l$ around the peak $\Delta_l$ of the second interface. We show the estimate for simplicity only for the interaction between the first and the second interface.

\begin{lemma}
\label{lem:distance}
There exists an interval $\mathcal U_2$ such that
\[ \frac{|\mathcal F_k(\mathcal I^*_1\bm\chi_\mathcal S)(z)|}{|\mathcal F_k(\mathcal I^*_2\bm\chi_\mathcal S)(z)|} \le \frac1{\lk kn_1d_1}\frac4{1-e^{-\bk k^2\gamma}}\frac{|r_1^\dagger|}{|r_2^\dagger r_{\le1}^\dag|}\left(1+\frac{2n_0d_1}{n_1\psi_0}\right) = \mathcal O\left(\frac1{\lk kd_1}\right) \]
for all $z\in\mathcal U_2$.
\end{lemma}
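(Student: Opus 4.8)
The plan is to reduce the quotient to the explicit leading-order expression \eqref{eq:dom_terms_squared_j} for $|\mathcal F_k(\mathcal I^*_j\bm\chi_\mathcal S)(z)|$ and then to bound the two interface contributions separately on a small interval around the peak $\Delta_2$ of the second reflection. Concretely, I would first take square roots in \eqref{eq:dom_terms_squared_j} for $j=1$ and $j=2$ and form the ratio. All factors independent of $j$ and $z$, namely $(2^7\pi^5)^{-1/2}$, $\un k$, $\bk k^2$ and $\rho$, cancel, leaving in leading order
\[
\frac{|\mathcal F_k(\mathcal I^*_1\bm\chi_\mathcal S)(z)|}{|\mathcal F_k(\mathcal I^*_2\bm\chi_\mathcal S)(z)|} = \frac{|r_1^\dag|}{|r_2^\dag r_{\le 1}^\dag|}\,\frac{\psi_{0,2}}{\psi_{0,1}}\,\frac{|U_1(z)|}{|U_2(z)|}.
\]
Here $r_{\le 0}^\dag = 1$, and from \eqref{eq:new_variables} the ratio $\psi_{0,2}/\psi_{0,1}=(\psi_0+2d_1n_0/n_1)/\psi_0$ equals exactly $1+\frac{2n_0d_1}{n_1\psi_0}$. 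This already produces every factor of the asserted bound except $\frac1{\lk kn_1d_1}\frac4{1-e^{-\bk k^2\gamma}}$, which must come from bounding $|U_1|/|U_2|$.

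Second, I would bound $|U_1|$ from above. Since $\Delta_2-\Delta_1 = 2n_1d_1$ by \eqref{eq:new_variables}, every $z$ with $|z-\Delta_2|\le n_1d_1$ makes both sinc arguments in the definition \eqref{eq:reference_function} of $U_1$ (centered at $\Delta_1$ and at $\Delta_1-\xi_1$) at least $\lk kn_1d_1$ in modulus; combining $|\sincs(x)|\le 1/|x|$ with $e^{-\bk k^2\gamma}\le 1$ then yields $|U_1(z)|\le \frac{1+e^{-\bk k^2\gamma}}{\lk kn_1d_1}\le \frac2{\lk kn_1d_1}$.

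Third, and this is the delicate step, I would bound $|U_2|$ from below near $\Delta_2$. At $z=\Delta_2$ the first sinc in \eqref{eq:reference_function} equals $-1$, while the second has modulus at most $e^{-\bk k^2\gamma}$ irrespective of the phase $e^{-i\bk k\xi_2}$, so the reverse triangle inequality gives $|U_2(\Delta_2)|\ge 1-e^{-\bk k^2\gamma}$. The main obstacle is that the two sinc terms in $U_2$ can interfere destructively once $z$ leaves $\Delta_2$, so I cannot take the whole ball $\{|z-\Delta_2|\le n_1d_1\}$; instead, by continuity there is a subinterval $\mathcal U_2\subseteq\{z:|z-\Delta_2|\le n_1d_1\}$ containing $\Delta_2$ on which $\sincs(\lk k(z-\Delta_2))\ge \frac12(1+e^{-\bk k^2\gamma})$, and hence $|U_2(z)|\ge \frac12(1-e^{-\bk k^2\gamma})$.

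Finally, dividing the second and third estimates on $\mathcal U_2$ gives $\frac{|U_1(z)|}{|U_2(z)|}\le \frac4{\lk kn_1d_1(1-e^{-\bk k^2\gamma})}$, and multiplying by the prefactor from the first step reproduces the claimed bound. That this is $\mathcal O(\frac1{\lk kd_1})$ is then immediate, since $n_1$, $\frac4{1-e^{-\bk k^2\gamma}}$, $\frac{|r_1^\dag|}{|r_2^\dag r_{\le1}^\dag|}$ and $1+\frac{2n_0d_1}{n_1\psi_0}$ are all $\mathcal O(1)$. Throughout I would keep in mind that the $\mathcal O$-remainders in \eqref{eq:dom_terms_squared_j} are controlled by \autoref{as:three} and are of strictly lower order than the retained $|U_j|$-terms, so they do not affect the leading-order bound.
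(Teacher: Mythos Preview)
Your proposal is correct and matches the paper's proof essentially step for step: both reduce the ratio to $\frac{|r_1^\dag|}{|r_2^\dag r_{\le1}^\dag|}\frac{\psi_{0,2}}{\psi_{0,1}}\frac{|U_1(z)|}{|U_2(z)|}$, bound $|U_1|\le\frac2{\lk kn_1d_1}$ via $|\sincs(x)|\le1/|x|$ on an interval of half-width at most $n_1d_1$ around $\Delta_2$, and use continuity at $z=\Delta_2$ (where $|U_2|\ge1-e^{-\bk k^2\gamma}$) to secure $|U_2|\ge\frac12(1-e^{-\bk k^2\gamma})$ on a subinterval. The only cosmetic difference is that the paper invokes continuity directly on $|U_2|$ to obtain its $\tau\in(0,n_1d_1)$, whereas you impose the slightly more explicit condition $\sincs(\lk k(z-\Delta_2))\ge\frac12(1+e^{-\bk k^2\gamma})$ and then apply the reverse triangle inequality; both yield the same lower bound.
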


\begin{proof}
The behavior of the two Fourier transforms is mainly described by the functions $U_1$ and $U_2$ from \autoref{eq:reference_function}. We therefore estimate the contribution from $U_1$ at a point $z=y+\Delta_2=y+\Delta_1+2n_1d_1$ around the position $\Delta_2$, where $U_2$ has its peak, from above and the contribution of $U_2$ there from below.

To this end, we first choose a small parameter $\tau\in(0,n_1d_1)$ such that
\[ |U_2(z)| = \left|\sincs(\lk k y) - e^{-\bar k^2\gamma} e^{-i \bk k \xi_2}\sincs(\lk k (y +\xi_2))\right| \ge \frac{1-e^{-\bar k^2\gamma}}2 \]
holds for all $y\in(-\tau,\tau)$. (Such a parameter has to exist, since at $y=0$ the term is bounded by $1-\e^{-\bk k^2\gamma}$.)

Since $y\in(-\tau,\tau)$, we can then also bound $U_1$ with
\[ |U_1(z)|=\left|\sincs(\lk k (y+2n_1 d_1)) - e^{-\bar k^2\gamma} e^{-i \bk k \xi_1}\sincs(\lk k (y+2 n_1 d_1 +\xi_1)))\right| \le \frac{1+e^{-\bar k^2\gamma}}{\lk k(2n_1 d_1-\tau)} \le \frac2{\lk kn_1 d_1} \]
from above.

Therefore, we get with $\psi_{0,1}=\psi_0$ and $\psi_{0,2}=\psi_0+\frac{2n_0d_1}{n_1}$
\[ \frac{|\mathcal F_k(\mathcal I^*_1\bm\chi_\mathcal S)(z)|}{|\mathcal F_k(\mathcal I^*_2\bm\chi_\mathcal S)(z)|} = \left|\frac{r_1^\dagger}{r_2^\dagger r_{\le 1}^\dag}\frac{\psi_{0,2}}{\psi_{0,1}}\frac{U_1(z)}{U_2(z)}\right| \le \frac1{\lk kn_1d_1}\frac4{1-e^{-\bk k^2\gamma}}\frac{|r_1^\dagger|}{|r_2^\dagger r_{\le 1}^\dag|}\left(1+\frac{2n_0d_1}{n_1\psi_0}\right). \]
\quad
\end{proof}

The above arguments justify the method to reduce the all-at-once-approach of the inverse problem to the simpler layer-by-layer reconstruction presented in \autoref{sec:layer}. However, the enclosed form of the integral operators $\mathcal I_j$ (and $\mathcal I_j^*$ respectively) still prevents us from an analytic expression for the reconstruction. Hence, we formulate our problem as a least squares minimization problem.

\section{Least Squares Minimization for the Refractive Index}
\label{sec:def_minimization}

To numerically obtain a solution of the inverse problem in \autoref{eq:inv_layer} for a fixed $j$, we write it as a discrete least squares minimization problem of the functional 
\begin{equation}
\label{eq:min_func}
\mathcal J_j(n_j,d_{j-1}) = \sum_{m=1}^{M_j}\left(\left|\mathcal F_k\left(\left(\sum_{l=1}^{j-1} \mathcal I_l+ \mathcal I_j[n_j,d_{j-1}]\right)\bm\chi_\mathcal S\right)(z_{j,m})\right|^2
- y_{j,m}^{(\delta)} \right)^2, 
\end{equation} 
for the parameters $n_j$ and $d_{j-1}$.

To simplify the analysis, we will, however, replace herein the full forward operator $\mathcal I_j$ by the reduced forward operator $\mathcal I_j^*$ and consider thus the functional
\begin{equation}
\label{eq:approx_functional}
\mathcal J^*_j(n_j,d_{j-1}) = \sum_{m=1}^{M_j}\left(\left|\mathcal F_k\left(\left(\sum_{l=1}^{j-1} \mathcal I_l + \mathcal I^*_j[n_j,d_{j-1}]\right)\bm\chi_\mathcal S\right)(z_{j,m})\right|^2
- y_{j,m}^{(\delta)} \right)^2,
\end{equation} 

Hence, for every interface $j$ we want to minimize $\mathcal J^*_j(n_j,d_{j-1})$ with respect to $n_j$ and $d_{j-1}$. Hereby, the data is provided on a discretized grid of $M_j$ points which we denote by $\{z_{j,m}\}_{m=1}^{M_j}\subset\mathcal U_j$ in the interval $\mathcal U_j$ selected in \autoref{sec:layer}. We silently assume that the distance between the interfaces is sufficiently large so that the influence from the values $\mathcal F_k(\mathcal I_l[n_l,d_{l-1}]\bm\chi_{\mathcal S})$ for $l>j$ in the interval $\mathcal U_j$ is negligible (as estimated in \autoref{lem:distance}).

Finally, we declare the set of admissible values for $n$ and $d$ (in every step) by 
\begin{equation}
\label{eq:adm_v}
\mathcal A_j = \{(n,d)\in\R^2\ |\ 1\leq n<\infty,\, n\neq n_{j-1},\ 0< d <\infty\}\subset \R^2.
\end{equation} 
The reconstruction is based on the isolation of the clearly visible peaks in the data, which originates from a relatively high refractive index contrast between the single layers. If no contrast is present, meaning that $n_j=n_{j-1},$ the method considers the layer as a unit and goes on to the next layer. Hence, we may exclude -- for the step $j$ -- $n_{j-1},$ the refractive index from the previously reconstructed layer, from the admissible space $\mathcal A_j.$ 
        
Calculating both values from this minimization functional is a slight overkill, since the width $d_{j-1}$ in the $j$th step, can be found by the largest overlap of the sinc-function in the forward model and the peak in the data independent of their height, which itself is determined by the refractive indices of the different media. Thus, we can separate the extraction of the pair of parameters $(n,d)$ into two parts, similar to how it is done for stepwise-gradient methods. Firstly, the width $d$ of the layer is reconstructed, and secondly the refractive index.
We delay the justification of our arguments to \autoref{sec:width}. There we show (visually) that the functionals (in a step $j$) actually allow a precise prediction of the width $d_{j-1}$ without the knowledge of the refractive index $n_j.$        

We will therefore in the following consider the minimization problems with respect to $n_j$ for a known value $d_{j-1}$. We thus end up with the reduced minimization problem 
\begin{equation*}
\label{eq:min_problem}
\argmin_{n\in\mathcal A_j} \mathcal J^{(*)}_j(n), \quad 1\leq j \leq J+1,  \tag{\textasteriskcentered} 
\end{equation*}
for the functionals defined in \autoref{eq:min_func} and \autoref{eq:approx_functional}. We hereby assume that the value of $d_{j-1}$ has been reconstructed already and restrict the analysis to the refractive index argument. The second argument of the functional is therefore omitted.

To simplify the notation, we define for $j$ and $m$
\begin{equation}
\label{eq:short_ff}
\Lambda_{j,m} = \mathcal F_{k}\left(\sum_{l=1}^{j-1} \mathcal I_l\bm\chi_\mathcal S\right)(z_{j,m}),\quad \Gamma_{j,m}(n_j) = \mathcal F_k(\mathcal I_j[n_j]\bm\chi_\mathcal S)(z_{j,m}),\quad \Gamma^*_{j,m} = \mathcal F_{k}\left(\frac{\mathcal I^*_j[n]}{r^\dagger_j}\bm\chi_\mathcal S\right)(z_{j,m}),
\end{equation}
so that we can write
\[ \left|\mathcal F_k\left(\left(\sum_{l=1}^{j-1} \mathcal I_l + \mathcal I_j[n_j]\right)\bm\chi_\mathcal S\right)(z_{j,m})\right|^2 = \left| \Lambda_{j,m} + \Gamma_{j,m}(n_j)\right|^2 = F_{j,m}(n_j) \]
and
\[ \left|\mathcal F_k\left(\left(\sum_{l=1}^{j-1} \mathcal I_l + \mathcal I^*_j[n_j]\right)\bm\chi_\mathcal S\right)(z_{j,m})\right|^2 = \left| \Lambda_{j,m} + r_j^\dagger(n_j) \Gamma^*_{j,m}\right|^2 = F^*_{j,m}(n_j). \]
We note that by definition $\Gamma^*_{j,m}$ is independent of the refractive index, since $\mathcal I^*_j$ carries the directional independent reflection coefficient $r_j^\dagger,$ see \autoref{eq:integral_approx}.

Before we consider the minimization problem, we want to justify that the modeling error introduced by replacing $\mathcal J_j$ with the simplified functional $\mathcal J_j^*$ can be controlled.

\begin{lemma}
\label{lem:func_estimate}
For $2\leq j\leq J+1$ let $\mathcal J_j$ and $\mathcal J^*_j$ be defined by \autoref{eq:min_func} and \autoref{eq:approx_functional}, respectively. Then for a fixed value $d,$ the following estimate holds true for any $n$
\[ 
|\mathcal J_j(n) - \mathcal J^*_j(n)| \leq C \|\theta_t^{j-1}\|^2_{\infty,\tilde{\mathcal B}}, 
\]
where $\theta_t^{j-1}$ is the (small) angle of transmission (defined in \autoref{eq:angleoft}) and where we defined the supremum norm $\|h\|_{\infty,\tilde{\mathcal B}} = \sup_{\tilde\kappa\in \tilde{\mathcal B}}|h(\tilde\kappa)|$ on the reduced domain of integration $\tilde{\mathcal B},$ which is defined by $\tilde \kappa = \frac{\kappa}{k}$ for $\kappa \in\mathcal B$.
\end{lemma}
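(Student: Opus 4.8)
The plan is to reduce the difference of the two functionals to the pointwise discrepancy between the full reflection coefficient $r_j(\kappa)$ and its normal-incidence approximation $r_j^\dagger$, and then to invoke the quadratic estimate of \autoref{lem:ref_approx}. Writing each summand of $\mathcal J_j$ and $\mathcal J^*_j$ through the abbreviations $F_{j,m}(n)$ and $F^*_{j,m}(n)$ from \autoref{eq:short_ff}, I would first apply the algebraic identity $a^2-b^2=(a-b)(a+b)$ to each term and obtain
\[
\mathcal J_j(n)-\mathcal J^*_j(n)=\sum_{m=1}^{M_j}\bigl(F_{j,m}(n)-F^*_{j,m}(n)\bigr)\bigl(F_{j,m}(n)+F^*_{j,m}(n)-2y^{(\delta)}_{j,m}\bigr).
\]
Since the grid carries only finitely many points $M_j$, it suffices to bound each factor separately. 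The second factor I expect to be uniformly bounded, because each of $F_{j,m},F^*_{j,m}$ is the squared modulus of a Fourier transform of a bounded forward operator (using $|r_j|,|r_j^\dagger|,|r_{\le j-1}|\le1$, the Gaussian weight, and $|\sin|\le1$) and the data $y^{(\delta)}_{j,m}$ is fixed; hence the whole weight of the estimate rests on controlling $|F_{j,m}(n)-F^*_{j,m}(n)|$.

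For this I would set $A=\Lambda_{j,m}+\Gamma_{j,m}(n)$ and $B=\Lambda_{j,m}+r_j^\dagger\Gamma^*_{j,m}$, so that $F_{j,m}=|A|^2$, $F^*_{j,m}=|B|^2$, and
\[
\bigl|F_{j,m}(n)-F^*_{j,m}(n)\bigr|=\bigl||A|^2-|B|^2\bigr|\le|A-B|\,(|A|+|B|).
\]
Here $|A|+|B|$ is again uniformly bounded, while $A-B=\Gamma_{j,m}(n)-r_j^\dagger\Gamma^*_{j,m}=\mathcal F_k\bigl((\mathcal I_j[n]-\mathcal I^*_j[n])\bm\chi_{\mathcal S}\bigr)(z_{j,m})$. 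Comparing \autoref{eq:integral} with \autoref{eq:integral_approx}, the two forward operators differ only by the replacement of $r_j(\kappa)$ with the constant $r_j^\dagger$, so their difference is
\[
(\mathcal I_j[n]-\mathcal I^*_j[n])(k)=\frac{-k}{16\pi^3\rho}\int_{\mathcal B}\bigl(r_j(\kappa)-r_j^\dagger\bigr)r_{\le j-1}(\kappa)\,e^{-|\kappa|^2a}\sin(\cdots)\,d\kappa.
\]

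The key step is then the pointwise estimate of the reflection coefficient. Since $r_j(\kappa)$ is exactly the function $r$ of \autoref{lem:ref_approx} evaluated at the transmission angle $\theta^{j-1}_t(\kappa)$ (with $n=n_{j-1}$, $n'=n_j$), the quadratic bound derived in the proof of that lemma yields $|r_j(\kappa)-r_j^\dagger|\le C\,(\theta^{j-1}_t(\kappa))^2\le C\,\|\theta^{j-1}_t\|^2_{\infty,\tilde{\mathcal B}}$ uniformly over $\kappa\in\mathcal B$. Bounding the remaining factors by $|r_{\le j-1}|\le1$, $e^{-|\kappa|^2a}\le1$, $|\sin(\cdots)|\le1$, pulling the estimate through the integral over the bounded set $\mathcal B$, and then through the Fourier transform via $|\mathcal F_k(u)(z)|\le\frac1{\sqrt{2\pi}}\|u\|_{L^1(\mathcal S)}$, I obtain $|A-B|\le C\,\|\theta^{j-1}_t\|^2_{\infty,\tilde{\mathcal B}}$. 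Combining the three bounds and summing over the $M_j$ grid points gives the claim. The main obstacle I anticipate is bookkeeping rather than conceptual: one must check that every factor stripped off along the way — the combined transmission coefficient $r_{\le j-1}$, the Gaussian and sine factors, the $k$-dependent domain $\mathcal B$, and the data values — contributes only a constant bounded uniformly in the variable $n$, so that the single quadratic factor $\|\theta^{j-1}_t\|^2_{\infty,\tilde{\mathcal B}}$ survives. In particular one should verify that the constant from \autoref{lem:ref_approx} stays bounded as $n=n_j$ ranges over the admissible set $\mathcal A_j$, which it does since it decays like $n_{j-1}/n_j$ for large $n_j$.
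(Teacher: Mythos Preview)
Your proposal is correct and follows essentially the same route as the paper: both start from the identity $(F_{j,m}-y)^2-(F^*_{j,m}-y)^2=(F_{j,m}-F^*_{j,m})(F_{j,m}+F^*_{j,m}-2y)$, reduce the problem to bounding $|F_{j,m}-F^*_{j,m}|$, identify this with the Fourier transform of $\mathcal I_j-\mathcal I_j^*$ whose integrand carries the factor $r_j(\kappa)-r_j^\dagger$, and then invoke \autoref{lem:ref_approx} for the quadratic estimate in $\theta_t^{j-1}$. The paper's version is a bit more explicit---it rewrites the factorization as $(F_{j,m}-F^*_{j,m})^2+2(F^*_{j,m}-y)(F_{j,m}-F^*_{j,m})$, expands $F_{j,m}-F^*_{j,m}$ into $2\Re\{\Lambda\overline{(\Gamma-r_j^\dagger\Gamma^*)}\}+(|\Gamma|^2-|r_j^\dagger\Gamma^*|^2)$, and computes a concrete upper bound $U_p$ for the Gaussian integral---but your cleaner use of $||A|^2-|B|^2|\le|A-B|(|A|+|B|)$ achieves the same thing with less bookkeeping.
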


\begin{proof}
We first calculate the difference, which using $F_{j,m} = F_{j,m}-F^*_{j,m}+F^*_{j,m}$ is given by 
\begin{multline*}
\mathcal J_j(n) - \mathcal J^*_j(n) = \sum_{m=1}^{M_j} (F_{j,m}(n) - F^*_{j,m}(n))\left(F_{j,m}(n) + F^*_{j,m}(n) - 2 y_{j,m}^{(\delta)}\right) \\
= \sum_{m=1}^{M_j} (F_{j,m}(n) - F^*_{j,m}(n))^2 + 2\left(F^*_{j,m}(n) - y_{j,m}^{(\delta)}\right)(F_{j,m}(n) - F^*_{j,m}(n)) .
\end{multline*}
By using the definition of the forward model, we find that the essential part is to find an upper bound for
\begin{multline*}
F_{j,m} - F^*_{j,m} = 2 \Re\left\{\Lambda_{j,m}\overline{(\Gamma_{j,m}-r_j^\dagger\Gamma^*_{j,m})}\right\} + \left(\left|\Gamma_{j,m}\right|^2 - \left|r_j^\dagger\Gamma^*_{j,m}\right|^2 \right)\\
\leq 2 |\Lambda_{j,m}|\left|\Gamma_{j,m}-r_j^\dagger\Gamma^*_{j,m}\right| + \left(\left|\Gamma_{j,m}\right|^2 - \left|r_j^\dagger\Gamma^*_{j,m}\right|^2 \right).
\end{multline*}
First, we need to estimate the integral 
\[
\frac{1}{16 \pi^3 \rho }\int_{\mathcal S}k\int_{\mathcal B} e^{-|\kappa|^2 a}d \kappa d k.
\] 
After a change of coordinates $\kappa = k \tilde \kappa,$ where $\tilde\kappa$ is in the reduced set $\tilde{\mathcal B}$, we can replace $\tilde{\mathcal B}$ by the disk $D_{\sin(2\theta_\Omega+\theta)}$ in the integral and find an upper bound
\[
\frac{1}{16 \pi^3 \rho }\int_{\mathcal S}k\int_{\tilde{\mathcal B}} k^2 e^{-k^2|\tilde\kappa|^2 a}d \tilde\kappa d k \leq \frac{ 1 }{32 \pi^2  \sqrt{2\pi}\rho a}\left( k^2\Big|_{\partial \mathcal S} + \frac{e^{-\sin^2(2\theta_\Omega+\theta)k^2 a}\Big\vert_{\partial \mathcal S}}{\sin^2(2\theta_\Omega+\theta) a} \right) = U_p.
\]
For any value of $n$ it holds that the supremum norm $\|r_{\leq j-1}\|_{\infty,\tilde{\mathcal B}} = \sup_{\tilde\kappa\in \mathcal B} |r_{\leq j-1}(\tilde\kappa)|\leq 1.$ Thus, we obtain
\[
|\Gamma_{j,m}(n)| \leq  \sup_n \|r_j(.\, ,n)\|_{\infty,\tilde{\mathcal B}} U_p,\quad |r_j^\dagger(n)\Gamma^*_{j,m}| \leq \sup_n |r^\dagger_j(n)| U_p .    
\]
The difference between the actual function and its approximated form then is estimated by 
\begin{align*}
\int_{\mathcal S} \left|\mathcal I_j[n]- \mathcal I^*_j[n]\right| d k &\leq \frac{1}{16\pi^3\rho}\int_{\mathcal S} k \int_{\mathcal B} \left|r_j(\kappa,n)-r_j^\dagger(n)\right| e^{-|\kappa|^2 a} |r_{\leq j-1}(\kappa)| d\kappa \\
& \leq \frac{\|\theta_t^{j-1}\|^2_{\infty,\tilde{\mathcal B}}}{2} U_p.
\end{align*}
We combine all the above estimates to find
\begin{align*}
\left|\Gamma_{j,m}\right|^2 - \left|r_j^\dagger\Gamma^*_{j,m}\right|^2 &\leq \left|\Gamma_{j,m}-r_j^\dagger \Gamma^*_{j,m}\right| \left(|\Gamma_{j,m}|+\left|r_j^\dagger \Gamma^*_{j,m}\right|\right)\\
&\leq \left(\frac{\|\theta_t^{j-1}\|^2_{\infty,\tilde{\mathcal B}}}{2}\left(\sup_n \|r_j(.\, ,n)\|_{\infty,\tilde{\mathcal B}} +\sup_n |r_j^\dagger(n)|\right)\right)U_p^2.
\end{align*}
Finally, we find an upper bound for
\begin{align*}
2|\Lambda_{j,m}| \left|\Gamma_{j,m}-r_j^\dagger\Gamma^*_{j,m}\right| &= 2 \left| \sum_{l=1}^{j-1}\mathcal F_k\left(\mathcal I_l\bm\chi_\mathcal S\right)(z_{l,m})\right| \left|\Gamma_{j,m}-r_j^\dagger\Gamma^*_{j,m}\right| \\
&\leq \left(\sum_{l=1}^{j-1} \|r_l\|_{\infty,\tilde{\mathcal B}}\|\theta_t^{j-1}\|^2_{\infty,\tilde{\mathcal B}}\right) U_p^2.
\end{align*}
Thus, we get 
\begin{multline*}
|\mathcal J_j(n) - \mathcal J^*_j(n)| \leq \sum_{m=1}^{M_j} (F_{j,m}(n) - F^*_{j,m}(n))^2 + 2 \sum_{m=1}^{M_j} \left|\left(F^*_{j}(n) - y_j^{(\delta)}\right)_m\right| \left|F_{j,m}(n) - F^*_{j,m}(n)\right|\\
\leq M_j \left(\left(\sum_{l=1}^{j-1} \|r_l\|_{\infty,\tilde{\mathcal B}}\|\theta_t^{j-1}\|^2_{\infty,\tilde{\mathcal B}}\right) + \frac{\|\theta_t^{j-1}\|^2_{\infty,\tilde{\mathcal B}}}{2}\left(\sup_n \|r_j(.\, ,n)\|_{\infty,\tilde{\mathcal B}} +\sup_n |r_j^\dagger(n)|\right)\right)^2 U_p^4 \\+ 2 U_p^2 M_j \sup_m \left(\sup_n |(F^*_j(n) - y^{(\delta)}_j)_m|\right)  \Bigg(\left(\sum_{l=1}^{j-1} \|r_l\|_{\infty,\tilde{\mathcal B}}\|\theta_t^{j-1}\|^2_{\infty,\tilde{\mathcal B}}\right)\\
 +\frac{\|\theta_t^{j-1}\|^2_{\infty,\tilde{\mathcal B}}}{2}\left(\sup_n \|r_j(.\, ,n)\|_{\infty,\tilde{\mathcal B}} +\sup_n |r_j^\dagger(n)|\right)\Bigg). 
\end{multline*}
We summarize all coefficients of $\|\theta_t^{j-1}\|^2_{\infty,\tilde{\mathcal B}}$ in a constant $C$ and obtain the desired result.
\end{proof}
The estimate and its proof for $j=1$ is similar 
\begin{multline*}
|\mathcal J_1(n) - \mathcal J^*_1(n)| \leq M_1 \left( \frac{\|\theta_t^{0}\|^2_{\infty,\tilde{\mathcal B}}}{2}\left(\sup_n \|r_1(.\, ,n)\|_{\infty,\tilde{\mathcal B}} +\sup_n |r_1^\dagger(n)|\right)\right)^2 U_p^4 \\+ 2 U_p^2 M_1 \sup_m \left(\sup_n |(F^*_1(n) - y^{(\delta)}_1)_m|\right)  \Bigg( \frac{\|\theta_t^{0}\|^2_{\infty,\tilde{\mathcal B}}}{2}\left(\sup_n \|r_1(.\, ,n)\|_{\infty,\tilde{\mathcal B}} +\sup_n |r_1^\dagger(n)|\right)\Bigg).
\end{multline*}

\section{Existence and Uniqueness Results}
\label{sec:inverse_EaU}
We finally turn to the question if the minimization problem \eqref{eq:min_problem} has a unique minimizer. Actually, under the assumption that the data is an element of the range of the forward model, that is for any $m$ we have
\begin{equation}
\label{eq:data_range}
y_{j,m} = |\Lambda_{j,m} + r^\dagger_j(\tilde n) \Gamma^*_{j,m}|^2,\quad \text{for some}\quad \tilde n\in\mathcal A_j, 
\end{equation}  
we can show that there exists a unique solution to the minimization problem \eqref{eq:min_problem} corresponding to the approximated functional in \autoref{eq:approx_functional}. 

For $j=1,$ we have that $\Lambda_{1,m} = 0,$ for all $m,$ which reduces the functional $\mathcal J^*_1$ to 
\[
\mathcal J^*_1(n) = \sum_{m=1}^{M_1}\left(\left|r^\dagger_1(n)\Gamma^*_{1,m} \right|^2 -y^{(\delta)}_{1,m}\right)^2. 
\] 
This functional, even in the best case, where the data is in the range of the forward model, attains two global maxima. In order to exclude one of the possible solutions, we use the fact that the object is surrounded by air and that the refractive index of the first layer thus has to satisfy $n_1 > 1.$ 

For $j\geq 2,$ we obtain the uniqueness from the definition of the functional, which allows -- in contrast to $j=1$ -- the calculation of the reflection coefficient from a second order polynomial equation.  
For this reason, we discuss the cases $j=1$ and $j\geq 2$ separately.

\begin{theorem}
\label{thm:unique_min}
For $j=1$ let $\mathcal J^*_1$ be defined as in \autoref{eq:approx_functional} and let $y_1$ satisfy the range condition \autoref{eq:data_range} for some $\tilde n_1 >1$. Then there exists a unique solution to the minimization problem \eqref{eq:min_problem}.
\end{theorem}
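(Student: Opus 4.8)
The plan is to reduce the minimization of $\mathcal J^*_1$ to a one-dimensional algebraic equation in the scalar $|r_1^\dagger(n)|$ and then to exploit the strict monotonicity of $n\mapsto|r_1^\dagger(n)|$ on the admissible set. First I would use that for $j=1$ the previous-layer contribution vanishes, $\Lambda_{1,m}=0$, so that $F^*_{1,m}(n)=|r_1^\dagger(n)\Gamma^*_{1,m}|^2=|r_1^\dagger(n)|^2|\Gamma^*_{1,m}|^2$, where I exploit that $r_1^\dagger(n)$ is real and that $\Gamma^*_{1,m}$ is, by its definition in \autoref{eq:short_ff}, independent of $n$. Inserting the range condition \autoref{eq:data_range}, which in the noise-free case reads $y_{1,m}=|r_1^\dagger(\tilde n_1)|^2|\Gamma^*_{1,m}|^2$, the functional factorizes as
\[
\mathcal J^*_1(n)=\left(|r_1^\dagger(n)|^2-|r_1^\dagger(\tilde n_1)|^2\right)^2\sum_{m=1}^{M_1}|\Gamma^*_{1,m}|^4.
\]
Since the grid points $z_{1,m}$ lie in the interval $\mathcal U_1$ around the peak of $\mathcal F_k(\mathcal I^*_1)$, the prefactor $C_1:=\sum_m|\Gamma^*_{1,m}|^4$ is a strictly positive constant (a minor non-degeneracy check), so that $\mathcal J^*_1\ge0$ with equality exactly when $|r_1^\dagger(n)|=|r_1^\dagger(\tilde n_1)|$. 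Hence the minimizers of $\mathcal J^*_1$ coincide with the solution set of this single scalar equation, and $n=\tilde n_1$ is certainly one of them, giving existence.

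Next I would analyze $n\mapsto|r_1^\dagger(n)|$ on $\mathcal A_1$. With $n_0=1$ we have $r_1^\dagger(n)=\frac{1-n}{1+n}$, which is negative for $n>1$, so that $|r_1^\dagger(n)|=\frac{n-1}{n+1}$. Its derivative $\frac{2}{(n+1)^2}$ is strictly positive, hence $n\mapsto|r_1^\dagger(n)|$ is strictly increasing and therefore injective on $(1,\infty)$. Consequently the equation $|r_1^\dagger(n)|=|r_1^\dagger(\tilde n_1)|$ has the unique admissible solution $n=\tilde n_1$, which is then the unique minimizer.

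The main obstacle is the reciprocal ambiguity, which is why the hypothesis $\tilde n_1>1$ is indispensable: $\mathcal J^*_1$ depends on $n$ only through $|r_1^\dagger(n)|^2=\frac{(1-n)^2}{(1+n)^2}$, and solving $\frac{(1-n)^2}{(1+n)^2}=c$ for $c\in(0,1)$ yields the two roots $\frac{1+\sqrt c}{1-\sqrt c}$ and $\frac{1-\sqrt c}{1+\sqrt c}$, which are reciprocals of one another and lie on opposite sides of $n=1$. This is precisely the second global minimizer alluded to in the discussion preceding the theorem. The role of the admissibility restriction $n>1$ encoded in $\mathcal A_1$ (here amounting to $n\ge1$ together with $n\neq n_0=1$) is exactly to discard the spurious root $1/\tilde n_1<1$ and thereby to collapse the symmetric pair of minima to a single admissible one. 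I would therefore stress in the write-up that the physical constraint is not cosmetic but is what restores uniqueness, and that every other step is an elementary factorization or a monotonicity argument.
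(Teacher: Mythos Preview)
Your argument is correct and hits the same key points as the paper's proof: for $j=1$ one has $\Lambda_{1,m}=0$, the functional depends on $n$ only through $|r_1^\dagger(n)|^2$, the scalar equation $|r_1^\dagger(n)|^2=|r_1^\dagger(\tilde n_1)|^2$ has the two reciprocal roots, and the admissibility constraint $n>1$ singles out $\tilde n_1$. The route differs slightly in presentation: the paper differentiates $\mathcal J^*_1$ and locates all critical points (picking up the additional stationary point $n=1$, which it then discards), whereas you factorize $\mathcal J^*_1(n)=C_1\big(|r_1^\dagger(n)|^2-|r_1^\dagger(\tilde n_1)|^2\big)^2$ directly and use non-negativity to characterize the global minimizers as the zeros. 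Your version is a touch more economical, since it bypasses the classification of critical points and the subsequent second-derivative lemma; the paper's version, on the other hand, makes the local convexity at the minimizer explicit, which feeds into its later discussion of stability under noise. Either way the substance is identical.
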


\begin{proof}
We take the derivative with respect to $n$ and find
\[
\partial_n \mathcal J^*_1(n) = 4 r_1^\dagger(n) \partial_n r_1^\dagger(n) \sum_{m=1}^{M_1} \left(\left|\Gamma^*_{1,m}\right|^2  \left(\big|r_1^\dagger(n)\big|^2 - \big|r_1^\dagger(\tilde n_1)\big|^2\right)\right) \big|\Gamma^*_{1,m}\big|^2.
\]
Since we have $\partial_n r_1^\dagger(n) = \frac{-2}{(1+n)^2} \neq 0$ the derivative of $\mathcal J^*_1$ is zero if and only if $r_1^\dagger(n) = 0$ which implies $n = 1$ or if $r_1^\dagger$ satisfies
\begin{equation}
\label{eq:refl_square}
 \big |r_1^\dagger(n)\big|^2 =  \big |r_1^\dagger(\tilde n_1)\big|^2.
\end{equation}
Since we have excluded $n = 1$ (as refractive index from the previous layer) from the possible solutions, we find that $r_1^\dagger(n) = \pm r_1^\dagger(\tilde n_1),$ which further yields 
\[
n_{\pm} = \frac{1\mp r_1^\dagger(\tilde n_1)}{1\pm\tilde r_1^\dagger(\tilde n_1)}.
\]
Since $\sign(r_1^\dagger(\tilde n_1)) = -1,$ due to the assumption that $\tilde n_1 >1,$ we can also exclude $n_-$ from the set of possible solutions. Otherwise it would hold that $n<1.$ 

The non-negativity of the functional $\mathcal J^*_1$ and the fact that $\mathcal J^*_1(n_+) = 0$ implies that $n_+$ is actually the global minimum.  
\end{proof}

\begin{lemma}
Let $n$ be a minimum of $\mathcal J^*_1,$ then we have $\partial^2_n\mathcal J^*_1(n) = \mathcal P >0.$
\end{lemma}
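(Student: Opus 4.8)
The plan is to exploit the simple product structure that $\mathcal J^*_1$ inherits from $j=1$, where $\Lambda_{1,m}=0$. First I would abbreviate $\phi(n)=\big(r_1^\dagger(n)\big)^2$ and $A_m=|\Gamma^*_{1,m}|^2\ge 0$, so that
\[ \mathcal J^*_1(n) = \sum_{m=1}^{M_1}\big(\phi(n)A_m - y^{(\delta)}_{1,m}\big)^2. \]
Differentiating twice with respect to $n$ and collecting terms gives
\[ \partial_n^2\mathcal J^*_1(n) = 2\phi''(n)\sum_{m=1}^{M_1}A_m\big(\phi(n)A_m - y^{(\delta)}_{1,m}\big) + 2\big(\phi'(n)\big)^2\sum_{m=1}^{M_1}A_m^2, \]
so the second derivative splits into a residual-weighted piece carrying $\phi''$ and a manifestly sign-definite piece carrying $(\phi')^2$.

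The key observation is that at the global minimizer $n_+$ from \autoref{thm:unique_min} the range condition \autoref{eq:data_range} forces $\mathcal J^*_1(n_+)=0$. Since every summand of $\mathcal J^*_1$ is a square, this means each residual $\phi(n_+)A_m - y^{(\delta)}_{1,m}$ vanishes individually, and therefore the first sum in the expression above is exactly zero. The entire $\phi''$-contribution drops out, leaving
\[ \partial_n^2\mathcal J^*_1(n_+) = 2\big(\phi'(n_+)\big)^2\sum_{m=1}^{M_1}A_m^2. \]

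It then remains to verify strict positivity of this quantity. Writing $\phi'(n)=2r_1^\dagger(n)\,\partial_n r_1^\dagger(n)$, I would use that $\partial_n r_1^\dagger(n)=\frac{-2}{(1+n)^2}\neq 0$ for all $n$, together with $r_1^\dagger(n_+)\neq 0$, which holds because $n_+>1$ was already established in the proof of \autoref{thm:unique_min}. Hence $\phi'(n_+)\neq 0$. Combined with $\sum_{m}A_m^2=\sum_m|\Gamma^*_{1,m}|^4>0$ (the recorded signal is not identically zero), this yields $\mathcal P := 2\big(\phi'(n_+)\big)^2\sum_m|\Gamma^*_{1,m}|^4>0$, as claimed.

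The only delicate point, and the step I would flag as the crux, is the vanishing of the residual-weighted term: this relies entirely on the range assumption, which makes all residuals vanish exactly at $n_+$. For perturbed data $y^{(\delta)}$ lying off the range of the forward model, the $\phi''$-contribution would survive and could in principle destroy definiteness, so the statement is genuinely tied to the exact-data setting inherited from \autoref{thm:unique_min}. Everything else is a routine two-fold differentiation and sign check.
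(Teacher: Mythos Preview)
Your argument is correct and coincides with the paper's: both compute the second derivative, use that the residuals vanish at the minimizer (you via $\mathcal J^*_1(n_+)=0$ from the range condition, the paper via $\partial_n\mathcal J^*_1(n)=0$ and \autoref{eq:refl_square}), and arrive at the identical formula $\mathcal P = 8\,(r_1^\dagger\,\partial_n r_1^\dagger)^2\sum_m|\Gamma^*_{1,m}|^4>0$. Your remark that the $\phi''$-term survives for off-range data is a fair caveat but lies outside the hypotheses inherited from \autoref{thm:unique_min}.
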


\begin{proof}
Let $n$ satisfy $\partial_n \mathcal J^*_1(n) = 0.$ Then according to the proof of \autoref{thm:unique_min} the second derivative of $\mathcal J^*_1$ with respect to $n$ is given by 
\[
\partial^2_n\mathcal J^*_1(n) = 8\sum_{m=1}^{M_1} \left|\Gamma^*_{1,m}\right|^4 (r_1^\dagger \partial_n r_1^\dagger )^2 = \mathcal P.
\]
Thus, together with $r_1^\dagger \partial_n r_1^\dagger  = \frac{1-n}{(1+n)^3} \neq 0,$ we immediately find that 
\[
\partial^2_n\mathcal J^*_1(n) = \mathcal P >0.
\]
\end{proof}

We formulate the uniqueness for the deeper interfaces in the case $j=2$. The generalization to general $j$ is straightforward.

\begin{proposition}
\label{lem:loc_extrema}
Let the functional $\mathcal J^*_2$ be defined as in \autoref{eq:approx_functional} and let the data $y_2$ satisfy the range condition \autoref{eq:data_range} for some $\tilde r_2:=r_2^\dagger(\tilde n_2)\in[-1,\,1].$ Then, for a suitable choice of discretization points $z_{2,m}$, $m\in\{1,\ldots,M_2\}$ with $M_2>1$, the value $\tilde n_2$ is the unique minimizer of $\mathcal J^*_2$:
\[ \mathcal J^*_2(\tilde n_2) = 0\quad\text{and}\quad\mathcal J^*_2(n) > 0\quad\text{for all}\quad n\ne\tilde n_2. \]
\end{proposition}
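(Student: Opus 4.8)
The plan is to exploit that, once the index $n_1$ of the first layer has been fixed, the quantity $F^*_{2,m}(n)=|\Lambda_{2,m}+r_2^\dagger(n)\Gamma^*_{2,m}|^2$ depends on $n$ only through the single real scalar $r:=r_2^\dagger(n)=\frac{n_1-n}{n_1+n}$ (recall that $\Gamma^*_{2,m}$ and $\Lambda_{2,m}$ are independent of $n$ by \autoref{eq:short_ff}), and that $r\mapsto F^*_{2,m}$ is a quadratic polynomial. Since the data satisfies the range condition \autoref{eq:data_range} with $\tilde r_2=r_2^\dagger(\tilde n_2)$, every summand of $\mathcal J^*_2$ vanishes at $n=\tilde n_2$, so $\mathcal J^*_2(\tilde n_2)=0$; together with $\mathcal J^*_2\ge 0$ this already shows that $\tilde n_2$ is a global minimizer. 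It thus remains to prove that it is the \emph{only} zero of $\mathcal J^*_2$.

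First I would characterize the zero set. As $\mathcal J^*_2$ is a sum of squares, $\mathcal J^*_2(n)=0$ holds if and only if $F^*_{2,m}(n)=y_{2,m}=F^*_{2,m}(\tilde n_2)$ for every $m\in\{1,\dots,M_2\}$. Writing $\tilde r=\tilde r_2$ and expanding the moduli, the $m$th equation reduces, after factoring the difference of two quadratics, to
\[ (r-\tilde r)\Big(2\Re\big(\Lambda_{2,m}\overline{\Gamma^*_{2,m}}\big)+(r+\tilde r)\,|\Gamma^*_{2,m}|^2\Big)=0. \]
Hence for each $m$ either $r=\tilde r$, or the bracket vanishes. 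If $r\neq\tilde r$, the vanishing of the bracket for all $m$ forces
\[ r+\tilde r=-\frac{2\Re\big(\Lambda_{2,m}\overline{\Gamma^*_{2,m}}\big)}{|\Gamma^*_{2,m}|^2}\qquad\text{for all }m, \]
so the right-hand side, which is precisely the second root of the quadratic $F^*_{2,m}(\cdot)-y_{2,m}$, must take one and the same value for every $m$.

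The crux is therefore the selection of the discretization points. I would choose the nodes $z_{2,m}\in\mathcal U_2$ so that $\Gamma^*_{2,m}\neq 0$ and so that the map $z\mapsto \Re\big(\Lambda_2(z)\overline{\Gamma^*_2(z)}\big)/|\Gamma^*_2(z)|^2$ takes at least two distinct values among the chosen nodes; this is possible whenever $M_2>1$ and this ratio is non-constant on $\mathcal U_2$. Non-constancy follows from the explicit band-limited representations in \autoref{lem:sinc_representation} and \autoref{lem:dom_terms}: on $\mathcal U_2$ the factor $\Gamma^*_2$ is governed by the reference profile $U_2$ centered at $\Delta_2$ (see \autoref{eq:reference_function}), whereas $\Lambda_2=\mathcal F_k(\mathcal I_1\bm\chi_\mathcal S)$ is the slowly varying sinc-tail of the first interface centered at $\Delta_1$; the two have genuinely different $z$-profiles, so their ratio cannot be constant. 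With such nodes the displayed consistency condition fails, ruling out the case $r\neq\tilde r$.

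Consequently $r=\tilde r$, and since $n\mapsto r_2^\dagger(n)=\frac{n_1-n}{n_1+n}$ has non-vanishing derivative $\frac{-2n_1}{(n_1+n)^2}$ and is therefore strictly monotone on the admissible set $\mathcal A_2$, this yields $n=\tilde n_2$, so $\tilde n_2$ is the unique minimizer. I expect the main obstacle to be exactly this last structural point—verifying that a genuinely \emph{suitable} choice of nodes exists, i.e.\ that the second-root ratio is non-constant on $\mathcal U_2$—since this is the only step where the specific sinc geometry of $\Lambda_2$ and $\Gamma^*_2$, rather than mere algebra, must be invoked, and where one must also ensure the chosen nodes avoid the zeros of $\Gamma^*_2$.
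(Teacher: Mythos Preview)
Your proposal is correct and follows essentially the same route as the paper: factor each summand as $(r-\tilde r)$ times a linear expression in $r$, observe that a second common zero would force $-2\Re(\Lambda_{2,m}\overline{\Gamma^*_{2,m}})/|\Gamma^*_{2,m}|^2$ to be independent of $m$, and rule this out via the explicit sinc representations from \autoref{lem:dom_terms} and \autoref{eq:dom_terms_squared_j}. You are in fact slightly more careful than the paper in two places: you explicitly invoke the strict monotonicity of $n\mapsto r_2^\dagger(n)$ to pass from $r=\tilde r$ back to $n=\tilde n_2$, and you note that the chosen nodes must avoid the zeros of $\Gamma^*_2$.
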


\begin{proof}
Using the abbreviations from \autoref{eq:short_ff}, the non-negative functional $\mathcal J^*_2$ can be written as
\begin{equation}
\label{eq:redefine}
\begin{split}
\mathcal J^*_2(n) &= \sum_{m=1}^{M_2}\left(\left|\Gamma^*_{2,m}\right|^2((r_2^\dag(n))^2 - \tilde r_2^2) + 2\Re\{\Lambda_{2,m}\overline{\Gamma^*_{2,m}}\}(r_2^\dag(n) - \tilde r_2) \right)^2 \\
&= (r_2^\dag(n) - \tilde r_2)^2\sum_{m=1}^{M_2}\left(\left|\Gamma^*_{2,m}\right|^2(r_2^\dag(n) + \tilde r_2) + 2\Re\{\Lambda_{2,m}\overline{\Gamma^*_{2,m}}\} \right)^2 
\end{split}
\end{equation}
Thus, $\mathcal J^*_2$ vanishes if either $r_2^\dag(n)=\tilde r_2$ or
\[ \sum_{m=1}^{M_2}\left(\left|\Gamma^*_{2,m}\right|^2(r_2^\dag(n) + \tilde r_2) + 2\Re\{\Lambda_{2,m}\overline{\Gamma^*_{2,m}}\} \right)^2 = 0. \]
Since all terms in this sum are non-negative, it can only vanish if all of them are zero, that is, if we have
\begin{equation}\label{eq:two_points}
r_2^\dag(n) + \tilde r_2 = -\frac{2\Re\{\Lambda_{2,m}\overline{\Gamma^*_{2,m}}\}}{\left|\Gamma^*_{2,m}\right|^2} = -2\Re\frac{\Lambda_{2,m}}{\Gamma^*_{2,m}}
\end{equation}
for all $m\in\{1,\ldots,M_2\}$ where $\Gamma^*_{2,m}\ne0$.

Inserting the definitions of $\Lambda_{2,m}$ and $\Gamma^*_{2,m}$, this becomes the condition that the function
\[ z\mapsto\Re\frac{\left(\mathcal F_{k}(\mathcal I_1\bm \chi_\mathcal S)\right)(z)}{\left(\mathcal F_{k}\left(\frac{\mathcal I_2^*[n]}{r_j^\dag(n)}\bm \chi_\mathcal S\right)\right)(z)} \]
is constant, at least restricted to the values $z_{2,m}$. However, we see from the asymptotic expression \autoref{eq:dom_terms} for $\mathcal F_{k}(\mathcal I^*_1[n_1]\bm \chi_\mathcal S)$ and the correspondingly adapted expression for the second interface (as in \autoref{eq:dom_terms_squared_j}) that this function (in the leading order terms) is not constant. Therefore, for a suitable choice of discretization points $z_{2,m}$, we can guarantee that there are at least two values $m_1$, $m_2$ for which
\[ 2\Re\frac{\Lambda_{2,m_1}}{\Gamma^*_{2,m_1}}\ne2\Re\frac{\Lambda_{2,m_2}}{\Gamma^*_{2,m_2}}, \]
thus excluding any zero values of $\mathcal J_2^*$ other than $\tilde r_2$.
\end{proof}

\begin{remark}
So far we have considered the case where the data is in the range of the forward model without any noise. Noisy data is treated similarly. We note that since the positive functional $\mathcal J^*_j$ (also with noise) remains a fourth order polynomial in $r^\dagger_j,$ the existence of the local minima with respect to $n$ is guaranteed. One is located on the half $(1,n_{j-1}),$ the other one on $(n_{j-1},\infty).$ The functional $\mathcal J_j^*$ (as a continuous function of $n$) changes also continuously depending on how the data is disturbed. This means, that up to a certain (small) level of noise, the position of the global minimum remains in the correct position. However, cases where the noise is such that the position of the global minimum might jump to the other side cannot be excluded.    
\end{remark}

\subsection*{Sign determination} 
A difficulty, which we are facing in the reconstruction of the first layer in \autoref{thm:unique_min} (and which has been identified also in the reconstruction method shown in \cite{ElbMinVes21}) is that the minimum of the functional can only be determined up to its absolute value. In \cite{ElbMinVes21}, we had in each step two local (and also global) minima, where we excluded one by using a priori information on the refractive indices of the layers.
  
Additional noise on the data may lead to the case where the functional (for the refractive reconstruction from layers deeper inside the object) attains equal values for the two local minima. In this case, a distinction of the correct solution to the minimization problem \eqref{eq:min_problem} is no longer possible. Now we want to show from a very theoretical point of view how one may exclude one of the possible solutions.
     
In detail, let $j\geq 2$ and let the refractive indices and the widths $n_l,d_{l-1},\,1\leq l\leq j-1,$ be determined and assume that the data in the $j$th step is within the range of the forward operator $\mathcal I^*_j$ for some 
\[
\tilde r_j = r_j^\dagger(\tilde n_j) = \frac{n_{j-1}-\tilde n_j}{n_{j-1}+\tilde n_j}.
\] 
Hence, $\tilde r_j$ (or $\tilde n_j$) is the objective to be reconstructed in this step. What we have seen so far is that the minima with respect to $n$ are distributed such that one is located on each side of the point $n_{j-1},$ the refractive index from the previous layer. 

Then, the position of $\tilde n_j$ with respect to $n_{j-1},$ in detail the difference $n_{j-1}-\tilde n_j,$ determines 
\[
\begin{cases} \sign(\tilde r_j)=1,&\text{if}\quad \tilde n_j < n_{j-1},\\
 \sign(\tilde r_j)=-1,&\text{if}\quad \tilde n_j > n_{j-1}.
\end{cases} 
\]
Thus, the correct solution $\tilde n_j$ of the minimization problem -- in step $j$ -- is completely connected to the sign of the reflection coefficient.

If one is able to determine the sign of the reflection coefficient $\tilde r_j$ from the data, one already the determines the position of the exact solution with respect to $n_{j-1}.$ Therefore, while searching for the minimum of the functional, we may restrict to a certain half, $(1,n_{j-1})$ or $(n_{j-1},\infty),$ of the admissible values.

Basically the goal is to read off the sign from the measurement data by avoiding the highly frequent parts originating from the exponential factor $e^{-i \bk k\,.\,}$, see \autoref{eq:inv_layer}. To do so, we multiply the equation with the exponential factor with opposite sign and obtain the sign of $\tilde r_j$ in the neighbourhood of $z = \Delta_j.$
We present this method for small values $\theta_\Omega,$ where we by using \autoref{lem:dom_terms} can calculate the Fourier transform of the integral in \autoref{eq:integral_approx} explicitly in the leading order.
For simplicity, we consider in the following the single reflection from the first interface, meaning we consider the case $j=1.$ 
\begin{lemma}
\label{lem:sign_determination}
Let the asymptotic behavior as in \autoref{lem:dom_terms} hold true, meaning that the Fourier transform of the data, $\mathcal F_k\left(\mathcal C^1\bm\chi_\mathcal S\right),$ is given by \autoref{eq:dom_terms} for a $r_1^\dagger.$ Further, let $\Delta_1$ and $\xi_1$ be defined as in \autoref{eq:new_variables}. Then we obtain 
\begin{align}
\label{eq:real_part}
\Re\Big\{\mathcal F_k\left(\mathcal C^1\bm\chi_\mathcal S\right)&(z+\Delta_1)e^{i \bar k z}\Big\} = \frac{1}{8\pi^2\sqrt{2\pi}}\frac{\lk k \bk k^2 r_1^\dagger}{\rho \psi_0}\Bigg(\sincs(\lk kz) - e^{-\bk k^2\gamma}\cos(\bk k\xi_1)\sincs(\lk k(z+\xi_1)) \\
&+ \mathcal O(\lk k\sqrt\gamma)\left(1+\frac{a\bk k}{\psi_0}\right) + \mathcal O\left(\frac{1}{\lk k\Delta_1}\right) +\mathcal O\left(\frac{a\bk k}{\psi_0}\bk k^{-1} \lk k\right) + \mathcal O(\bk k^{-1}\lk k)\Bigg)  \nonumber
\end{align}
for $z+\Delta_1>0.$ Furthermore, in a sufficiently small neighbourhood of $z=0$ we have that 
\[
\sign(r_1^\dagger) =  \sign\left(\Re\left\{\mathcal F_k\left(\mathcal C^1 \bm\chi_\mathcal S\right)(z+\Delta_1) e^{i\bar k z}  \right\}\right).
\]
\end{lemma}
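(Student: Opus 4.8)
The plan is to establish the two assertions separately: first to obtain the real-part representation \autoref{eq:real_part} from the asymptotic expansion \autoref{eq:dom_terms}, and then to read off the sign of $r_1^\dagger$ from it in a neighbourhood of $z=0$.

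For the first part, I would start from the hypothesis that $\mathcal F_k(\mathcal C^1\bm\chi_\mathcal S)(z+\Delta_1)e^{i\bk k z}$ is given by the right-hand side of \autoref{eq:dom_terms}, with $\Delta_0$ and $\xi$ replaced by $\Delta_1$ and $\xi_1$ (these coincide for $j=1$ by \autoref{eq:new_variables}). Since $r_1^\dagger\in\R$, the scalar prefactor $-\frac1{8\pi^2\sqrt{2\pi}}\frac{\lk k\bk k^2 r_1^\dagger}{\rho\psi_0}$ is real, so taking real parts acts only on the bracket. In the two leading terms the functions $\sincs(\lk k z)$ and $\sincs(\lk k(z+\xi_1))$ are real and the sole complex factor is $e^{-i\bk k\xi_1}$, whose real part is $\cos(\bk k\xi_1)$; pulling the overall minus sign of the prefactor into the bracket then flips the two signs and produces exactly the leading terms $\sincs(\lk k z)-e^{-\bk k^2\gamma}\cos(\bk k\xi_1)\sincs(\lk k(z+\xi_1))$ of \autoref{eq:real_part}. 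The term carrying the factor $-2i$ in \autoref{eq:dom_terms} contributes to the real part only through the imaginary part of its content, which always comes multiplied by $\lk k\bk k^{-1}$ or $\frac{a\bk k}{\psi_0}$; together with the remaining $\mathcal O$-terms of \autoref{eq:dom_terms}, which pass unchanged through $\Re$, these are absorbed into the error terms displayed in \autoref{eq:real_part}. This part is essentially bookkeeping of real and imaginary parts.

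For the sign statement I would evaluate the leading bracket of \autoref{eq:real_part} at $z=0$, where $\sincs(0)=1$, giving $1-e^{-\bk k^2\gamma}\cos(\bk k\xi_1)\sincs(\lk k\xi_1)$. Because $\gamma=a\sin^2\theta>0$ we have $e^{-\bk k^2\gamma}<1$, and since $|\cos(\bk k\xi_1)|\le1$ and $|\sincs(\lk k\xi_1)|\le1$ the subtracted product has modulus at most $e^{-\bk k^2\gamma}$; hence the bracket at $z=0$ is bounded below by $1-e^{-\bk k^2\gamma}>0$. By continuity of $z\mapsto\sincs(\lk k z)-e^{-\bk k^2\gamma}\cos(\bk k\xi_1)\sincs(\lk k(z+\xi_1))$ this lower bound persists on a small neighbourhood of $z=0$. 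Invoking \autoref{as:three} to make all error terms in \autoref{eq:real_part} negligible against this fixed positive bound, the whole parenthesis stays strictly positive near $z=0$; since the remaining scalar prefactor $\frac1{8\pi^2\sqrt{2\pi}}\frac{\lk k\bk k^2}{\rho\psi_0}$ is strictly positive, the sign of $\Re\{\mathcal F_k(\mathcal C^1\bm\chi_\mathcal S)(z+\Delta_1)e^{i\bk k z}\}$ equals $\sign(r_1^\dagger)$, as claimed.

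The main obstacle is not the real-part algebra but the uniform domination of the error terms by the leading bracket. The subtlety is that $1-e^{-\bk k^2\gamma}$ must be a genuine (order-one) constant rather than itself a small quantity: although $\gamma=a\sin^2\theta$ is small, the combination $\bk k^2\gamma=\bk k^2 a\sin^2\theta$ is of order one in the experimental regime of \autoref{tab:small_parameters}, so $1-e^{-\bk k^2\gamma}$ is bounded away from zero and comfortably exceeds the errors $\mathcal O(\lk k\sqrt\gamma)$, $\mathcal O(\bk k^{-1}\lk k)$, $\mathcal O((\lk k\Delta_1)^{-1})$ and $\mathcal O(\frac{a\bk k}{\psi_0}\bk k^{-1}\lk k)$. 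Verifying this quantitative separation, and thereby fixing how small the neighbourhood of $z=0$ must be, is the crux of the argument.
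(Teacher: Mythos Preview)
Your proposal is correct and follows essentially the same route as the paper's proof: take the real part of \autoref{eq:dom_terms} term by term (the leading bracket contributes via $\Re(e^{-i\bk k\xi_1})=\cos(\bk k\xi_1)$, the $-2i(\cdot)$ line only via the imaginary part of its content and hence lands in the lower-order terms), then argue positivity of the leading bracket near $z=0$ and conclude the sign. Your treatment is in fact slightly more explicit than the paper's, which simply asserts that the sign of the leading bracket equals $\sign(\sincs(\lk kz))=1$ near $z=0$; your quantitative lower bound $1-e^{-\bk k^2\gamma}>0$ together with the observation that $\bk k^2\gamma$ is of order one makes the domination over the error terms transparent.
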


\begin{proof}
Using the real part of \autoref{eq:dom_terms}, we see that 
\[
\bk k^{-1}\lk k \big(\sincs'(\lk k(z+\xi_1)) + \sincs(\lk k(z+\xi_1))\big)\sin(\bk k \xi_1)e^{-\bk k^2\gamma} = \mathcal O(\bk k^{-1}\lk k)
\] 
is of lower order compared to 
\[
-\sincs(\lk kz) + e^{-\bk k^2\gamma}\cos(\bk k\xi_1)\sincs(\lk k(z+\xi_1)).
\]
Hence, we have that 
\begin{align*}
\Re\Big\{\mathcal F_k\left(\mathcal C^1\bm\chi_\mathcal S\right)&(z+\Delta_1)e^{i \bar k z}\Big\} = \frac{1}{8\pi^2\sqrt{2\pi}}\frac{\lk k \bk k^2 r_1^\dagger}{\rho \psi_0}\Bigg(\sincs(\lk kz) - e^{-\bk k^2\gamma}\cos(\bk k\xi_1)\sincs(\lk k(z+\xi_1)) \\
&+ \mathcal O(\lk k\sqrt\gamma)\left(1+\frac{a\bk k}{\psi_0}\right) + \mathcal O\left(\frac{1}{\lk k\Delta_1}\right) +\mathcal O\left(\frac{a\bk k}{\psi_0}\bk k^{-1} \lk k\right) + \mathcal O(\bk k^{-1}\lk k)\Bigg) 
\end{align*}
Then, in a sufficiently small neighbourhood of $z=0,$ the sign of the dominant term is determined by the sign of the first sinc-function, meaning that
\begin{equation*}
 \sign\left(\sincs\left(\lk k z\right) - e^{-\bar{k}^2\gamma} \cos(\bk k \xi_1)\sincs(\lk k(z +\xi_1))\right) = \sign\big(\sincs\left(\lk k z\right)\big) = 1.
\end{equation*}
Finally, due to the assumption that $\rho,\, \psi_0,\, \lk k,\, \bk k >0,$ together with \autoref{eq:real_part}, we can conclude that within a sufficiently small domain around $z = 0$ 
\[
\sign(r_1^\dagger) =  \sign\left(\Re\left\{\mathcal F_k\left(\mathcal C^1 \bm\chi_\mathcal S\right)(z+\Delta_1) e^{i\bar k z}  \right\}\right).
\] 
\quad
\end{proof}
In the best case possible, where the measurement data (on basis of a plane wave model) is known for all wavenumbers $k\in\R,$ the position $\Delta_1$ is perfectly determined by the duality of the Fourier transform
\[
\mathcal C^1(k) = r_1^\dagger \cos(k \Delta_1)\quad \longleftrightarrow\quad \mathcal F_{k}(\mathcal C^1)(z)\chi_{\R_+}(z) = \frac{r_1^\dagger}{2\sqrt{2\pi}} \delta(z-\Delta_1). 
\] 
However, for the Gaussian model, see \autoref{eq:dom_terms}, the situation is different. In order to determine $\Delta_1,$ the position of the peak, from the measurement, the absolute value of \autoref{eq:dom_terms} is used. There, the sinc-term centered around $z = \Delta_1 - \xi_1$ overlaps (and interacts) with the one centered around $z=\Delta_1$ and therefore eventually shifts the position of the global maximum slightly to $\Delta_1 + \mu_0$ for a $\mu_0\ll 1.$ Even for a small change, say $\bar k \mu_0 > \pi/2,$ we then get from \autoref{eq:real_part} 
\[
\sign\left(\Re\left\{\mathcal F_k\left(\mathcal C^1 \bm\chi_\mathcal S\right)(z) e^{i\bar k (z-(\Delta_1+\mu_0))}  \right\} \right)= \sign\left(\Re\left\{ \sincs(\lk k (z-\Delta_1))e^{-i\bk k\mu_0} \right\}\right) = -\sign\left( \sincs(\lk k (z-\Delta_1)) \right). 
\]
An increasing bandwidth $2\lk k$ of wavenumbers, which yields that 
\[
\lk k\sincs (\lk k z) \longrightarrow \delta(z), \quad \lk k \to \infty,
\]
increases the precision which is needed for determining $\Delta_1.$ However, for an actual OCT setup, the precision is limited and provides only non-sufficient reconstructions.    

\section{Behavior of the Functional with Respect to the Width}
\label{sec:width}

In the previous sections, we showed that in every step the minimization problem \eqref{eq:min_problem} for the functional $\mathcal J^*_j$ in \autoref{eq:approx_functional} attains a unique solution. Hereby, we have assumed that the determination of the width $d_{j-1}$ (in step $j$) can be carried out independently of $n$ and therefore can be assumed to be reconstructed before the $j$th step, $j>1$. 

In this section we want to justify our argument, that this reconstruction may be separated, by presenting the behavior of the corresponding functionals for different values with respect to $d.$ That is, we want to argue that the functional $\mathcal J_j^{(*)}(n,d)$ in the $j$th step (almost) independently of the refractive index $n$ yields a unique minimum with respect to $d.$ 

We have seen that the actual data is nicely predicted by the forward model showing also the sinc-function structure. To this end let us assume that the data $y$ be given and in the range of the forward model, meaning that for $j$ and $m$ let 
\[
y_{j,m} = \left|\Lambda_{j,m} + \mathcal F_k\left(\mathcal I^{*}_{j}[\,\tilde n_j,\tilde d_{j-1}\,]\bm\chi_\mathcal S\right)(z_{j,m})\right|^2,
\] 
for some $(\tilde n_j,\tilde d_{j-1})\in\mathcal A_j$ and let $\tilde r_j = r_j^\dagger(\tilde n_j)$ denote the corresponding directional independent reflection coefficient. Clearly, we obtain the best match, meaning that the according minimization functional is zero, if the amplitude, which is determined by the refractive index $n$, and the width $d$ satisfy $(n,d) = (\tilde n_{j},\tilde d_{j-1}).$ 

In the actual situation the refractive index in the $j$th step is reconstructed after the width of the layer is determined. This means that the reflection coefficient corresponding to the forward model $\mathcal I_j^{*}$ in \autoref{eq:approx_functional}, which further determines the absolute value after Fourier transform, is certainly not matched. We want to show that even though the amplitude is not matched perfectly, the functional (and the corresponding minimization problem with respect to $d$) still yields a unique minimum at the correct position. 

We cannot explicitly calculate the extremal values of the functional $\mathcal J^{(*)}_j.$ Instead, we first provide, in \autoref{fig:variable_d}, for a better understanding a comparison between the simulated data $y,$ for a two layer model, and the forward model. This is shown for different values of $d$ and $r^\dagger_2,$ and the $L^2$-error between the data and the model is calculated, see \autoref{tab:values_rd}. 

\begin{table}
\centering
\begin{tabular}{ |c|c|c|c| } 
\hline
$r^\dagger_2$ & $d$ [mm] & $L^2$-error & color\\
\hline
 \hline
 0.0658 & 0.17 & 0 & blue\\
 0.05 & 0.17 & 17.75 & red\\ 
 0.05 & 0.22 & 135.69 & black\\ 
 0.1 & 0.15 & 433.53 & brown\\ 
 \hline
\end{tabular}
\caption{The different values used for the plot in \autoref{fig:variable_d} and the calculated $L^2$-error. The first line in the table showing the actual values used for the simulated of $y.$}
\label{tab:values_rd}
\end{table}      
The influence of $r^\dagger_2,$ especially for a large value, on the $L^2$-error has a more severe impact compared to the cases where the distance in the model is chosen incorrectly. In order to retrieve the width correctly, we have to use a test functional for suitably chosen refractive index, since the correct is not determined at this stage. In order to minimize the $L^2$-error, the test model with respect to $r^\dagger_2$ (and $n$) should satisfy $|r^\dagger_2| < |\tilde r_2|,$ which guarantees that the second peak of the model is below the one of the data.     
\begin{figure}[htb!]
\centering
\includegraphics[scale = 0.5]{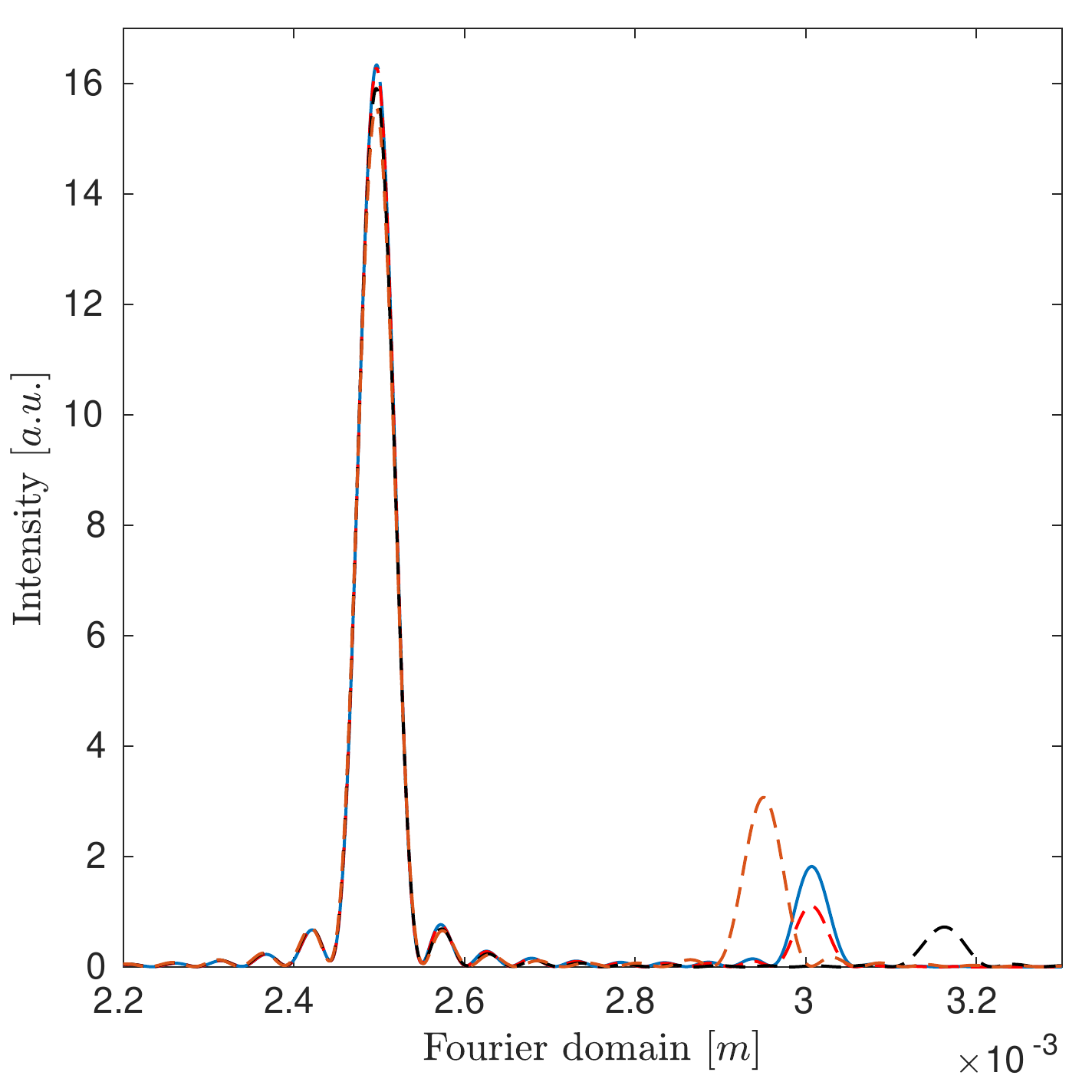}
\vspace{-1em}
\caption{The simulated data $y$ for two reflections in blue is compared with the model for different values of $d$ and $r^\dagger_2$: for a relatively large value $r^\dagger_2$ and small distance $d$ (brown), the actual distance $d$ and small $r^\dagger_2$ (red) and where both values are off (black). }
\label{fig:variable_d}
\end{figure} 

In \autoref{fig:vary_M}, we show the (approximated) functional for the same two layer medium on a grid of values $d.$ From left to right, the number of grid points $M_2$ for the integration used in the functional in \autoref{eq:approx_functional} increases. In order to point out the independence of the existence of a (global) minimum on the refractive index, we plotted for each $M_{2,i},\, i=1,2,3,$ the functional for different values of $r_2^\dagger,$ satisfying $|r^\dagger_2| \leq |\tilde r_2|,$ where $|\tilde r_2|$ is the upper bound for the peak height in the data.  

Clearly, for the actual value $r_2^\dagger = \tilde r_2,$ the functional is zero for all $M_{2,i}.$ By increasing the number of grid points we additionally obtain a sharpening effect which pronounces the minimum even better. Further, we see that for $r_2^\dagger$ with the wrong sign and where the simulated peak is ultimately on the level of the sideloops of the sinc-functions, the brown curve in \autoref{fig:vary_M}, the minimum is shifted slightly for larger values of $M_{2,i}.$     
\begin{figure}[htb!]
\centering
\includegraphics[scale = 1.65]{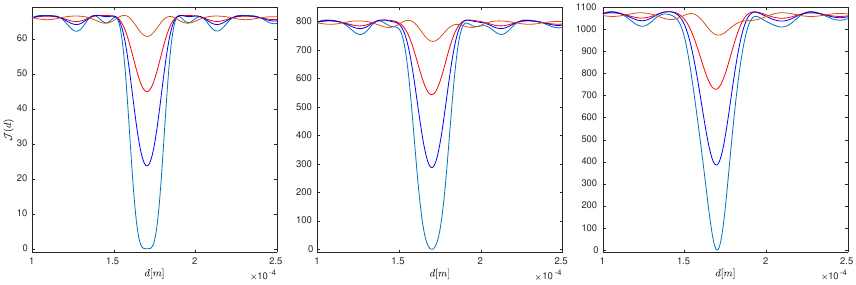}
\vspace{-1em}
\caption{The functional $\mathcal J^*,$ shown on a grid for values of $d,$ is presented for different values of $r^\dagger_2 = \{\tilde r_2, 0.025, 0.04, -0.02\},$ where $\tilde r_2 = 0.0658$ (blue, red, dark blue, brown) and for different numbers $M_2 = \{1, 301, 601\}$ (left to right). }
\label{fig:vary_M}
\end{figure} 
The location of the global minimum of the functional for each $M_{2,i}$ and every $r_{2,l}^\dagger,\, l\in\{1,\dots,4\},$ is presented in the first three lines of \autoref{tab:d_pos}. There, we see that for an intermediate number of grid points, the localization of the minimum position is not shifted for resonable values of $r_2^\dagger.$ 
\begin{table}[hbt!]
\centering
\begin{tabular}{ |c|c|c|c|c|c| } 
\hline
& $r^\dagger_{2,1} = \tilde r_2$ & $r_{2,2}^\dagger$ & $r_{2,3}^\dagger$ & $r^\dagger_{2,4}$& \\
\hline
\hline
$d$ [mm] & 0.17 & 0.1703 & 0.1704 & 0.1705 & $M_{2,1}$\\ 
$d$ [mm] & 0.17 & 0.17 & 0.17 & 0.1707 & $M_{2,2}$\\ 
$d$ [mm] & 0.17 & 0.1691 & 0.1694 & 0.1707 & $M_{2,3}$ \\ 
 \hline
 \hline
$d$ [mm] & 0.1699 & 0.1699 & 0.17 & 0.1706 & $M_{2,1}$\\ 
$d$ [mm] & 0.17 & 0.169 & 0.1694 & 0.1707 & $M_{2,2}$\\ 
\hline
\end{tabular}
\caption{The minimum values of the functional $\mathcal J^*_2$ for different reflection coefficients $r_2^\dagger$ and number of grid points $M_{2,i}.$ The first three lines correspond to the case where the data satisfies the range condition and which are plotted in \autoref{fig:vary_M}. The two bottom lines show the minimas of the functional, where an additional term was added to the data.}
\label{tab:d_pos}
\end{table}
Finally, we consider the case where the data is disturbed 
\[
y_{j,m} = |\Lambda_{j,m} + \tilde r_j \Gamma^*_{j,m} + \Phi|^2, \quad 1\leq m\leq M_j,
\]
for some $\Phi$ representing possible noise or a reflection from deeper inside the sample. We use the forward model 
\[
|\Lambda_{j,m} + r_j^\dagger(n) \Gamma^*_{j,m}|^2 
\]
for the functional $\mathcal J^*_j$ in \autoref{eq:approx_functional}, and we still obtain a unique global minimum. For the simulation of the data, we used for $\Phi$ the forward model (for an additional layer reflection) with $\tilde r_3 = -\tilde r_2$ and distance $d_2 = d_1.$ In \autoref{fig:functional_disturbed}, we plot the functional for different values of $r_2^\dagger$ and $M_{2,i}\in\{150,300\}.$ In \autoref{tab:d_pos} the two bottom rows show the position of the global minimum for the disturbed data. Here, again, an intermediate value of $M_2$ is the most effective one. 
\begin{figure}[htb!]
\centering
\includegraphics[scale = 1.65]{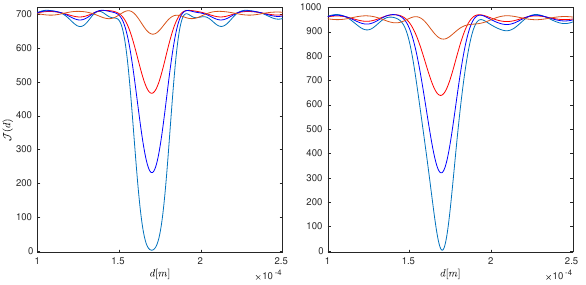}
\vspace{-1em}
\caption{The functional $\mathcal J^*,$ shown on a grid for values of $d,$ is presented for different values of $r^\dagger_2 = \{\tilde r_2, 0.025, 0.04, -0.02\}$ (blue, red, dark blue, brown) and for different numbers $M_2 = \{301, 600\}$ (left to right). }
\label{fig:functional_disturbed}
\end{figure} 

Aside from the global minimum, which we obtain in every case above, there exist also local minima. These occur if the peak overlaps with the sideloops of the data-sinc-function and therefore yield a smaller $L^2$-error. Hence, for the minimization (process) of the functionals, the initial guess is crucial. Otherwise the solution is stuck in the neighbourhood of one of these local minima.

\section{Numerical Experiments}
\label{sec:numerics}

\subsection*{OCT setup and phantom description}
In the following examples we illustrate the applicability of the proposed method for both simulated and experimental data. The OCT experimental data, shown in \autoref{fig:data}, was generated by a swept-source OCT system with a laser light source centered at the wavelength $\lambda_0 = 1300$nm. The bandwidth around the central wavelength ranges from approximatively $1282.86$nm to $1313.76$nm. During a single experiment the object was raster scanned on a grid of $1024 \times 1024$ points in horizontal direction and for each raster scan a spectrum consisting of $1498$ data points, equally spaced with respect to the wavenumber, was recorded. Due to the relatively small bandwidth of the laser, the approximation that $n$ does not dependent on the wave number is valid. For the reconstruction, a single depth profile was chosen from the data set and no averaging has been done. 
Additionally, for the calibration of all necessary system parameters, introduced in \cite{VesKraMinDreElb21}, the object was shifted multiple times along the axial direction, where for each position a full measurement was recorded.

\begin{figure}[htb!]
\centering
\includegraphics[scale = 1.1]{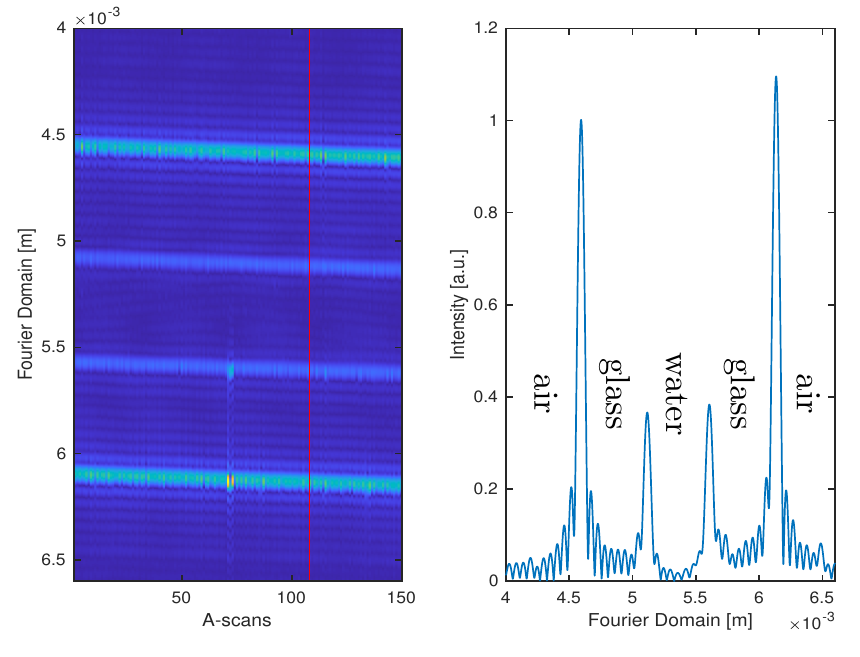}
\vspace{-1em}
\caption{The OCT experimental data for a glass-water-glass sample produced by a $1300nm$ system. A cross-sectional image, a B-scan, (left) showing $150$ depth profiles (A-scans) of the object. A single depth scan (right), which is taken along the red line of the B-scan, shows the single layers of the object. The intensity in the A-scan is normalized such that the first peak has magnitude one. }
\label{fig:data}
\end{figure} 

In order to examine the correctness of the recovered values, the object needs to consist of materials where the ground truths of the refractive indices, on the used spectrum, are available. The sample is a three-layer medium where two coverglasses enclose water. The refractive indices of both materials, at the central wavelength, are given by $n_g \approx 1.5088$ and $n_w \approx 1.3225.$ The thickness of every layer is estimated between $0.14$mm and $0.19$mm. 

The simulated data were generated for the same three-layer sample with refractive indices $n_0 = 1,\,n_1 = 1.5088,\, n_2 = 1.3225,\, n_3 = 1.5088$ and widths $d = [0.174,\,0.186,\,0.173]$mm by using the forward model presented in \autoref{sec:mathmodel}. 
Given the list of system and object parameters described in \autoref{as:three}, the formula for the reflected field \autoref{eq:scatfield} was implemented where the domain of accepted wave directions, see \autoref{eq:accepwavevec}, was replaced by its discretized version. 

In addition we use a sample showing characteristica in lateral directions, which are then visible in the outcome for different raster scans across the object. On a grid of $20\times 20,$ we define the object for every $(l,m)\in\{1,\dots,20\}^2$ as a two layer object with $n(l,m) = [1,\, 1.5088,\, \hat n(l,m)],$ where the distribution $\hat n$ is shown in \autoref{fig:3D_data}. For every grid element, we collected the simulated measurement data. For the reconstruction $5\%$ uniformly distributed noise was added to the data. 

Multiple reflections, due to their minor contribution to the final measurement, have been omitted. 

\subsection*{Inverse problem and minimization}
For the inverse problem, the functionals $\mathcal J_j$ and $\mathcal J^*_j$, based on the direct model, defined in \autoref{eq:min_func} and \autoref{eq:approx_functional} respectively, were implemented. The discretized integral uses $M_j = M = 401$ equally spaced grid points in Fourier domain, which have been chosen symmetrically around every exposed local maximum in the data.
\begin{description}
\item[Simulated vs. Real Data]
In \autoref{sec:inverse_EaU}, we discussed the minimization problem for the data $y_j,\, 1\leq j\leq J+1,$ which so far, has been considered as an element of the range of the forward operator (plus a certain noise level $\delta$). However, experimental data fails to satisfy this relation because of the unknown intensity in the focal plane: \\
In \autoref{as:one} we have set the amplitude of the Gaussian distribution in our forward model equal to one, which further yields that the maximal intensity of the incident field in \autoref{eq:solution}, that is $\big|E^{(0)}\big|^2$ in the focal spot $x_3 = r_0,$ is also one. 
For the experimental data the maximal intensity represents the power of the laser light in the focal spot and is in general an unknown quantity. We consider this quantity as real positive number $ Q_0.$ Even for a single layer reflection, the factor $Q_0$ causes incorrect reconstructions if it is not included in the model. Let the data be given by 
\[
\tilde y = Q_0 y, \quad\text{for}\quad y = |r^\dagger(\tilde n_1) \Gamma^*_{1,m_0}|^2,
\] 
for a single grid point $z_{1,m_0}.$ Then from \autoref{thm:unique_min}, we deduce that
\[
|r^\dagger(n) \Gamma^*_{1,m_0}|^2 = Q_0 |r^\dagger(\tilde n_1) \Gamma^*_{1,m_0}|^2,
\]         
which then yields $|r^\dagger(n)| = \sqrt{Q_0} |r^\dagger(\tilde n_1)|.$ 

We correct the mismatch between the model and the experimental data by the quantifying $Q_0$ from the first air-glass reflection. Hereby, we use the coverglass plate as a calibrational layer. We recover instead of the refractive index $n_1,$ which we assume in this case to be determined perfectly before, the quantity $Q_0$ as the quotient between the data and the forward model for $j=1$ in \autoref{eq:min_func}.

\item[Minimization in Two Steps:] In \autoref{sec:layer} (which is carried out in \autoref{sec:inverse_EaU}) we proposed a layer-by-layer method where in each step a single pair of parameters $(n_j,d_{j-1}),\, 1\leq j\leq J+1,$ is recovered. Each step itself is then again split into two steps. After the minimum of the functional $\mathcal J^*_j$ is determined, the output is used as an input in form of an initial guess for the actual minimization problem corresponding to the functional $\mathcal J_j.$ In \autoref{fig:comp_sim} and \autoref{fig:comp_real} a comparison of the two functionals (for the minimum value of $d$) for every reconstruction step is provided for simulated and experimental data, respectively. 
\begin{figure}
\centering
\includegraphics[scale = 0.7]{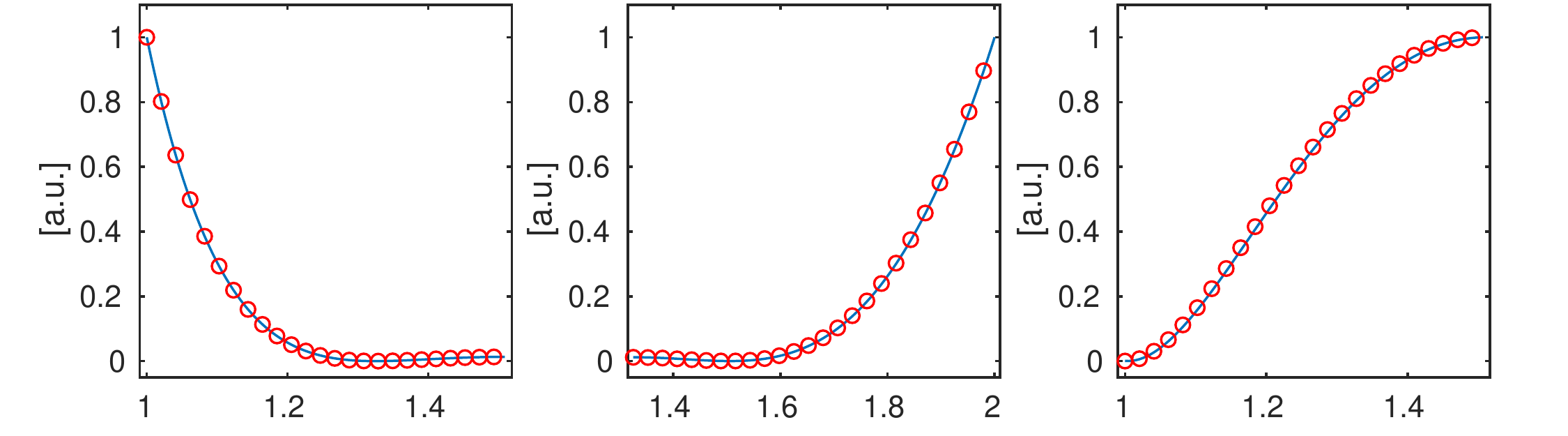}
\caption{Comparison between the functionals $\mathcal J_j$(blue line) and $\mathcal J^*_j$(red circles). The $x$-axis shows possible values of the refractive index $n_j$ in each step $j=1$ to $j=3$ for the reconstruction for simulated data. The order is from left to right. }
\label{fig:comp_sim}
\end{figure}

\begin{figure}[htb!]
\centering
\includegraphics[scale = 0.7]{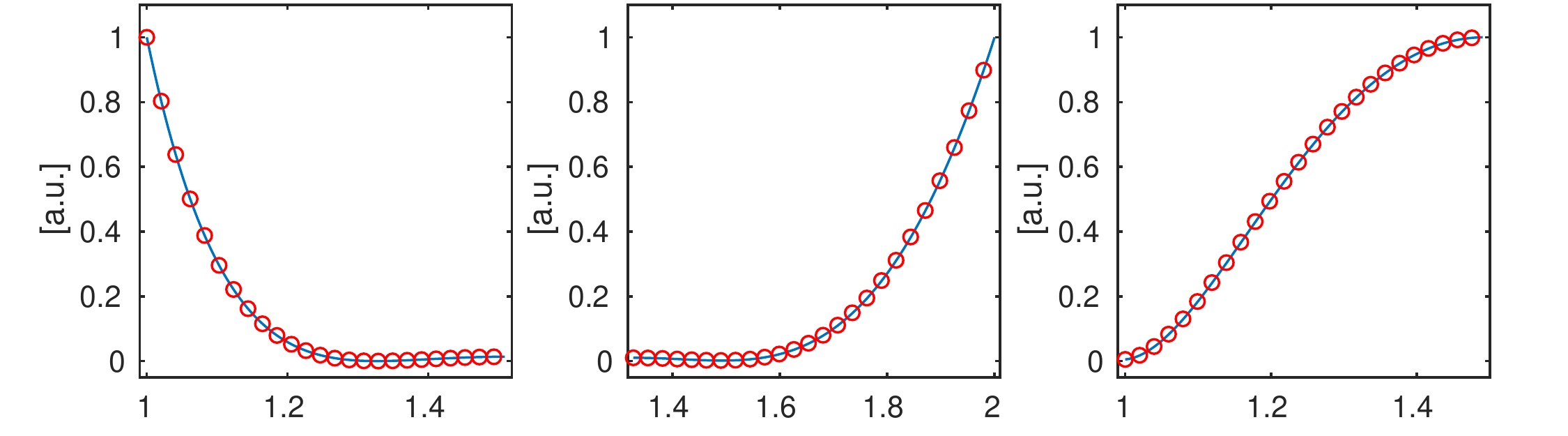}
\caption{Comparison between the functionals $\mathcal J_j$(blue line) and $\mathcal J^*_j$(red circles). The $x$-axis shows possible values of the refractive index $n_j$ in each step $j=1$ to $j=3$ for the reconstruction for experimental data. The order is from left to right.}
\label{fig:comp_real}
\end{figure}  
\end{description}

\subsection*{Results}
The outcome of both minimization problems, for simulated and experimental data, are presented in this section. In \autoref{fig:func_sim} we plot the functionals $\mathcal J_j,\,j=1,2,3,$ on a $(n,d)$-grid on the half of the admissible values $\mathcal A_j$ where the global minimum is located. 
\begin{figure}[htb!]
\centering
\includegraphics[scale = 1.5]{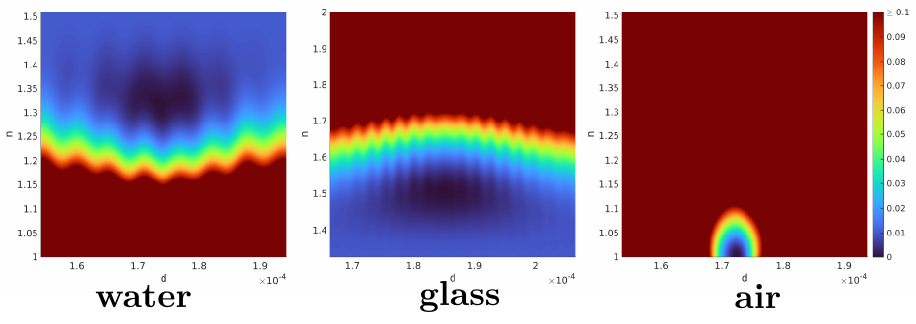}
\caption{The functionals $\mathcal J_j$ (for simulated data) for each reconstruction step evaluated on a grid of $150\times 100$ values for the refractive index $n$ and the width $d.$ }
\label{fig:func_sim}
\end{figure}  

For the experimental data, the functionals evaluated over a $(n,d)$-grid are shown in \autoref{fig:func_real}. Additionally, a contour plot of the close neighbourhoods of each pair of minima are shown in \autoref{fig:contour}. The reconstructed values are plotted in \autoref{fig:results}. The laterally reconstructed refractive index distribution is shown in \autoref{fig:3D_data}.

\begin{figure}
\centering
\includegraphics[scale = 1.5]{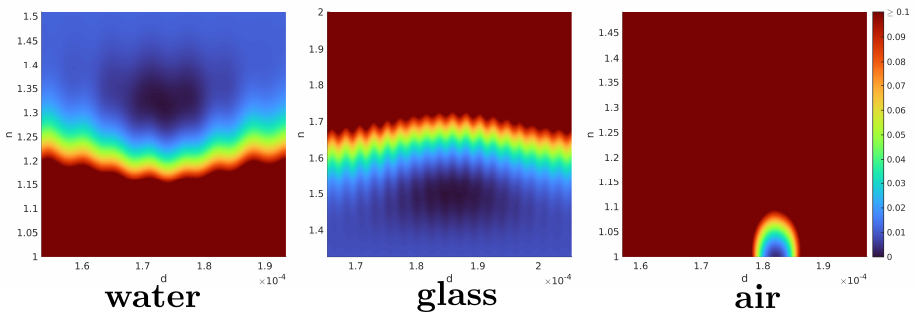}
\caption{The functionals $\mathcal J_j$ (for experimental data) for each reconstruction step evaluated on a grid of $150\times 100$ values for the refractive index $n$ and the width $d.$ }
\label{fig:func_real}
\end{figure} 
 
\begin{figure}
\centering
\includegraphics[scale = 0.7]{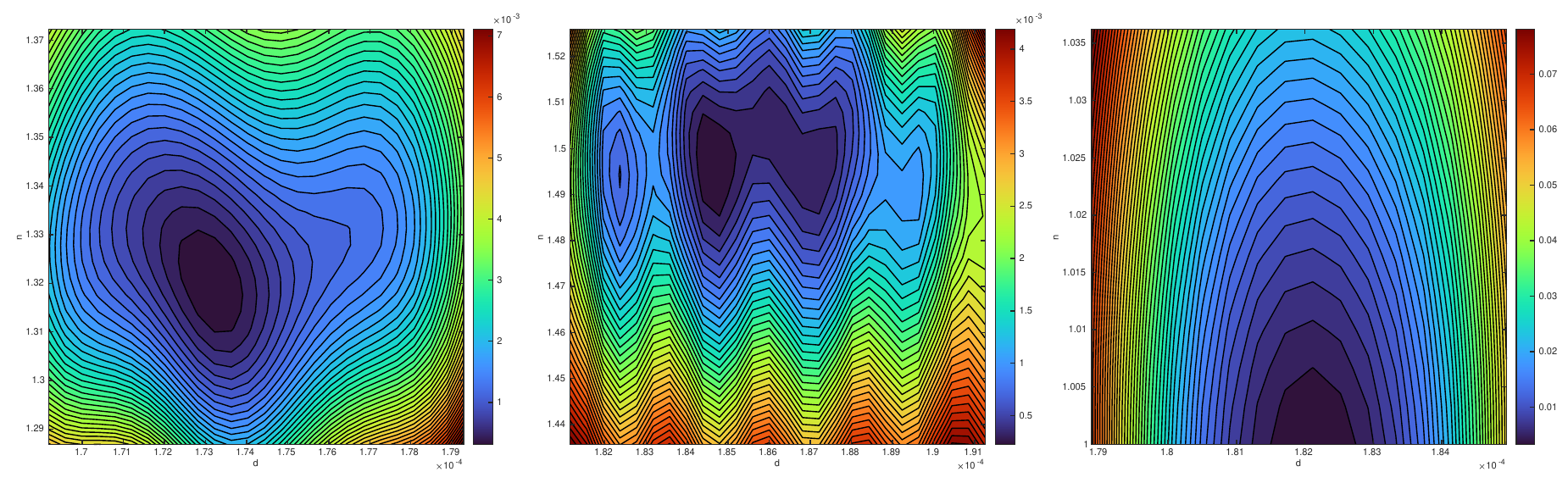}
\caption{A contour plot of the functionals $\mathcal J_j$ (for experimental data) for each reconstruction step evaluated within a close neighbourhood of each pair of minima $(n,d).$ }
\label{fig:contour}
\end{figure}

\begin{figure}[hbt!]
\centering
\includegraphics[scale = 0.7]{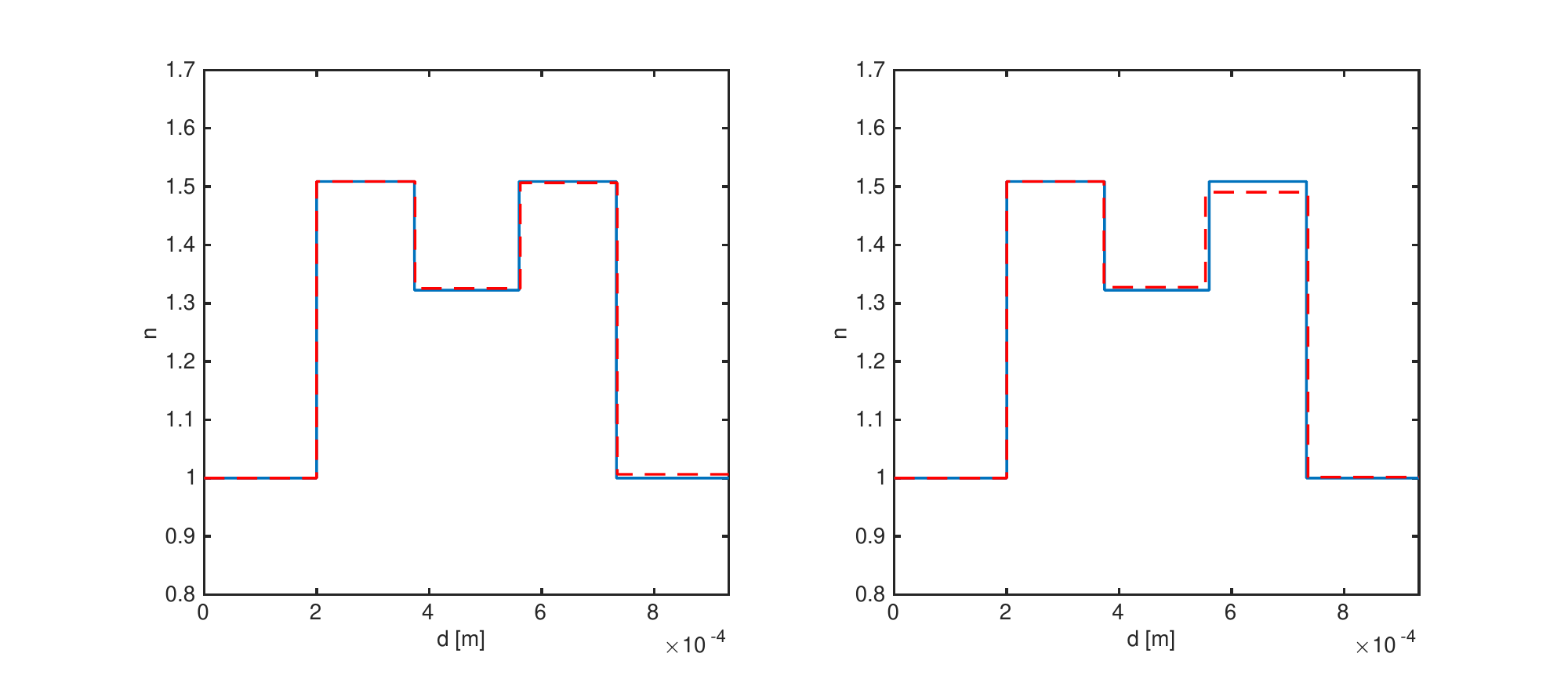}
\caption{A comparison of the reconstructed values (red) with the ground truth (blue). The reconstruction for the simulated data without noise (left) and the experimental data (right) match nicely the ground truth.}
\label{fig:results}
\end{figure}

\begin{figure}
\centering
\includegraphics[scale = 0.85]{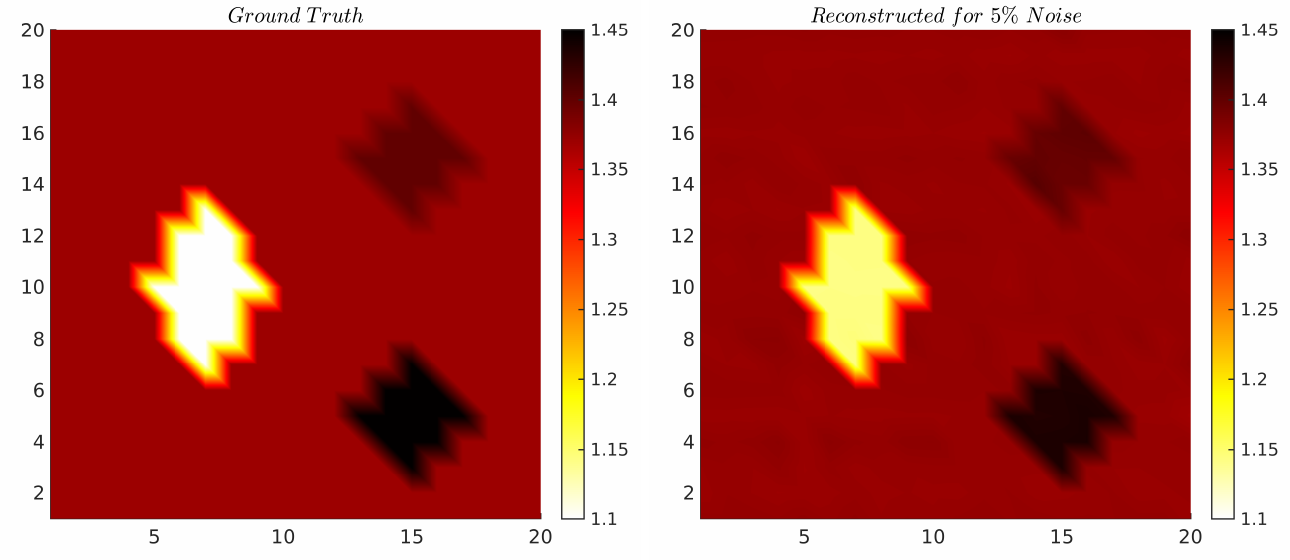}
\caption{The ground truth refractive index distribution $\hat n$ on a $20\times 20$ grid, showing the discrete versions of two circles $\hat n_{C_1} = 1.45,\, \hat n_{C_2} = 1.4$ and an ellipse $\hat n_{E} = 1.1$ surrounded by a homogeneous medium with refractive index $\hat n_B = 1.37$ (left). The reconstructed distribution from simulated data with $5\%$ (random) noise (right). }
\label{fig:3D_data}
\end{figure}

\newpage
\section*{Conclusion}
The inverse problem in quantitative optical coherence tomography for layered media has been discussed. Based on a Gaussian beam forward model a discrete least squares minimization problem for the reconstruction of the refractive index and the thickness of each layer is formulated. Existence and uniqueness of solutions for the minimization problem are discussed. The method is validated by numerical examples considering the refractive index reconstruction from a three-layer object from both simulated and experimental data.

\section*{Acknowledgement}
This work was made possible by the greatly appreciated support of the Austrian Science Fund (FWF) via the special research programme SFB F68 ``Tomography Across the Scales'':
Peter Elbau and Leopold Veselka have been supported via the subproject F6804-N36 ``Quantitative Coupled Physics Imaging'', Lisa Krainz and Wolfgang Drexler have been supported via the subproject F6803-N36 ``Multi-Modal Imaging''.

\section*{References}
\printbibliography[heading=none]

@ARTICLE{Bau17,
  AUTHOR = {Baumann, B.},
  DATE = {2017},
  DOI = {10.3390/app7050474},
  JOURNALTITLE = {Applied Sciences},
  NUMBER = {5},
  PAGES = {474},
  SHORTJOURNAL = {App.Sciences},
  TITLE = {Polarization Sensitive Optical Coherence Tomography: A Review of Technology and Applications},
  VOLUME = {7},
}

@ARTICLE{BoeMilGemNel97,
  AUTHOR = {de Boer, J. F. and Milner, T. E. and Gemert, M. J. C. and Nelson, J. S.},
  DATE = {1997},
  JOURNALTITLE = {Optics Letters},
  PAGES = {934--936},
  SHORTJOURNAL = {Opt. Letters},
  TITLE = {Two-dimensional birefringence imaging in biological tissue by polarization-sensitive optical coherence tomography},
  VOLUME = {22},
}

@ARTICLE{BruCha05,
  AUTHOR = {Bruno, O. and Chaubell, J.},
  DATE = {2005},
  ISSN = {0266-5611},
  JOURNALTITLE = {Inverse Problems},
  PAGES = {499--524},
  SHORTJOURNAL = {Inverse Probl.},
  TITLE = {One-dimensional inverse scattering problem for optical coherence tomography},
  VOLUME = {21},
}

@BOOK{ColKre98,
  AUTHOR = {Colton, D. and Kress, R.},
  LOCATION = {Berlin},
  PUBLISHER = {Springer-Verlag},
  URL = {https://www.springer.com/gp/book/9783662035375},
  DATE = {1998},
  EDITION = {2},
  ISBN = {3-540-62838-X},
  PAGETOTAL = {xii+334},
  SERIES = {Applied Mathematical Sciences},
  TITLE = {Inverse acoustic and electromagnetic scattering theory},
  VOLUME = {93},
}

@BOOK{DreFuj08,
  AUTHOR = {Drexler, W. and Fujimoto, J. G.},
  LOCATION = {Berlin, Heidelberg},
  PUBLISHER = {Springer},
  DATE = {2008},
  TITLE = {Optical Coherence Tomography},
}

@INCOLLECTION{ElbMinSch15,
  AUTHOR = {Elbau, P. and Mindrinos, L. and Scherzer, O.},
  EDITOR = {Scherzer, O.},
  PUBLISHER = {Springer New York},
  URL = {http://link.springer.com/referenceworkentry/10.1007/978-1-4939-0790-8_44},
  BOOKTITLE = {Handbook of Mathematical Methods in Imaging},
  DATE = {2015},
  DOI = {10.1007/978-1-4939-0790-8_44},
  KEYWORDS = {FSPS105},
  PAGES = {1169--1204},
  TITLE = {Mathematical Methods of Optical Coherence Tomography},
}

@ARTICLE{ElbMinVes23,
  AUTHOR = {Elbau, P. and Mindrinos, L. and Veselka, L.},
  PUBLISHER = {AIMS Press},
  DATE = {2023},
  DOI = {10.3934/math.2023130},
  FILE = {https://www.aimspress.com/article/doi/10.3934/math.2023130},
  ISSN = {2473-6988},
  ISSUETITLE = {Mathematical modeling and optimal control of complex systems, complex networks and multi-scale models},
  JOURNALTITLE = {AIMS Mathematics},
  KEYWORDS = {F6804},
  NUMBER = {2},
  PAGES = {2508--2531},
  SHORTJOURNAL = {AIMS Mathematics},
  TITLE = {Development of mathematical models for quantitative OCT: A review},
  VOLUME = {8},
}

@INCOLLECTION{ElbMinVes21,
  AUTHOR = {Elbau, P. and Mindrinos, L. and Veselka, L.},
  EDITOR = {Kaltenbacher, B. and Schuster, T. and Wald, A.},
  PUBLISHER = {Springer, Cham},
  BOOKTITLE = {Time-dependent Problems in Imaging and Parameter Identification},
  DATE = {2021},
  DOI = {https://doi.org/10.1007/978-3-030-57784-1_8},
  KEYWORDS = {F6804},
  NOTE = {Green-OA},
  PAGES = {229--266},
  TITLE = {Quantitative OCT reconstructions for dispersive media},
}

@INPROCEEDINGS{ElbMinVes20,
  AUTHOR = {Elbau, P. and Mindrinos, L. and Veselka, L.},
  EDITOR = {Beilina, L. and Bergounioux, M. and Christofol, M. and Da Silva, A. and Litman, A.},
  PUBLISHER = {Springer},
  BOOKTITLE = {Mathematical and Numerical Approaches for Multi-Wave Inverse Problems},
  DATE = {2020},
  DOI = {10.1007/978-3-030-48634-1_8},
  KEYWORDS = {F6804,F6801},
  NOTE = {Green-OA},
  NUMBER = {328},
  PAGES = {105--126},
  SERIES = {Springer Proceedings in Mathematics \& Statistics},
  TITLE = {Reconstructing the Optical Parameters of a Layered Medium with Optical Coherence Elastography},
}

@ARTICLE{Fer96,
  AUTHOR = {Fercher, A. F.},
  PUBLISHER = {SPIE},
  DATE = {1996},
  ISSN = {1083-3668},
  JOURNALTITLE = {Journal of Biomedical Optics},
  NUMBER = {2},
  PAGES = {157--173},
  SHORTJOURNAL = {J. Biomed. Opt.},
  TITLE = {Optical coherence tomography},
  VOLUME = {1},
}

@ARTICLE{FerMenWer88,
  AUTHOR = {Fercher, A. F. and Mengedoht, K. and Werner, W.},
  DATE = {1988},
  DOI = {10.1364/ol.13.000186},
  JOURNALTITLE = {Optics Letters},
  NUMBER = {3},
  PAGES = {186},
  SHORTJOURNAL = {Opt. Letters},
  TITLE = {Eye-length measurement by interferometry with partially coherent light},
  VOLUME = {13},
}

@ARTICLE{HeeHuaSwaFuj92,
  AUTHOR = {Hee, M. R. and Huang, D. and Swanson, E. A. and Fujimoto, J. G.},
  DATE = {1992},
  JOURNALTITLE = {Journal of the Optical Society of America B},
  NUMBER = {6},
  PAGES = {903--908},
  SHORTJOURNAL = {J. Opt. Soc. Amer. B},
  TITLE = {Polarization-sensitive low-coherence reflectometer for birefringence characterization and ranging},
  VOLUME = {9},
}

@ARTICLE{HeeIzaSwaHuaSchu95,
  AUTHOR = {Hee, M. R. and Izatt, J. A. and Swanson, E. A. and Huang, D. and Schuman, J. S. and Lin, C. P. and Puliafito, C. A. and Fujimoto, J. G.},
  DATE = {1995},
  DOI = {10.1001/archopht.1995.01100030081025},
  ISSN = {2168-6165},
  JOURNALTITLE = {Archives of Ophthalmology},
  NUMBER = {3},
  PAGES = {325--332},
  SHORTJOURNAL = {Arch Ophthalmol.},
  TITLE = {Optical Coherence Tomography of the Human Retina},
  VOLUME = {113},
}

@BOOK{Hoe03,
  AUTHOR = {Hörmander, L.},
  LOCATION = {New York},
  PUBLISHER = {Springer Verlag},
  DATE = {2003},
  EDITION = {2},
  TITLE = {The Analysis of Linear Partial Differential Operators I},
}

@ARTICLE{HuaSwaLinSchuStiCha91,
  AUTHOR = {Huang, D. and Swanson, E. A. and Lin, C. P. and Schuman, J. S. and Stinson, G. and Chang, W. and Hee, M. R. and Flotte, T. and Gregory, K. and Puliafito, C. A. and Fujimoto, J. G.},
  DATE = {1991},
  ISSN = {0036-8075},
  JOURNALTITLE = {Science},
  NUMBER = {5035},
  PAGES = {1178--1181},
  SHORTJOURNAL = {Science},
  TITLE = {Optical coherence tomography},
  VOLUME = {254},
}

@BOOK{Jac98,
  AUTHOR = {Jackson, J. D.},
  PUBLISHER = {Wiley},
  DATE = {1998},
  EDITION = {3},
  TITLE = {Classical Electrodynamics},
}

@ARTICLE{KenKenSam14,
  AUTHOR = {Kennedy, B. F. and Kennedy, K. M. and Sampson, D. D.},
  PUBLISHER = {IEEE},
  DATE = {2014},
  DOI = {10.1109/jstqe.2013.2291445},
  ISSN = {1077-260X},
  JOURNALTITLE = {{IEEE} Journal of Selected Topics in Quantum Electronics},
  NUMBER = {2},
  PAGES = {272--288},
  SHORTJOURNAL = {{IEEE} J. Sel. Topics Quantum Electron.},
  TITLE = {A Review of Optical Coherence Elastography: Fundamentals, Techniques and Prospects},
  VOLUME = {20},
}

@ARTICLE{KhaMehKhaIqbRas17,
  AUTHOR = {Khan, H. A. and Mehmood, A. and Khan, Q. A. and Iqbal, F. and Rasheed, F. and Khan, N. and Pizzimenti, J. J.},
  DATE = {2017},
  DOI = {10.1080/17469899.2017.1356229},
  JOURNALTITLE = {Expert Review of Ophthalmology},
  NUMBER = {5},
  PAGES = {373--385},
  SHORTJOURNAL = {Expert Rev. Ophthalmol.},
  TITLE = {A major review of optical coherence tomography angiography},
  VOLUME = {12},
}

@ARTICLE{LiuDre19,
  AUTHOR = {Liu, M. and Drexler, W.},
  DATE = {2019},
  DOI = {10.1039/c8pp00471d},
  JOURNALTITLE = {Photochemical {\&} Photobiological Sciences},
  NUMBER = {5},
  PAGES = {945--962},
  SHORTJOURNAL = {Photochem. Photobiol. Sci.},
  TITLE = {Optical coherence tomography angiography and photoacoustic imaging in dermatology},
  VOLUME = {18},
}

@ARTICLE{MacArrMun23,
  AUTHOR = {Macdonald, C. M. and Arridge, S. R. and Munro, P. R. T.},
  DATE = {2023},
  DOI = {10.1038/s41598-023-28366-w},
  JOURNALTITLE = {Scientific Reports},
  NUMBER = {1},
  PAGES = {11},
  SHORTJOURNAL = {Sci. Rep.},
  TITLE = {On the inverse problem in optical coherence tomography},
  VOLUME = {13},
}

@BOOK{ManWol95,
  AUTHOR = {Mandel, L. and Wolf, E.},
  LOCATION = {Cambridge, England},
  PUBLISHER = {Cambridge University Press},
  DATE = {1995},
  TITLE = {Optical coherence and quantum optics},
}

@ARTICLE{MarRalBopCar07,
  AUTHOR = {Marks, D. L. and Ralston, T. S. and Boppart, S. A. and Carney, P. S.},
  DATE = {2007},
  ISSN = {0764-583X},
  JOURNALTITLE = {Journal of the Optical Society of America A},
  NUMBER = {4},
  PAGES = {1034--1041},
  SHORTJOURNAL = {J. Opt. Soc. Amer. A},
  TITLE = {Inverse scattering for frequency-scanned full-field optical coherence tomography},
  VOLUME = {24},
}

@ARTICLE{RalMarCarBop06,
  AUTHOR = {Ralston, T. S. and Marks, D. L. and Carney, P. S. and Boppart, S. A.},
  DATE = {2006},
  ISSN = {0764-583X},
  JOURNALTITLE = {Journal of the Optical Society of America A},
  NUMBER = {5},
  PAGES = {1027--1037},
  SHORTJOURNAL = {J. Opt. Soc. Amer. A},
  TITLE = {Inverse scattering for optical coherence tomography},
  VOLUME = {23},
}

@ARTICLE{Som94,
  AUTHOR = {Somersalo, E.},
  PUBLISHER = {{IOP} Publishing},
  DATE = {1994},
  DOI = {10.1088/0266-5611/10/2/017},
  ISSN = {0266-5611},
  JOURNALTITLE = {Inverse Problems},
  NUMBER = {2},
  PAGES = {449--466},
  SHORTJOURNAL = {Inverse Probl.},
  TITLE = {Layer stripping for time-harmonic Maxwell's equations with high frequency},
  VOLUME = {10},
}

@ARTICLE{SylWinGyl96,
  AUTHOR = {Sylvester, J. and Winebrenner, D. and Gylys-Colwell, F.},
  DATE = {1996},
  ISSN = {0036-1399},
  JOURNALTITLE = {{SIAM} Journal on Applied Mathematics},
  NUMBER = {3},
  PAGES = {736--754},
  SHORTJOURNAL = {{SIAM} J. Appl. Math.},
  TITLE = {Layer stripping for the Helmholtz equation},
  VOLUME = {56},
}

@ARTICLE{TomWan06,
  AUTHOR = {Tomlins, P. H. and Wang, R. K.},
  DATE = {2006},
  JOURNALTITLE = {Journal of the Optical Society of America A, Optics, image science, and vision},
  NUMBER = {8},
  PAGES = {1897--1907},
  SHORTJOURNAL = {J Opt Soc Am A Opt Image Sci Vis.},
  TITLE = {Matrix approach to quantitative refractive index analysis by Fourier domain optical coherence tomography},
  VOLUME = {23},
}

@ARTICLE{VesKraMinDreElb21,
  AUTHOR = {Veselka, L. and Krainz, L. and Mindrinos, L. and Drexler, W. and Elbau, P.},
  URL = {https://www.mdpi.com/1424-8220/21/20/6864},
  DATE = {2021},
  DOI = {10.3390/s21206864},
  JOURNALTITLE = {Sensors},
  KEYWORDS = {F6803,F6804},
  NOTE = {Hybrid-OA},
  NUMBER = {20},
  PAGES = {6864},
  SHORTJOURNAL = {Sens.},
  TITLE = {A Quantitative Model for Optical Coherence Tomography},
  VOLUME = {21},
}

\end{document}